\theoremstyle{plain}
\newtheorem{theorem}{Theorem}[section]
\newtheorem{lemma}[theorem]{Lemma}
\newtheorem{prop}[theorem]{Proposition}
\newtheorem{corollary}[theorem]{Corollary}
\theoremstyle{definition}
\newtheorem{definition}[theorem]{Definition}
\theoremstyle{remark}
\newtheorem{remark}[theorem]{Remark}
\numberwithin{equation}{section}
\newcommand{\eps}{\varepsilon}
\newcommand{\R}{\mathbb R}
\newcommand{\N}{\mathbb N}
\newcommand{\T}{\mathbb T}
\newcommand{\jump}{\medskip}
\DeclareMathOperator{\diver}{div}
\DeclareMathOperator{\loglip}{Lip_{log}}
\DeclareMathOperator{\Lip}{Lip}
\title[Full double H\"older regularity of the pressure]{Full double H\"older regularity of the 
 pressure \\ in bounded domains}
\author[L.~De Rosa]{Luigi De Rosa}
\address[L.~De Rosa]{Department Mathematik und Informatik, Universität Basel, Spiegelgasse~1, 4051 Basel, Switzerland}
\email{luigi.derosa@unibas.ch}
\author[M.~Latocca]{Micka\"el Latocca}
\address[M.~Latocca]{Department of Mathematics, University of Maryland, College Park, MD 20742, USA }
\email{mickael.latocca@umd.edu}
\author[G.~Stefani]{Giorgio Stefani}
\address[G.~Stefani]{Scuola Internazionale Superiore di Studi Avanzati (SISSA), via Bonomea~265, 34136 Trieste (TS), Italy}
\email{gstefani@sissa.it {\normalfont\it or} giorgio.stefani.math@gmail.com}
\subjclass[2010]{Primary 76B03.  Secondary 35D30, 35J15, 35J25.}
\date{\today}
\keywords{Incompressible fluids, hydrodynamic pressure, boundary Schauder's estimates, Littlewood--Paley analysis.}
\thanks{
\textit{Acknowledgements}.
The first-named author has been partially funded by the SNF grant FLUTURA: Fluids, Turbulence, Advection No. 212573,  and he is indebted to Jaemin Park for fruitful conversations about weak Euler solutions on bounded domains. The second-named author thanks Nicolas Burq for an enlightening discussion leading to the choice of geodesic normal coordinates. 
The third-named author is member of the Istituto Nazionale di Alta Matematica (INdAM), Gruppo Nazionale per l'Analisi Matematica, la Probabilità e le loro Applicazioni (GNAMPA), is partially supported by the INdAM--GNAMPA 2022 Project \textit{Analisi geometrica in strutture subriemanniane}, codice CUP\_E55\-F22\-00\-02\-70\-001,  by the INdAM--GNAMPA 2023 Project \textit{Problemi variazionali per funzionali e operatori non-locali}, codice CUP\_E53\-C22\-00\-19\-30\-001, and has received funding from the European Research Council (ERC) under the European Union’s Horizon 2020 research and innovation program (grant agreement No.~945655).
}
\begin{document}

\begin{abstract}
 We consider H\"older continuous weak solutions $u\in C^\gamma(\Omega)$, $u\cdot n|_{\partial \Omega}=0$, of the incompressible Euler equations on a bounded and simply connected domain $\Omega\subset\R^d$. If $\Omega$ is of class $C^{2,1}$ then the corresponding pressure satisfies $p\in C^{2\gamma}_*(\Omega)$ in the case $\gamma\in (0,\frac{1}{2}]$, where $C^{2\gamma}_*$ is the H\"older--Zygmund space, which coincides with the usual H\"older space for $\gamma<\frac12$. This result, together with our previous one in \cite{DLS22} covering the case $\gamma\in(\frac12,1)$, yields the full double regularity of the pressure on bounded and sufficiently regular domains. The interior regularity comes from the corresponding $C^{2\gamma}_*$ estimate for the pressure on the whole space $\R^d$, which in particular extends and improves the known double regularity results (in the absence of a boundary) in the borderline case $\gamma=\frac{1}{2}$. The boundary regularity features the use of local normal geodesic coordinates, pseudodifferential calculus and a fine Littlewood--Paley analysis of the modified equation in the new coordinate system.  We also discuss the relation between different notions of weak solutions,  a step which plays a major role in our approach. 
\end{abstract}

\maketitle

\section{Introduction}

Let $d\geq 2$ and let $\Omega\subset\R^d$ be a bounded and simply connected domain of class $C^2$.  The time evolution in $\Omega$ of an incompressible inviscid fluid is described by the \emph{Euler equations}
\begin{equation}\label{E}
\left\{\begin{array}{rcll}
\partial_t u+ \diver (u \otimes u) +\nabla p &=& 0 &\text{in } \Omega\times (0,T)\\[1mm]
\diver u& = &0&\text{in } \Omega\times (0,T)\\[1mm]
u\cdot n&=&0&\text{on } \partial \Omega\times (0,T),
\end{array}\right.
\end{equation}
where $u\colon\Omega\times(0,T)\to\R^d$ and $p\colon\Omega\times(0,T)\to\R$ are the \emph{velocity} of the fluid and its \emph{hydrodynamic pressure}, respectively, and $n\colon\partial\Omega\to\R^d$ is the outward unit normal to $\partial \Omega$.  The boundary condition $u(\cdot, t)\cdot n=0$ on $\partial \Omega$ is the usual \emph{no-flow condition}, which prohibits the fluid to cross the boundary of the container $\Omega$.

\subsection{The pressure equation}
Forgetting about the time dependence of the unknown and only focusing on the spatial one, straightforward computations yield that the pressure $p$ solves the following elliptic Neumann boundary value problem
\begin{equation}
    \label{eq.pressureEq}
    \left\{
    \begin{array}{rcll}
        -\Delta p &=& \diver \diver (u\otimes u) & \text{ in }\Omega\\[2mm]
        \partial_n p&=&u\otimes u : \nabla n & \text{ on } \partial \Omega. 
    \end{array}
    \right.
\end{equation}
Indeed, the interior elliptic equation is obtained by taking the divergence of the first equation in~\eqref{E}, while  the boundary condition  follows by scalar multiplying the same equation by the unit normal $n\colon\partial\Omega\to\R^d$ and noticing that 
\begin{equation}
\label{reassembl_bound_cond}
\begin{split}
\partial_n p&=\nabla p\cdot n
=-\diver (u\otimes u)\cdot n =-\partial_i(u^i u^j)n^j
=-u^i\partial_i(u^j ) n^j
\\
&=-u_i\partial_i(u^j n^j)+u^iu^j\partial_i n^j
=-u\cdot\nabla(u\cdot n)+u\otimes u:\nabla n
=u\otimes u:\nabla n.
\end{split}
\end{equation}
Here we  used that $\partial \Omega$ is a level set of the scalar function $u\cdot n$, so $\nabla (u\cdot n)|_{\partial \Omega}$ is parallel to $n$.  In the previous chain of equalities, we implicitly assumed that the normal, which always satisfies $n\in C^1(\partial \Omega)$ on any $C^2$ domain, is extended to a neighbourhood of $\partial\Omega$ in order to compute its gradient.  
Note that the chain of equalities \eqref{reassembl_bound_cond} also shows (a posteriori) that the value of $u\otimes u:\nabla n|_{\partial \Omega}$ does not depend on the extension of the normal in a neighbourhood of $\partial \Omega$. 
Clearly, the pressure in \eqref{eq.pressureEq} is always determined up to a constant and its uniqueness can be restored by imposing $\int_{\Omega}p=0$.

\jump

To run the computations in \eqref{reassembl_bound_cond}, we have used $u\in C^1(\overline \Omega)$.  In order to deal with $u\in C^\gamma(\Omega)$, for $\gamma<1$,  we need to interpret \eqref{eq.pressureEq} in the weak sense, that is, we consider a scalar function $p\in C^0(\overline \Omega)$ such that 
\begin{equation}\label{p_weaksol}
-	\int_{\Omega} p\, \Delta \varphi +\int_{\partial \Omega} p \,\partial_n \varphi = \int_{\Omega} u\otimes u : H \varphi,
\qquad \forall  \varphi \in C^2(\overline \Omega),
\end{equation}
where  $H \varphi$ denotes the Hessian matrix of the scalar function $\varphi$.  As usual, \eqref{p_weaksol} is obtained by multiplying the first equation in \eqref{eq.pressureEq} by a test function $\varphi$ and then integrating by parts. Such weak formulation makes sense for every $u,p\in C^0(\overline \Omega)$ and it is actually the equation satisfied by any uniformly continuous couple $(u,p)$  weakly  solving \eqref{E}. 
The rigorous derivation of \eqref{p_weaksol} has been already given in \cite{DLS22}, if test functions whose support is not necessarily compactly contained in $\Omega$ are allowed.  However, since usually weak solutions of~\eqref{E} are defined with compactly supported test functions, we believe that it is worth to dedicate \cref{S:weaksol} to the proof of the equivalence of the two formulations, under quite mild assumptions on the pressure.  Conversely, careful manipulations on weak solutions show that any function $p$ solving \eqref{p_weaksol} gives indeed a pressure weakly solving \eqref{E},  if $u$ is suitably defined  (for instance by \eqref{E_project}). This provides the equivalence of the two approaches: either one starts from a distributional solution $(u,p)$ of \eqref{E} and  deduces the validity of \eqref{p_weaksol}, or one first solves \eqref{p_weaksol} and  obtains a function $p$ which guarantees $\partial_t u + u\cdot \nabla u=-\nabla p$ as distributions.  Further details  are given in \cref{S:weaksol}.

\subsection{Previous results} In the whole discussion below, we will always assume that the incompressible vector field $u$ is $\gamma$-H\"older regular and, when considering a bounded domain $\Omega$, it is also tangent to the boundary.

\jump

Looking at the equation $-\Delta p=\diver \diver (u\otimes u)$ and forgetting the technicalities that might arise from the presence of the boundary,  by standard Schauder's estimates we  get that $p$ is exactly as $\gamma$-H\"older regular as $u$, for every $\gamma\in (0,1)$. However, it has been recently noted that the quadratic structure of the right-hand side $\diver \diver (u\otimes u)$, together with the divergence-free condition $\diver u=0$, allows to increase the H\"older regularity of the pressure up to $2\gamma$.

\jump

At the best of our knowledge, such double regularity has been observed for the first time by L.~Silvestre \cite{SILV} when $\Omega=\R^d$, and then in \cites{CD18,Is2013} by different proofs which in particular generalize the double regularity to the periodic setting $\Omega=\T^d$. More precisely, such results show that, when $\Omega$ is either the whole space $\R^d$ or the torus $\T^d$, the pressure enjoys 
\begin{equation}\label{p_double_regular_litterature}
p\in \left\{\begin{array}{ll}
C^{2\gamma} &\text{ if } 0<\gamma<\frac{1}{2}\\[2mm]
C^{1,2\gamma-1} & \text{ if } \frac{1}{2}<\gamma<1,
\end{array}\right.
\end{equation}
as soon as $u\in C^\gamma$. Soon after, P. Constantin  \cite{C2014} proved that $p\in \loglip(\R^d)$ in the borderline case $\gamma=\frac{1}{2}$ by relying on some useful new local formulas for the pressure on $\R^d$. Let us emphasize that the case $\gamma=\frac12$ is naturally borderline for the regularity \eqref{p_double_regular_litterature}, because of the well-known failure of Schauder's estimates in H\"older spaces with integer exponents. We refer the reader to~\cite{CDF20}, where the double pressure regularity on $\T^d$ has been generalized to any Sobolev or Besov space. 

\jump

Remarkably, the double H\"older regularity of  the pressure on $\R^d$ or $\T^d$, together with several other fine regularity estimates along the flow of $u$,  has been used by P.~Isett in \cite{Is2013} to prove the smoothness of trajectories of  Euler flows when the velocity is strictly less than Lipschitz regular in the spatial variable, thus in a regime in which trajectories are not necessarily unique.  
Very recently, the double regularity of the pressure has been also crucially used in~\cite{DI22} to establish rigorous intermittency-type results in the framework of fully developed turbulence. 
An extensive description of the relevance of intermittency phenomena in the mathematical theory of turbulent flows can be found in the monograph~\cite{F95}. Moreover, on a bounded domain $\Omega\subset \R^d$, where the presence of the boundary adds highly non-trivial complications, the H\"older regularity of the pressure plays a fundamental role in the study of anomalous dissipation, see~\cites{BT18,BTW19,RRS18,RRS2018} and the references therein. 
For these reasons, the study of  the regularity of the pressure up to the boundary is of crucial importance, from both the mathematical and the physical point of view, and our result might be seen as a first step towards the extension of~\cites{DI22,Is2013} to bounded domains.

\jump

As usual when considering boundaries, the above analysis becomes more delicate when $\Omega\subset \R^d$ is a bounded and simply connected domain. Indeed, even the more standard \emph{single} regularity $p\in C^\gamma(\Omega)$  is not a straightforward consequence of Schauder's estimates up to the boundary.
Such regularity was first established by C.~Bardos and E.~Titi in \cite{BT21} for $2$-dimensional domains of class $C^2$ by relying on the global geodesic coordinates. 
Let us also mention that Bardos--Titi's result has been recently (after the first draft of the current paper was online) extended to $3$-dimensional $C^3$ domains in~\cite{BBT23}. The meticulous reader might notice that the boundary condition used in \cites{BT21, BBT23} has an additional term which allows to define the normal derivative of $p$ as an element of $H^{-2}(\partial \Omega)$.  This issue at the boundary is indeed quite subtle, and we postpone the discussion to the next paragraph.  

In our previous work \cite{DLS22}, we  extended Bardos--Titi's result to any dimension. 
Actually, we have been able to also partially double the pressure regularity by proving that, in any dimension $d\geq 2$, if $\Omega\subset \R^d$ is a simply connected open set of class $C^{2,\delta}$ for some $\delta>0$, then the pressure enjoys
\begin{equation}\label{p_double_regular_DLS22}
p\in \left\{\begin{array}{ll}
C^{\gamma}(\Omega) &\text{ if } 0<\gamma<\frac{1}{2}\text{ and } \delta>0\\[2mm]
C^{1,2\gamma-1}(\Omega) & \text{ if } \frac{1}{2}<\gamma<1 \text{ and } \delta=2\gamma-1.
\end{array}\right.
\end{equation}
In the case $\gamma<\frac12$, we exploited the explicit representation formula for $p$ via the Green--Neumann kernel on $\Omega$, while, for $\gamma\in(\frac{1}{2},1)$,  we relied on the already known double regularity  \eqref{p_double_regular_litterature} on~$\R^d$  by suitably extending the vector field $u$ to the whole space. Note that, when $\gamma>\frac{1}{2}$, the requirement $\delta=2\gamma-1$ in \eqref{p_double_regular_DLS22} is necessary, since otherwise the boundary condition $\partial_n p=u\otimes u:\nabla n\in C^{\min\{\delta,\gamma\}}(\partial \Omega)$ would be incompatible with $\nabla p\in C^{2\gamma-1}(\Omega)$. Moreover, once the single $\gamma$-H\"older regularity has been established, by the abstract interpolation argument developed in \cite{CDF20},  an almost double regularity $p\in C^{2\gamma-\eps}(\Omega)$, for any $\eps>0$  arbitrarily small, follows directly as soon as $\partial \Omega\in C^{3,\delta}$ for some $\delta>0$, see \cite{DLS22}*{Theorem 1.3}. 
Let us now explain the Neumann boundary datum in \eqref{eq.pressureEq}.  The derivation of $\partial_n p=u\otimes u:\nabla n$ we have given in  \eqref{reassembl_bound_cond} heavily relies on the assumption $u\in C^1(\overline \Omega)$, which is way above the H\"older regularity we want to consider in this paper. However,  the weak formulation \eqref{p_weaksol} makes sense whenever $u,p\in C^0(\overline \Omega)$. Thus, to circumvent any issue concerning the well-posedness of $\partial_n p$ as a distribution on $\partial \Omega$, what we will do is to show (double) H\"older regularity of the unique zero average solution $p$ to~\eqref{p_weaksol},  where $u\in C^\gamma(\Omega)$ is a given  divergence-free vector field tangent to the boundary.  Slightly abusing terminology, we will say that a weak solution $p$ to \eqref{eq.pressureEq} is a scalar function $p$ satisfying \eqref{p_weaksol}, keeping in mind that going back from \eqref{p_weaksol} to \eqref{eq.pressureEq} is not possible for purely continuous data. As we will discuss in \cref{S:weaksol}, such weak pressure formulation is indeed a natural variational condition satisfied by any couple $(u,p)$ weakly solving~\eqref{E}.  Further comparisons on the difference between the approaches in \cites{BT21,BBT23} versus the one used in the current paper (as well as in \cite{DLS22}) will be discussed in \cref{S:versus_BT}.  

\jump

In this paper, we complete the picture initiated in~\cite{DLS22} by establishing the double H\"older regularity (up to the boundary) of the pressure in the whole range $\gamma\in (0,\frac{1}{2}]$ in any bounded, simply connected and sufficiently regular domain $\Omega\subset\R^d$, including the borderline case $\gamma=\frac12$, in which we prove that the pressure belongs to the H\"older--Zygmund space. Together with \cite{DLS22}, this gives the full double regularity for every $\gamma\in (0,1)$,  in a sufficiently regular simply connected bounded domain $\Omega\subset \R^d$. Our main result is rigorously stated below.

\subsection{New main result}

We let $C^1_*(\Omega)$ be the usual H\"older--Zygmund space, see \cref{s:function_spaces}, and in particular \eqref{d:zygmund_omega}, for  its precise definition. We prove the following

\begin{theorem}\label{thm.main} 
Let $\gamma \in (0,\frac{1}{2}]$  and let $\Omega \subset \mathbb{R}^d$ be a bounded and simply connected domain of class $C^{2, 1}$.  
If $u \in C^{\gamma}(\Omega)$ is a  divergence-free vector field such that $u\cdot n|_{\partial\Omega}=0$, then there exists a unique zero-average solution $p\in C^0(\overline \Omega)$ of~\eqref{p_weaksol} such that
\begin{equation}\label{p_contin_est}
    \|p\|_{C^{2\gamma}(\Omega)} \leq C \|u\|_{C^{\gamma}(\Omega)}^2 \quad\text{for}\ \gamma < \frac{1}{2}
\end{equation}

and 
\begin{equation}\label{p_contin_est_limit}
    \|p\|_{C_*^{1}(\Omega)} \leq C \|u\|_{C^{\frac{1}{2}}(\Omega)}^2
    \quad\text{for}\ \gamma = \frac{1}{2},
\end{equation}
where $C>0$ is a constant depending on $\Omega$ and $\gamma$.
\end{theorem}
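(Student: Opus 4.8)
The plan is to treat the interior and boundary contributions separately, since the double regularity near $\partial\Omega$ requires a fundamentally different argument than in the interior. For the interior estimate, I would first establish the whole-space result: if $v\in C^\gamma(\R^d)$ is divergence-free and compactly supported (or more generally tempered), then the solution of $-\Delta \pi = \diver\diver(v\otimes v)$ on $\R^d$ satisfies $\pi\in C^{2\gamma}_*(\R^d)$ with the quadratic bound $\|\pi\|_{C^{2\gamma}_*}\lesssim \|v\|_{C^\gamma}^2$, \emph{including} the borderline case $\gamma=\tfrac12$ where one lands in the Zygmund space $C^1_*$. The key computational point is that, using $\diver v=0$, one can write $\pi = \sum_{i,j} R_iR_j(v^iv^j)$ where $R_i$ are Riesz transforms, and then exploit the commutator/paraproduct structure: in Littlewood--Paley language, $\Delta_k\pi$ is controlled by $\sum_{k'\gtrsim k} \|\Delta_{k'}v\|_\infty^2$ plus high-high interactions, and the divergence-free condition kills the most singular paraproduct term $T_{v^i}v^j$ after applying $\partial_i$. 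This gives $\|\Delta_k \pi\|_\infty \lesssim 2^{-2\gamma k}\|v\|_{C^\gamma}^2$ for $\gamma\le\tfrac12$, which is exactly the $C^{2\gamma}_*$ bound. To transfer this to the interior of $\Omega$, I would localize: for $x_0$ well inside $\Omega$, multiply $p$ by a cutoff, extend $u$, and use the whole-space estimate together with standard interior Schauder theory to absorb the lower-order commutator terms coming from the cutoff.

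For the boundary part I would work in a neighborhood of a boundary point and straighten the boundary using \emph{local geodesic normal coordinates}, chosen (following the acknowledgement to Burq) precisely so that the metric has favorable structure at the boundary — in particular the boundary becomes $\{x_d=0\}$ and the normal derivative $\partial_n$ becomes $\partial_{x_d}$ up to lower order, with the metric coefficients agreeing with the identity to first order on $\{x_d=0\}$. In these coordinates the equation \eqref{eq.pressureEq} becomes a variable-coefficient elliptic Neumann problem $-\diver(A\nabla p) = \diver\diver F + (\text{l.o.t.})$ on a half-ball with $\partial_n p = g$ on the flat part, where $F = u\otimes u$ has the crucial quadratic structure and, because $u\cdot n=0$, one has good control on the normal component: $g = u\otimes u:\nabla n$ is genuinely $C^{\min\{\gamma,1\}}=C^\gamma$ and, more importantly, the "bad" trace terms one would naively fear are tangential and vanish to the right order. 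I would then run a Littlewood--Paley analysis \emph{adapted to the half-space}, decomposing in the tangential frequencies, and use pseudodifferential calculus (Poisson-type operators, the Dirichlet-to-Neumann map for the variable-coefficient operator) to solve the boundary-value problem frequency-by-frequency. The gain from quadratic-times-divergence-free persists in these coordinates modulo commutators between the Riesz-type operators and the metric, and those commutators are one derivative smoother, hence harmless.

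The main obstacle — and the technical heart of the paper — will be the boundary analysis: one must show that after the change of variables the error terms (commutators of the smoothing operators with the $C^{1,1}$ metric, the curvature terms $\nabla n$, and the coupling between tangential and normal frequencies in the half-space Littlewood--Paley decomposition) are all controlled by $\|u\|_{C^\gamma}^2$ at the level of $C^{2\gamma}_*$, with no loss, and in particular that the borderline case $\gamma=\tfrac12$ does not degrade to a log-Lipschitz bound as in \cite{C2014} but genuinely reaches the Zygmund class $C^1_*$. This is where the $C^{2,1}$ hypothesis on $\Omega$ is used sharply: the metric coefficients are Lipschitz, so their Littlewood--Paley blocks decay like $2^{-k}$, which is exactly enough to beat the $2^{2\gamma k}\le 2^k$ growth we are chasing. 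A secondary (but necessary) preliminary step, carried out in \cref{S:weaksol}, is to pin down that the $p$ we analyze — the unique zero-average solution of \eqref{p_weaksol} — is the same object produced by any of the equivalent weak formulations, so that the a priori estimate we prove is meaningful; once existence/uniqueness of $p\in C^0(\overline\Omega)$ is in hand (e.g.\ from \cite{DLS22} or the Green--Neumann representation), the theorem reduces to the two quantitative estimates \eqref{p_contin_est}--\eqref{p_contin_est_limit}, which is what the interior and boundary analyses above deliver after a partition of unity subordinate to a finite cover of $\overline\Omega$ by one interior chart and finitely many boundary charts.
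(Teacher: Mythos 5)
Your interior argument coincides with the paper's: the whole-space estimate (here \cref{t:double_Rd_whole_range}) is proved exactly by writing the solution as a zero-order operator applied to $v^iv^j$, splitting into high-high and low-high Littlewood--Paley interactions, and using $\diver v=0$ to rewrite the double divergence as $\partial_j v^i_K\partial_i v^j_M$ in the low-high regime; the borderline $\gamma=\tfrac12$ lands in $C^1_*$ with no extra work, and the transfer to $\widetilde\Omega\Subset\Omega$ via extension of $u$ and harmonicity of the difference is as in \cref{t:main_interior}. So that half of the proposal is sound and essentially the paper's route.

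The gap is in the boundary part. Your plan is to stay on the half-space and solve the variable-coefficient Neumann problem ``frequency-by-frequency'' with a tangential Littlewood--Paley decomposition, Poisson-type operators and the Dirichlet-to-Neumann map. None of this is carried out, and at this regularity it is not routine: the DtN map of a divergence-form operator with merely Lipschitz coefficients is not a classical pseudodifferential operator, and the assertion that all commutators with the metric and all tangential/normal frequency couplings are harmless --- in particular at the borderline $\gamma=\tfrac12$ --- is precisely what has to be proved. The paper avoids the half-space calculus altogether by a step your sketch is missing: after passing to geodesic normal coordinates (\cref{prop.geometry}, which gives only $g^{11}=1$ and $g^{1i}=0$; the tangential block is \emph{not} the identity on the boundary, contrary to what you state), one first removes the Neumann datum by subtracting the $C^{1,\beta}$ solution of an auxiliary Neumann problem (\cref{l:remove_boundary}), reducing to $\partial_r p(0,\theta)=0$, and then \emph{reflects} across $\{r=0\}$: $p$, the metric and $u^\theta$ are extended evenly, $u^r$ oddly, and one checks --- using $u^r(0,\theta)=0$, $\partial_r p(0,\theta)=0$ and the divergence structure of both sides --- that no distributional jumps are created, so the extended functions solve the same equation on a full neighbourhood. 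Only then does the pseudodifferential machinery enter, on all of $\R^d$: a sharp/flat splitting of the symbol $g^{ij}\xi_j$, ellipticity and inversion in $S^2_{1,\delta}$, and whole-space Littlewood--Paley estimates exploiting $\partial_i(Gu^i)=0$. Without this reflection (or a genuinely worked-out half-space theory replacing it), your boundary estimate \eqref{p_contin_est_boundary_local} is asserted rather than proved. Two further omissions, more routine but necessary for \cref{thm.main} as stated: the localized estimates produce an extra $\|p\|_{L^\infty(\Omega)}$ on the right, which is removed by a compactness argument resting on the uniqueness \cref{L:uniq}; and the passage from smooth data to general $u\in C^\gamma(\Omega)$ uses the structure-preserving approximation of \cref{l:approx}, not just an abstract existence statement.
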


The above theorem states that, in the class of $C^0(\overline \Omega)$ functions, there exists a unique zero-average pressure solving \eqref{p_weaksol}, which moreover enjoys \eqref{p_contin_est} and \eqref{p_contin_est_limit}.  However, we emphasize that looking in the larger class $p\in L^1(\Omega)$ does not modify the result, but in this setting one has to be careful about how to define the boundary integral in the left-hand side of \eqref{p_weaksol}. In \cref{R:only_integrable} we show that $p\in L^1(\Omega)$ indeed suffices to have a well-defined trace operator for $p$, a property which we believe being a quite nice outcome of precise cancellations in the quadratic term $\diver \diver (u\otimes u)$.

Similarly to \cite{Is2013},
the proof of \cref{thm.main} is based on the Littlewood--Paley analysis in the frequency space, that we introduce in  \cref{s:function_spaces}.  Moreover, to achieve the above result, we prove the interior and the local boundary regularity estimates separately, see \cref{t:main_interior} and \cref{t:local_boundary}, respectively. 

The interior regularity comes as a consequence of the more general \cref{t:double_Rd_whole_range},  providing the double pressure regularity on the whole $\R^d$  for $u\in C^\gamma_c(\R^d)$.  In the case $\gamma<\frac{1}{2}$, this was indeed already known (as discussed above in \eqref{p_double_regular_litterature}). The main novelty here is for the borderline value $\gamma=\frac{1}{2}$, for which we achieve $p\in C^1_*(\R^d)$, thus improving the $\loglip$ regularity proved in~\cite{C2014} (see \cref{r:loglip_vs_zygmund}).
The main reason for such a sharper regularity in the borderline case lies in the flexibility of the Littlewood--Paley analysis when dealing with  estimates in the borderline H\"older--Zygmund space.

To deal with the boundary  regularity, we pass to the \emph{normal geodesic coordinate} system (see \cref{prop.geometry}) in a neighbourhood of $\partial \Omega$. 
This new coordinate frame, more precisely
the new local induced metric, allows to suitably extend the datum and the unknown to the whole space.  Once on $\R^d$, we 
analyze the corresponding transformed equation by means of the pseudodifferential formalism. The quantitative regularity estimates in $C^{2\gamma}_*$ are then obtained via Littlewood--Paley analysis.
The present strategy is more robust than the one adopted in our previous work \cite{DLS22}, and it seems flexible enough to be helpful also in less regular settings, such as Sobolev or Besov. We refer to \cref{rem.extensionsThm} for a discussion on possible extensions of \cref{thm.main}.

\jump

The  assumption $\partial \Omega \in C^{2,1}$ is needed to work with the boundary normal coordinates. In particular, it ensures that the new local induced metric is Lipschitz, which plays a crucial role in the boundary regularity performed in \cref{s:main_proof_new}. Thus, for rougher domains (say $C^2$),  a different approach at the boundary might be necessary.  We will try to keep track of the minimal regularity of $\partial \Omega$ which is required in every step of the proof, making it easier to localize where the criticality happens.

\jump

The corresponding result for solutions $p$ which are not necessarily average-free directly follows.

\begin{corollary}\label{cor.main} 
Let $\gamma \in (0,\frac{1}{2}]$ and let $\Omega \subset \mathbb{R}^d$ be a bounded and simply connected domain of class $C^{2, 1}$.
If $u \in C^{\gamma}(\Omega)$ is a divergence-free vector field such that $u\cdot n|_{\partial\Omega}=0$, then every weak solution $p\in C^0(\overline \Omega)$ of~\eqref{p_weaksol}  is unique up to constants and satisfies
\begin{equation*}
    \left\|p-\int_{\Omega} p(x)\,dx \right\|_{C^{2\gamma}(\Omega)} \leq C \|u\|_{C^{\gamma}(\Omega)}^2 \quad\text{for}\ \gamma < \frac{1}{2}
\end{equation*}
and 
\begin{equation*}
    \left\|p-\int_{\Omega}p(x)\,dx\right\|_{C_*^{1}(\Omega)} \leq C \|u\|_{C^{\frac{1}{2}}(\Omega)}^2
    \quad \text{for}\ \gamma=\frac12,
\end{equation*}
where $C>0$ is a constant depending on the $\Omega$ and $\gamma$.
\end{corollary}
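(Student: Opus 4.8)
\noindent
\textit{Proof proposal.}
The plan is to deduce \cref{cor.main} directly from \cref{thm.main}, exploiting the linearity of \eqref{p_weaksol} together with the observation that the additive constants are exactly the kernel of the operator on the left-hand side of \eqref{p_weaksol}. Concretely, for every $c\in\R$ and every $\varphi\in C^2(\overline\Omega)$ the divergence theorem gives
\[
-\int_\Omega c\,\Delta\varphi+\int_{\partial\Omega}c\,\partial_n\varphi
= c\left(-\int_\Omega \Delta\varphi+\int_{\partial\Omega}\partial_n\varphi\right)=0 ,
\]
so that, if $p\in C^0(\overline\Omega)$ solves \eqref{p_weaksol} with datum $u$, then $p-c\in C^0(\overline\Omega)$ solves \eqref{p_weaksol} with the same $u$ for every $c\in\R$. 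Moreover, since $p\in C^0(\overline\Omega)\subset L^1(\Omega)$ and $|\Omega|<\infty$, there is a unique constant $c_p$ (namely $c_p=\frac{1}{|\Omega|}\int_\Omega p\,dx$, which is what $\int_\Omega p(x)\,dx$ denotes here) such that $p-c_p$ has zero average over $\Omega$.

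First I would address the estimates. Let $p\in C^0(\overline\Omega)$ be any weak solution of \eqref{p_weaksol}. By the observation above, $p-c_p$ is a \emph{zero-average} solution of \eqref{p_weaksol} with the same datum $u$, hence by the uniqueness statement in \cref{thm.main} it coincides with the unique zero-average solution produced by that theorem. Therefore $p-c_p$ inherits the bound \eqref{p_contin_est} when $\gamma<\frac12$ and the bound \eqref{p_contin_est_limit} when $\gamma=\frac12$, which is precisely the claimed estimate for $p$ minus its mean value over $\Omega$.

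It remains to check uniqueness up to constants. Let $p_1,p_2\in C^0(\overline\Omega)$ be two weak solutions of \eqref{p_weaksol} with the same datum $u$. By the argument above, both $p_1-c_{p_1}$ and $p_2-c_{p_2}$ are zero-average solutions of \eqref{p_weaksol} with datum $u$, so by the uniqueness part of \cref{thm.main} they coincide; consequently $p_1-p_2=c_{p_1}-c_{p_2}$ is a constant. This completes the proof. I do not expect any genuine obstacle in this corollary, since it is a soft consequence of \cref{thm.main}: the only points deserving a line of justification are the kernel computation via the divergence theorem and the well-posedness of the constant being subtracted.
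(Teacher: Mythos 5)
Your proposal is correct and is exactly the argument the paper has in mind: it states that the corollary ``directly follows'' from \cref{thm.main}, and the kernel computation $-\int_\Omega \Delta\varphi+\int_{\partial\Omega}\partial_n\varphi=0$ you use to shift by a constant is the same identity the paper itself exploits in the proof of \cref{L:uniq}. Your handling of the mean-value normalization (reading $\int_\Omega p\,dx$ as the average) and the reduction of both the estimates and the uniqueness-up-to-constants to the zero-average case of \cref{thm.main} is precisely the intended, essentially one-line deduction.
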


\subsection{Organization of the paper}In \cref{s:all_main_tools} we discuss all the main technical ingredients that are needed to run our proof: we start by discussing the relation between weak solutions of \eqref{E} and the pressure equation \eqref{p_weaksol}, we introduce the local normal geodesic coordinates,  the Littlewood--Paley analysis, the definition of the H\"older--Zygmund spaces and the pseudodiffrential formalism. In \cref{s:interior} we  prove the interior boundary regularity. \cref{s:boundary} and \cref{s:main_proof_new} are devoted to the boundary analysis: in \cref{s:boundary} we transform the equation, locally at the boundary, in the new coordinate system, while in \cref{s:main_proof_new}  we conclude the proof providing the quantitative stability estimate for the equation in the new coordinate system. We conclude with \cref{rem.extensionsThm} in which we discuss further extensions.

\section{Preliminaries}
\label{s:all_main_tools}

In this section we prove or simply recall results that we will need in our proof.

\subsection{The weak pressure equation}\label{S:weaksol} 

In our previous work~\cite{DLS22}, we showed that, if $u,p\in C^0(\overline \Omega\times (0,T))$ weakly solve~\eqref{E} in the following sense
\begin{equation}
\label{weak_form_test_noncompact}
\int_0^T \int_{\Omega} u\cdot \partial_t \psi + u\otimes u:\nabla \psi +p\diver \psi = \int_0^T \int_{\partial \Omega} p \psi\cdot  n\qquad \forall \psi\in C^1_c(\overline \Omega\times (0,T)),
\end{equation}
then the couple $(u(t),p(t))$, for every fixed $t$-time slice, solves \eqref{p_weaksol}. This is done by simply choosing $\psi(x,t)=\eta(t) \nabla \varphi (x)$,  where $\varphi\in C^2(\overline \Omega)$ is arbitrary.  The formulation \eqref{weak_form_test_noncompact} appears already in the literature, but usually weak solutions of~\eqref{E} are defined by restricting the support of the test vector field $\psi$ to be compactly contained in the open set $\Omega$. Let us now clarify that such two slightly different formulations are equivalent.

\jump

Let $u\in C^0(\overline \Omega\times (0,T))$ be  divergence-free and tangent to the boundary.  The following derivation can be rigorously given under the  very mild assumption $p\in L^1(\Omega\times (0,T))$, but, for the purposes of this presentation, let us assume $p\in C^0(\overline \Omega\times (0,T))$ for simplicity and postpone to \cref{R:only_integrable} the finer discussion when $p$ is merely integrable. Suppose that the couple $(u,p)$ solves 
\begin{equation}\label{weak_form_compact}
\int_0^T \int_{\Omega} u\cdot \partial_t \tilde \psi + u\otimes u:\nabla \tilde \psi +p\diver \tilde \psi = 0, \qquad \forall \tilde \psi\in C^1_c( \Omega\times (0,T)),
\end{equation}
that is, the usual weak formulation of \eqref{E} with compactly supported test vector fields. We want to prove that \eqref{weak_form_test_noncompact} holds. Let $\tilde \Omega\Subset \Omega$ any open set with smooth boundary, $\chi\in C^1_c(\tilde \Omega)$ and $\psi \in C_c^1(\overline \Omega\times (0,T))$. Then $\tilde \psi:=\chi \psi$ is an admissible test for \eqref{weak_form_compact},  and  we obtain
$$
\int_0^T \int_{\Omega}\chi \left( u\cdot  \partial_t  \psi + u\otimes u:\nabla  \psi +p\diver  \psi\right) + \int_0^T \int_{\Omega} (u\cdot \psi) ( u\cdot \nabla \chi)+ p \psi\cdot \nabla \chi= 0.
$$ 
If we let $\chi$ converge to the indicator function of $\tilde \Omega$, then $\nabla \chi$ converges to $-\tilde n \mathcal{H}^{d-1}\llcorner \partial \tilde \Omega$ in the sense of measures, being $\tilde n$ the outward unit normal vector to $\partial \tilde \Omega$ and $\mathcal{H}^{d-1}\llcorner \partial \tilde \Omega$ the standard surface measure.  Thus, we achieve
\begin{equation}\label{eq_weak_interior_domain}
\int_0^T \int_{\tilde \Omega} u\cdot  \partial_t  \psi +  u\otimes u:\nabla  \psi +p\diver  \psi -  \int_0^T \int_{\partial \tilde \Omega}(u\cdot \psi) ( u\cdot \tilde n)= \int_0^T \int_{\partial \tilde \Omega} p \psi\cdot \tilde n, 
\end{equation}
from which, by letting $\tilde \Omega$ invading the whole domain $\Omega$, and recalling that $u$ is tangent to $\partial \Omega$, we conclude the validity of \eqref{weak_form_test_noncompact}. It is clear that, in the above derivation, no assumption on $\partial_n p$ is needed, but only the trace of $p$ on $\partial \Omega$ has to be well defined. In this way,  i.e.\  working only with the weak formulation \eqref{p_weaksol}, we avoid any technical difficulty coming from having a well-defined  boundary condition in \eqref{eq.pressureEq}, which is a very delicate issue,  as noticed in \cites{BT21,BBT23}.

\begin{remark}\label{R:only_integrable}
Assume only $p\in L^1(\Omega\times (0,T))$. Since in the interior we have $-\Delta p=\diver \diver (u\otimes u)$, then by standard Schauder's estimates we have $p(\cdot, t)\in C^\gamma_{\rm loc}(\Omega)$. In particular $p\big|_{\partial \tilde \Omega}$ is  a well defined object for every $\tilde \Omega\Subset \Omega$,  and the formulation \eqref{eq_weak_interior_domain} can be derived exactly as before. 
Differently from above where we assumed $p\in C^0(\overline \Omega\times (0,T))$,  to let $\tilde \Omega$ invading the whole domain $\Omega$ we have to make sure that there is a well-defined notion of $p\big|_{\partial \tilde \Omega}$, when testing with sufficiently regular functions.  This is a quite fine a posteriori property of \eqref{eq_weak_interior_domain}. Indeed the left-hand side has a unique limit as $\tilde \Omega$ goes to $\Omega$, which in turn implies the existence of a well-defined notion of $p\big|_{\partial  \Omega}$ as a linear and continuous operator acting on $\psi \in C^1_c(\overline \Omega\times (0,T))$.  Thus, we have obtained \eqref{weak_form_test_noncompact} with the right-hand side replaced by $\langle p\big|_{\partial \Omega}, \psi\cdot n \rangle$.
Choosing $\psi(x,t)=\eta(t) \nabla \varphi (x)$ as already discussed above, we achieve that, for every $t$-time slice, the couple $(u(t),p(t))\in C^\gamma(\Omega) \times L^1(\Omega)$ solves 
\begin{equation}\label{p_very_very_weak}
-	\int_{\Omega} p\, \Delta \varphi +\langle p \big|_{\partial \Omega},\partial_n \varphi\rangle = \int_{\Omega} u\otimes u : H \varphi,
\qquad \forall \varphi \in C^2(\overline \Omega),
\end{equation}
where $p\big|_{\partial \Omega}$ is the linear operator we have defined above.
In particular, this shows that \cref{thm.main} and \cref{cor.main} could have been stated in the larger class of integrable pressures, not necessarily continuous on $\overline \Omega$, modulo interpreting weak solutions in the sense of \eqref{p_very_very_weak}.
\end{remark}
To complete the picture on the relation between weak solutions of \eqref{E} and the the pressure equation \eqref{p_weaksol}, let us now address the opposite question: does any solution to \eqref{p_weaksol} give an admissible pressure which appears as a gradient in the weak formulation of \eqref{E}? The answer is affirmative, modulo defining weak solutions of \eqref{E} in the right way, that is, when all test vector fields orthogonal (in an $L^2$ sense) to gradients are allowed. By the Helmholtz decomposition on a bounded domain, such tests are given by divergence-free vector fields, tangent to the boundary.  Thus, we say that $u$ is a weak solution of \eqref{E} if 
\begin{equation}\label{E_project}
\int_0^T \int_{\Omega} u\cdot  \partial_t  \psi + u\otimes u:\nabla  \psi  = 0 \qquad \forall \psi\in C^1_c(\overline \Omega\times (0,T)), \, \diver \psi=0, \, \psi\cdot n\big|_{\partial \Omega}=0,
\end{equation}
and moreover, for a.e. $t\in (0,T)$, $u$ is divergence-free and tangent to the boundary, which in the weak sense can be written as $\int_\Omega u(x,t)\cdot \nabla q(x)\,dx=0$ for all $q\in C^1(\overline \Omega)$, and a.e. $t\in (0,T)$.  The formulation \eqref{E_project} is indeed the one which is formally derived from \eqref{E} by multiplying by $\psi$ and integrating by parts, which in particular motivates the choice of the test functions in order to make the pressure disappear. To fix the ideas,  and coherently with the regularity assumptions used in the current paper, let us assume that $u\in L^\infty((0,T);C^\gamma(\Omega))$ solves \eqref{E_project}, with $\gamma<\frac12$.  For a.e. $t\in (0,T)$ we let $p$ be the, unique up to constants,  solution of \eqref{p_weaksol}. Our \cref{thm.main} proves that $p\in L^\infty((0,T);C^{2\gamma}(\Omega))$. We want to prove that \eqref{weak_form_test_noncompact} holds true with this choice of~$p$.
\begin{remark}\label{R:density_test}
By classical density arguments,  the formulation \eqref{E_project} is equivalent to the one in which  $\psi(x,t)=\eta(t) f(x)$, where $\eta\in C^1_c((0,T))$ and $f\in H^1(\Omega)$,  such that $\diver f=0$ and $f\cdot n\big|_{\partial \Omega}=0$.  For the same reason, and since $p$ is continuous up to the boundary in the space variable, the formulation \eqref{weak_form_test_noncompact} does not modify if we restrict to $\psi(x,t)=\eta(t) f(x)$,  where $\eta\in C^1_c((0,T))$ and $f\in H^1(\Omega)$.  Also in \eqref{p_weaksol} we can allow every $\varphi\in H^2(\Omega)$.
\end{remark}
Thus, let us fix any test vector field of the form $\psi(x,t)=\eta(t) f(x)$,  where $\eta\in C^1_c((0,T))$ and $f\in H^1(\Omega)$. Solve 
\[
\left\{
    \begin{array}{rcll}
        \Delta \varphi &=& \diver f & \text{ in }\Omega\\[1ex]
        \partial_n \varphi &=&f\cdot n &\text{ on } \partial \Omega,
    \end{array}
    \right.
\] 
which admits a solution $\varphi \in H^2(\Omega)$,  unique up to constants, and decompose $f=f-\nabla \varphi + \nabla \varphi=: f_{\rm div}+ \nabla \varphi$. Clearly $f_{\rm div}\in H^1(\Omega)$,  $\diver f_{\rm div}=0$ and $f_{\rm div}\cdot n \big|_{\partial \Omega}=0$. By using \eqref{E_project}, together with $\int_{\Omega} u\cdot \nabla \varphi=0$ for a.e. $t\in (0,T)$, we have 
\begin{align*}
&\int_0^T \int_{\Omega} \left( u\cdot \partial_t \psi + u\otimes u:\nabla \psi +p\diver \psi \right)- \int_0^T \int_{\partial \Omega} p \psi\cdot  n\\
&=\int_0^T \int_{\Omega}\left( u\cdot f_{\rm div} \eta'+ u\otimes u:\nabla f_{\rm div} \eta\right) \\
&+ \int_0^T  \eta \left( \int_{\Omega} u\otimes u:\nabla (\nabla  \varphi)  + p \diver (\nabla \varphi) - \int_{\partial \Omega}  p \nabla \varphi \cdot  n \right)\\
&= \int_0^T\eta \left(  \int_{\Omega} u\otimes u:H \varphi  + p \Delta \varphi- \int_{\partial \Omega} p \partial_n \varphi\right) =0,
\end{align*}
where the last term in the above chain of equalities vanishes since we have chosen $p$ solving \eqref{p_weaksol}. This, together with the equivalence observed in \cref{R:density_test}, proves that the couple $(u,p)$ solves  \eqref{weak_form_test_noncompact}.

Summing up,  we have rigorously proved that starting from \eqref{weak_form_compact}, with the a priori assumption $p\in L^1(\Omega\times (0,T))$, then $p$ necessarily solves \eqref{p_very_very_weak}, that is the generalized version of \eqref{p_weaksol} when $p$ is not necessarily a priori continuous up to the boundary.  Conversely, we have also proved that, if $u$ is a weak solution to \eqref{E} in the sense of \eqref{E_project}, then the solution $p$ to \eqref{p_weaksol} (or, more generally, of \eqref{p_very_very_weak}) gives a pressure which, when coupled with $u$, solves \eqref{E} in the sense of  \eqref{weak_form_test_noncompact}, and thus also in the one of \eqref{weak_form_compact}.

\subsection{Normal geodesic coordinates}\label{sec.geometry}

Let  $\Omega \subset \mathbb{R}^d$ be a domain of class $C^{2,\delta}$ for some $\delta \in(0,1]$, endowed with the constant Euclidean metric $g^{ij}=g_{ij}=\delta_{ij}$ for all $i, j \in \{1, \dots, d\}$, where  $g_{ij}=\langle e_i, e_j \rangle$ is the scalar product between the basis vectors $e_i$ and $e_j$.

Let  $x_0 \in \partial\Omega$ and fix a small neighborhood $U$ of $x_0$ in $\R^d$. Let $U_0 \coloneqq U \cap \partial\Omega$, which is a neighborhood of $x_0$ in $\partial\Omega$, and consider a local coordinate chart for $\partial\Omega$ at $x_0$, \textit{i.e.}, a couple $(U_0,\varphi)$, where $\varphi \colon U_0 \to V_0=:\varphi(U_0) \subset \mathbb{R}^{d-1}$ is a $C^{2,\delta}$-diffeomorphism. We write $\varphi(x)=(\theta^1(x),\dots, \theta^{d-1}(x))$ and call  the $\theta^i$'s the \emph{local coordinates} on $\partial\Omega$ around $x_0$. Up to reducing the neighborhood $U$, one can assume that $U$ is a tubular neighborhood of $\partial\Omega$, that is, $U=\{x + tn(x) : -c \leq t \leq c,\ x \in \Sigma \subset \partial\Omega\}$ for some $c>0$, some portion of the boundary $\Sigma \subset \partial\Omega$ and where $n(x)$ stands for the inward pointing unit normal to $\partial\Omega$ at $x$.   

We can express the Euclidean metric $g$ in any new coordinate system $y^i$ (and where we  let $\frac{\partial}{\partial y^i}$ be the associated dual vector field) using the following change of variable formula for all $x \in \mathbb{R}^d$
\[
    g_{ij}(x)=\left\langle  \frac{\partial x}{\partial y^i}, \frac{\partial x}{\partial y^j}\right\rangle = \sum_{k=1}^d\frac{\partial x^k}{\partial y^i}\frac{\partial x^k}{\partial y^j}. 
\]

In our proof we will use the so-called \emph{normal geodesic coordinates} which are defined as follows. For any point $x\in U$, let us denote $\pi(x)$ the orthogonal projection (for the Euclidean metric) of $x$ on $\partial\Omega$ and write $r(x)=\operatorname{dist}(x,\partial\Omega)$. Shifting the indices, we also let $(\theta^i(x))_{2 \leq i \leq d}$ be  the coordinates of $\pi (x)$ on $\partial\Omega$. Our new coordinates are  $y^1=r$  and $y^j=\theta^j(x)$, $j\geq 2$, which define local coordinates in~$U$. 

\begin{prop}\label{prop.geometry}  Let $\Omega\subset \R^d$ a bounded domain of class $C^{2,\delta}$,  $\delta\in (0,1]$. With the above notation, the following statements hold true.
\begin{enumerate}
\item There exists a symmetric matrix $g^{\theta\theta}=[g^{\theta^i\theta^j}]_{2\leq i,j \leq d}$ with $C^{0,\delta}$ coefficients such that the inverse metric $g$ in the new coordinates $(y^i)_{1\leq i \leq d}$ reads as 
\[
    g=\begin{bmatrix} 1 & 0 \\
    0 & g^{\theta\theta}\end{bmatrix}.
\]
\item There exists $c>0$ such that $\xi^Tg(r,\theta)\xi \geq c |\xi|^2$ for all $\xi \in \mathbb{R}^d$ and all $(r,\theta)\in U$.
\end{enumerate}
\end{prop}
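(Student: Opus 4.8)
The plan is to compute the metric $g$ in the coordinates $(r,\theta)$ directly from the change-of-variables formula and read off both claims. First I would parametrize the tubular neighborhood $U$ explicitly: writing $X(\theta)$ for a $C^{2,\delta}$ parametrization of the portion $\Sigma\subset\partial\Omega$ by the boundary coordinates $\theta=(\theta^2,\dots,\theta^d)$, every point of $U$ is uniquely $x = X(\theta) + r\,n(X(\theta))$ for $|r|\le c$, where $n$ is the inward unit normal. This is the normal exponential map, and the implicit function theorem (applied to the nondegenerate map $x\mapsto(\pi(x),r(x))$, which is nondegenerate for $c$ small because $\partial\Omega$ is $C^{2}$, hence has bounded second fundamental form) guarantees it is a $C^{1,\delta}$ diffeomorphism onto $U$ after shrinking $c$ and $\Sigma$. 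Note the loss of one derivative: $X$ is $C^{2,\delta}$, so $n=$ (unit normal to $X$) is $C^{1,\delta}$, hence the parametrization of $U$ is $C^{1,\delta}$ and its differential, which is what enters $g_{ij}$, is $C^{0,\delta}$ — this is exactly why the coefficients $g^{\theta^i\theta^j}$ land in $C^{0,\delta}$ and no better.

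Next I would compute the coordinate vector fields. We have $\frac{\partial x}{\partial r} = n(X(\theta))$, a unit vector, so $g_{rr} = |n|^2 = 1$. For the mixed terms, $\frac{\partial x}{\partial\theta^i} = \frac{\partial X}{\partial\theta^i} + r\,\frac{\partial}{\partial\theta^i}n(X(\theta))$, and
\begin{equation*}
g_{r\theta^i} = \Big\langle n,\ \frac{\partial X}{\partial\theta^i} + r\,\partial_{\theta^i}n\Big\rangle = \Big\langle n, \frac{\partial X}{\partial\theta^i}\Big\rangle + r\,\langle n, \partial_{\theta^i} n\rangle = 0,
\end{equation*}
since $\frac{\partial X}{\partial\theta^i}$ is tangent to $\partial\Omega$ (whence orthogonal to $n$) and $\langle n,\partial_{\theta^i}n\rangle = \frac12\partial_{\theta^i}|n|^2 = 0$. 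This is precisely the Gauss lemma for normal coordinates: geodesics leaving the boundary orthogonally stay orthogonal to the level sets of $r$. So the metric $g$ is block-diagonal, $g = \mathrm{diag}(1, g_{\theta\theta})$ with $g_{\theta\theta} = [g_{\theta^i\theta^j}]$, and since each $\frac{\partial x}{\partial\theta^i}$ is a $C^{0,\delta}$ function of $(r,\theta)$, the entries $g_{\theta^i\theta^j} = \langle \frac{\partial x}{\partial\theta^i}, \frac{\partial x}{\partial\theta^j}\rangle$ are $C^{0,\delta}$. The inverse metric is then $g^{-1} = \mathrm{diag}(1, (g_{\theta\theta})^{-1})$, and $g^{\theta\theta} := (g_{\theta\theta})^{-1}$ inherits $C^{0,\delta}$ regularity because matrix inversion is smooth on the open set of invertible matrices and $g_{\theta\theta}$ is uniformly invertible (see below); this gives statement (1), with the mild abuse that the Proposition writes $g$ for what is really the inverse/contravariant metric.

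For statement (2), uniform ellipticity, the point is that at $r=0$ the block $g_{\theta\theta}(0,\theta)$ is exactly the first fundamental form of $\partial\Omega$ in the chart $\theta$, which is positive definite and, by compactness of $\overline\Sigma$ and continuity, bounded below by some $c_0>0$ uniformly in $\theta$. By the $C^{0,\delta}$ (in particular continuous) dependence on $r$ and compactness of $[-c,c]\times\overline\Sigma$, after possibly shrinking $c$ we keep $g_{\theta\theta}(r,\theta)\ge \frac{c_0}{2}\,\mathrm{Id}$; combined with $g_{rr}=1$ and the block structure this yields $\xi^T g(r,\theta)\,\xi \ge c\,|\xi|^2$ for a uniform $c>0$, and the same bound transfers to the inverse metric with constant $1/\Lambda$ where $\Lambda$ is a uniform upper bound on $g_{\theta\theta}$. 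The main obstacle — really the only subtle point — is the bookkeeping of regularity: one must check carefully that the normal exponential map is a genuine diffeomorphism on a definite-size neighborhood (not just an immersion), which requires controlling the Jacobian $\det(\mathrm{Id} - r\,S)$ where $S$ is the shape operator, bounded since $\partial\Omega\in C^2$; this forces the smallness of $c$ and is where the $C^{2,\delta}$ hypothesis is genuinely used. Everything else is the standard computation underlying Fermi/normal coordinates.
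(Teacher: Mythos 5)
Your proposal is correct and follows essentially the same route as the paper: compute the covariant metric in the Fermi coordinates, get $g_{rr}=1$ and $g_{r\theta^i}=0$ from the orthogonality of $n$ to both the tangential derivatives of the boundary parametrization and $\partial_{\theta^i}n$, read the $C^{0,\delta}$ regularity off the differential of the $C^{1,\delta}$ parametrization, and conclude uniform positivity by compactness. The only cosmetic differences are that the paper argues the vanishing of the mixed terms via first-order Taylor expansion of difference quotients rather than direct differentiation, and proves (2) by writing $g^{\theta\theta}=J^TJ$ with $J$ the inverse Jacobian instead of your (equally valid, and if anything more explicit about uniformity in $(r,\theta)$) lower bound on $g_{\theta\theta}$ at $r=0$ propagated by continuity and then inverted.
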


\begin{proof} Let us prove the two claims separately.

\jump

\textit{Proof of (1)}. We are going to prove that $\bar g = [g_{ij}]_{1 \leq i, j \leq d}$ in the coordinates $y^i$ assumes the block-diagonal form, therefore the inverse $g=[g^{ij}]_{1 \leq i, j \leq d}$ will have the same form. Note that by definition of $n$, $\pi$ and $r$, there holds $x=r(x)n(\pi(x)) + \pi(x)$ for any $x\in U$. 

We start by proving that $g_{11}=1$. With the notation $y^1=r$ and $(y^{2}, \dots, y^{d})=\theta$, we have to prove $\left\vert\frac{\partial x(r,\theta)}{\partial r}\right\vert^2=1$ for all $(r,\theta) \in V$. Observe that, for all $h$ small enough (in order to remain in the tubular neighborhood), there holds 
\[
    x(r+h,\theta)-x(r,\theta)=hn(\pi(r,\theta)),
\]
therefore $\frac{\partial x(r,\theta)}{\partial r}=n(\pi(r,\theta))$, which has norm $1$. 

Let us now prove that, for all $i \in \{2, \dots , d\}$, there holds $g_{1i}=0$, which is equivalent to $\left\langle\frac{\partial x(r,\theta)}{\partial \theta^i},n(\pi(r,\theta))\right\rangle = 0$. Let $(e_i)_{2 \leq i \leq d}$ be the Euclidean basis of $\mathbb{R}^{d-1}$. Let $k\geq 2$ and $\eta = (0, \dots, 0, \eta_k, 0, \dots, 0)$. We can therefore write
\begin{align*}
    x(r,\theta + \eta) &= n(r, \theta + \eta) + \pi(r,\theta + \eta) = n(r,\theta + \eta) + \varphi^{-1}((\theta^i + \delta_{ik}\eta_k)e_i),
\end{align*}
so that, using Taylor's formula at order one, we find 
\[
    x(r,\theta + \eta) - x(r,\theta) = \eta_k \partial_{\theta^k}n(r,\theta) + \eta_k \partial_k(\varphi^{-1})(\theta^ie_i)  + \mathcal{O}(\eta_k^2).
\]
Differentiating the equality $|n(r,\theta)|^2=1$ in $\theta^k$, we see that $\partial_{\theta^k}n(r,\theta)$ is orthogonal to $n(r,\theta)$. We observe that $\partial_k(\varphi^{-1})(\theta^ie_i)$ is also orthogonal to $n(r,\theta)$. Recall that $x(r,\theta)=r n(r,\theta)+\pi(r,\theta)$ and note that $g_{\theta\theta}$ is a matrix whose coefficients are functions of $\nabla x(r,\theta)$, therefore are $C^{0,\delta}$ as well as their inverse, \textit{i.e.}, $g^{\theta\theta} = (g_{\theta\theta})^{-1} \in C^{0,\delta}$,  completing the proof of the claim.

\smallskip 

\textit{Proof of (2)}.
Let us observe that, since $g$ is block-diagonal, it is enough to prove that 
\[
    \eta^Tg^{\theta\theta}(r,\theta)\eta \geq c |\eta|^2
    \quad 
    \text{for all}\
    \eta \in \mathbb{R}^{d-1}.
\]
Letting $J$ denote the inverse of the matrix  $(\partial_{\theta^j} x^i)_{2\leq i,j\leq d}$,  we have $g^{\theta\theta}=J^TJ$, so that we only need to explain why $\eta^Tg^{\theta\theta}(r,\theta)\eta = |J\eta |^2 \geq c >0$ when $|\eta|=1$. But the latter property readily follows since $J$ is invertible at every $(r,\theta) \in V$ and the unit sphere is a compact set. 
\end{proof}

We can write the Laplacian in the new coordinates as 
\begin{equation}
    \label{eq.geodesicLaplace}
    -\Delta p = -(\det(g))^{-1/2}\partial_i\left((\det(g))^{1/2}g^{ij}\partial_j p\right)= - \frac{1}{G} \partial_{i}(G g^{ij}\partial_jp),
\end{equation}
  where $G(r,\theta)\coloneqq\sqrt{\det g(r,\theta)}$ and  $\partial_i$ denote the derivatives in the new coordinates $r,\theta$. 
Moreover, in the new local coordinates system, the boundary condition $u \cdot n = 0$ on $\partial\Omega$ reads as $u^r(0,\theta)=0$ for all $\theta$ and, similarly, the boundary condition $\partial_n p = 0$ on $\partial\Omega$ is equivalent to $\partial_rp(0,\theta)=0$.
Finally, for a vector field $F=F^ie^i$ there holds
    \begin{equation}
\label{eq.divForumation}
        \diver F = (\det g)^{-1/2}\partial_i\left(F^i(\det g)^{1/2}\right),
    \end{equation}
 from which we get
    \begin{equation}
        \label{eq.doubleDiv}
        \diver \diver(u\otimes u)=(\det g)^{-1/2}\partial_{ij}^2\left((\det g)^{1/2}u^iu^j\right)=\frac{1}{G}\partial_{ij}^2\left(G u^iu^j\right). 
    \end{equation}

\subsection{Littlewood--Paley analysis, H\texorpdfstring{\"o}{ö}lder and H\texorpdfstring{\"o}{ö}lder--Zygmund spaces}\label{s:function_spaces}

We introduce a smooth Littlewood--Paley partition of the unity 
\[
    1 = \sum_{N \in 2^{\mathbb{N}}} \mathbf{P}_N(\xi)
\]
as follows. Let $\varphi$ be some smooth bump function which is non-negative, radially symmetric, supported on $\{|\xi|\leq 2\}$ and such that $\varphi(\xi)=1$ for all $|\xi|\leq 1$. We let $\mathbf{P}_1(\xi) \coloneqq \varphi (\xi)$ and  $\mathbf{P}_N(\xi)=\varphi(N^{-1}\xi)-\varphi(2N^{-1}\xi)$ for all $N\geq 2$. Therefore $\mathbf{P}_N$ is supported on $|\xi| \sim N$. Let $u \in \mathcal{S}'$ be a tempered distribution. For all dyadic integers $N \in 2^{\mathbb{N}}$ we define
\[
    u_N \coloneqq \mathbf{P}_Nu \coloneqq \mathcal{F}^{-1}(\mathbf{P}_N)* u,
\]
which is just $\hat{u}_N(\xi)=\mathbf{P}_N(\xi)\hat{u}(\xi)$ on the Fourier side. 
Because the Fourier transform of $u_N$ is compactly supported, $u_N$ is smooth. Also, due to the partition of unity property, there holds 
\[
    u = \sum_{N \in 2^{\mathbb{N}}}u_N.
\]
Let us also denote
\[
    u_{\leq N} \coloneqq \sum_{M\leq N} u_M \quad \text{ and } \quad u_{\geq N} \coloneqq \sum_{M\geq N} u_M,
\]
where the summation is on dyadic $M\in 2^{\mathbb{N}}$. In this article, when a summation is taken on capitalized letters, it is always assumed that the summation is on dyadic integers only. For a much more extensive presentation, we refer the reader to \cite{BCD}.

\jump

Using the frequency localization of $u_N$ and Young's inequality, one can infer the following very useful estimates that we shall constantly use in \cref{s:main_proof_new}. 

\begin{theorem}[Bernstein's Theorem~\cite{tao}*{Appendix A}]\label{thm.bernstein} Let $s \in \mathbb{R}$ and $p,q\in [1,\infty]$ such that $p\leq q$. 
Then, the following hold:
\begin{enumerate}[(i)]
    \item $\displaystyle\|u_N\|_{L^q} \lesssim  N^{d \left(\frac{1}{p}-\frac{1}{q}\right)}\|u_N\|_{L^p}$;
    \item $\| |\nabla| ^s u_N\|_{L^p} \sim N^s\|u_N\|_{L^p}$ if $N\geq 2$ and $\||\nabla| ^su_1\|_{L^p} \lesssim \|u_1\|_{L^{\infty}}$, where $|\nabla|^s$ is the Fourier multiplier  $|\xi|^s$;
    \item $\|u_{\leq N}\|_{L^p} \lesssim \|u\|_{L^p}$ and $\|u_{\geq N}\|_{L^p} \lesssim \|u\|_{L^p}$. 
\end{enumerate} 
In the latter inequalities, the implicit constants depend on $d$, $p$, $q$ and $s$ but not on~$N$. 
\end{theorem}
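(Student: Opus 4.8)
The plan is to derive all three estimates from the single principle that a tempered distribution whose Fourier transform is supported in a ball or an annulus of size $N$ can be recovered by convolving it against a \emph{fixed} Schwartz kernel, suitably rescaled, after which Young's convolution inequality does the rest. Concretely, I would fix once and for all a radial $\psi\in C^\infty_c(\R^d)$ with $\psi\equiv 1$ on $\{\tfrac14\le|\xi|\le 4\}$; then for every dyadic $N\ge 2$ the multiplier $\psi(\cdot/N)$ is identically $1$ on $\operatorname{supp}\mathbf{P}_N$, so that $u_N=\mathcal F^{-1}\bigl(\psi(\cdot/N)\bigr)*u_N$, and by scaling $\mathcal F^{-1}\bigl(\psi(\cdot/N)\bigr)(x)=N^d(\mathcal F^{-1}\psi)(Nx)$ has $L^r$-norm equal to $N^{d(1-1/r)}\|\mathcal F^{-1}\psi\|_{L^r}$. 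Feeding this into Young's inequality $\|f*g\|_{L^q}\le\|f\|_{L^r}\|g\|_{L^p}$ with $1+\tfrac1q=\tfrac1r+\tfrac1p$, hence $1-\tfrac1r=\tfrac1p-\tfrac1q$, yields $(i)$ for $N\ge 2$; the case $N=1$ is identical with a $\psi$ supported on a ball around the origin. For $(iii)$ I would use the same mechanism with $r=1$: writing $u_{\le N}=N^d(\mathcal F^{-1}\varphi)(N\cdot)*u$ gives $\|u_{\le N}\|_{L^p}\le\|\mathcal F^{-1}\varphi\|_{L^1}\|u\|_{L^p}$, and then $u_{\ge N}=u-u_{<N}$ together with the triangle inequality gives the companion bound.

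For $(ii)$ with $N\ge 2$ I would observe that $\psi(\cdot/N)$ is supported away from the origin, so $|\xi|^s\psi(\xi/N)=N^s m(\xi/N)$ with $m(\xi)\coloneqq|\xi|^s\psi(\xi)\in C^\infty_c(\R^d)$; therefore $|\nabla|^s u_N=N^{s+d}(\mathcal F^{-1}m)(N\cdot)*u_N$, and Young with $r=1$ produces $\||\nabla|^s u_N\|_{L^p}\lesssim N^s\|u_N\|_{L^p}$. Applying the very same bound with $-s$ in place of $s$ to the function $|\nabla|^s u_N$, whose spectrum is still localized at frequency $\sim N$, reverses the inequality and gives the claimed $\sim$. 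The only genuinely delicate point is the low-frequency endpoint: when $s$ is not an even integer the symbol $|\xi|^s\varphi(\xi)$ is compactly supported but only finitely smooth at the origin, so one cannot simply say that its inverse Fourier transform is Schwartz. I would instead note that $|\xi|^s\varphi\in H^m(\R^d)$ for every $m<s+\tfrac d2$, hence (for the relevant range $s>0$) for some $m>\tfrac d2$, and then Cauchy--Schwarz against $(1+|\xi|^2)^{-m/2}\in L^2$ shows $\mathcal F^{-1}(|\xi|^s\varphi)\in L^1(\R^d)$; Young's inequality then gives the boundedness of $|\nabla|^s$ on the lowest block, from which the stated estimate follows.

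I expect the scaling-plus-Young part to be completely routine, and the one place needing a little care is exactly the handling of the origin singularity of $|\xi|^s$ in the $N=1$ case of $(ii)$ just described; everything else is bookkeeping with the Littlewood--Paley projectors. For full rigour one should also record the elementary facts that $\operatorname{supp}\widehat{u_N}\subset\{|\xi|\sim N\}$ (so the fattened cutoffs $\psi(\cdot/N)$ really are $\equiv 1$ there), that $\operatorname{supp}\widehat{u_{\le N}}\subset\{|\xi|\lesssim N\}$, and that $u-u_{<N}=u_{\ge N}$ as tempered distributions, all of which are immediate from the definitions in \cref{s:function_spaces}. Alternatively, one can simply invoke \cite{tao}*{Appendix A} or \cite{BCD}, where these estimates are established in precisely this way.
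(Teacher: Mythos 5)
Your proposal is correct and is essentially the canonical argument: the paper itself gives no proof of \cref{thm.bernstein}, citing \cite{tao}*{Appendix A} (see also \cite{BCD}), and the proof there is exactly your scaling-plus-Young scheme with a fattened annular (resp.\ ball) cutoff reproducing $u_N$ (resp.\ $u_{\leq N}$), including your treatment of the origin singularity of $|\xi|^s$ on the lowest block. The only caveat is inherited from the statement itself rather than from your argument: the low-frequency bound $\||\nabla|^s u_1\|_{L^p}\lesssim\|u_1\|_{L^\infty}$ should be read with $p=\infty$ (the only way it is used in the paper), since your Young-inequality step naturally yields $\||\nabla|^s u_1\|_{L^p}\lesssim\|u_1\|_{L^p}$ and the $L^p$-versus-$L^\infty$ form fails for $p<\infty$ (e.g.\ $u_1(x)=e^{ix\cdot\xi_0}$, $|\xi_0|=1$).
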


The definition of the smooth truncation $\mathbf{P}_N$ implies that  $\mathbf{P}_N\mathbf{P}_M = 0$, unless $\frac{1}{4}M \leq N \leq4M$, which we abbreviate as $N \sim M$. We also write $N \gg M$ when $N \geq 8M$.

\jump

If $N \geq M$, then $u_Nv_M$ has frequency in $\{N- M \leq |\xi| \leq N+M\}$. In the case $N \sim M$, this means that $u_Nv_M$ is frequency supported in $\{|\xi| \lesssim N\}$, so that
$\mathbf{P}_K(u_Nv_M) = 0$ unless $K \lesssim N$. The other interesting case is when $N \gg M$, in which case $u_Nv_M$ is frequency localized in $\{|\xi|\sim N\}$, so that $\mathbf{P}_K(u_Nv_M)=0$ unless $K\sim N$. These considerations will be frequently used  in \cref{s:main_proof_new}. We shall also repeatedly use  the following simple facts about dyadic sums
\[
    \sum_{1\leq M<N}1 \lesssim \log(N), \quad \sum_{M<N}M^s \lesssim N^s \quad\text{and}\ \sum_{M>N}M^{-s} \lesssim N^{-s},
\]
for any $s>0$, where the implicit constants in the last two estimates might also depend on $s$.

\jump

We now recall some basic facts about H\"older--Zygmund spaces. For a more detailed account, we refer to \cite{taylor}. For $s \in \mathbb{R}$, we define 
\begin{equation}
    \label{eq.normHolderZ}
    \|u\|_{C^s_*} \coloneqq \sup_{N\in 2^{\mathbb{N}}} N^{s}\|u_N\|_{L^{\infty}},
\end{equation}
which is nothing but the Besov norm $\|u\|_{B^s_{\infty,\infty}}$.
It is well known that $C^s(\mathbb{R}^d)=C^s_*(\mathbb{R}^d)$ whenever $s\in \R^+\setminus \N$, where $C^s(\mathbb{R}^d)$ denotes the usual space of $s$-H\"older continuous functions, while $C^s(\mathbb{R}^d)\subset C^s_*(\mathbb{R}^d)$ with strict inclusion if $s\in \N$, see  \cite{taylor}*{Appendix~A} for instance.

\jump

Let us also recall the following useful facts about the borderline case $s=1$.

\begin{prop}
The space $C^1_*(\R^d)$ is the classical Zygmund space, and the norm~\eqref{eq.normHolderZ} is equivalent to 
    \begin{equation}\label{d:zygmund_difference_quotioent}
        \|u\|_{L^{\infty}(\R^d)} + \sup_{x \in \mathbb{R}^d}\sup_{h\in \R^d,\,h\neq 0} \frac{|u(x+h)+u(x-h)-2u(x)|}{|h|}.
        \end{equation}
    Moreover, $C^1_*(\R^d) \subset\loglip(\R^d)$ with continuous embedding, where $\loglip(\R^d)$ denotes the space of functions such that 
    \begin{equation}\label{d:loglip}
       \|u\|_{\loglip(\R^d)}\coloneqq  \|u\|_{L^{\infty}(\R^d)} + \sup_{\substack{x,y\in \R^d \\x \neq y}} \frac{|u(x)-u(y)|}{|x-y|\left(1+\left| \log |x-y|\right| \right)}<\infty.
    \end{equation}
\end{prop}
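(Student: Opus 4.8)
\emph{Strategy.} Everything will follow from the Littlewood--Paley characterisation, each assertion reducing to the summation of a geometric dyadic series of the type collected right after \cref{thm.bernstein}. Write $\Delta_h^2 u(x):=u(x+h)+u(x-h)-2u(x)$ and let $[u]_Z:=\sup_x\sup_{h\neq 0}|\Delta_h^2u(x)|/|h|$ be the seminorm in \eqref{d:zygmund_difference_quotioent}. I record once and for all that, for $N\geq 2$, $\mathbf{P}_N(\xi)=\Psi(N^{-1}\xi)$ for the fixed radial bump $\Psi(\eta):=\varphi(\eta)-\varphi(2\eta)$ supported in $\{\frac12\leq|\eta|\leq2\}$, so that $u_N=\psi_N*u$ with $\psi_N(y)=N^d\psi(Ny)$, where $\psi:=\mathcal F^{-1}\Psi$ is a fixed real, radial (hence even) Schwartz function \emph{all} of whose moments vanish (because $\Psi\equiv0$ near the origin); in particular $\int\psi=0$ and $\int|\psi(y)||y|\,dy<\infty$. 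For $N=1$ I only use $\|u_1\|_{L^\infty}\leq\|\mathcal F^{-1}\varphi\|_{L^1}\|u\|_{L^\infty}$ and its counterpart with one derivative, both instances of \cref{thm.bernstein}. Since we work with the \emph{inhomogeneous} decomposition, finiteness of \eqref{eq.normHolderZ} already forces $u\in L^\infty$: indeed $\sum_{N\in2^{\mathbb N}}N^{-1}<\infty$, so $u=\sum_Nu_N$ converges absolutely and uniformly and $\|u\|_{L^\infty}\leq\sum_N\|u_N\|_{L^\infty}\leq\|u\|_{C^1_*}\sum_NN^{-1}\lesssim\|u\|_{C^1_*}$.

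\emph{From $\|u\|_{C^1_*}$ to the Zygmund norm.} Having just bounded $\|u\|_{L^\infty}$, fix $x$ and $h\neq0$ and split $\sum_N$ at the dyadic scale $N_0\sim|h|^{-1}$. For $N\leq N_0$, a second-order Taylor expansion together with Bernstein's inequality gives $|\Delta_h^2u_N(x)|\lesssim|h|^2\|D^2u_N\|_{L^\infty}\lesssim|h|^2N^2\|u_N\|_{L^\infty}\leq|h|^2N\|u\|_{C^1_*}$, and summing the geometric series over dyadic $N\leq N_0$ yields $\lesssim|h|^2N_0\|u\|_{C^1_*}\sim|h|\,\|u\|_{C^1_*}$. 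For $N>N_0$, the crude bound $|\Delta_h^2u_N(x)|\leq4\|u_N\|_{L^\infty}\leq4N^{-1}\|u\|_{C^1_*}$ summed over dyadic $N>N_0$ gives $\lesssim N_0^{-1}\|u\|_{C^1_*}\sim|h|\,\|u\|_{C^1_*}$. Hence $[u]_Z\lesssim\|u\|_{C^1_*}$.

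\emph{From the Zygmund norm to $\|u\|_{C^1_*}$.} The frequency-one block is controlled by $\|u_1\|_{L^\infty}\lesssim\|u\|_{L^\infty}$. For $N\geq2$, the vanishing of $\int\psi_N$ and the evenness of $\psi_N$ let one symmetrise:
\[
u_N(x)=\int\psi_N(y)\,u(x-y)\,dy=\tfrac12\int\psi_N(y)\bigl(u(x+y)+u(x-y)-2u(x)\bigr)\,dy,
\]
whence $\|u_N\|_{L^\infty}\leq\tfrac12[u]_Z\int|\psi_N(y)|\,|y|\,dy=\tfrac12[u]_Z\,N^{-1}\int|\psi(z)|\,|z|\,dz\lesssim N^{-1}[u]_Z$, i.e. $N\|u_N\|_{L^\infty}\lesssim[u]_Z$. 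Taking the supremum over $N$ and combining with the previous paragraph gives $\|u\|_{C^1_*}\sim\|u\|_{L^\infty}+[u]_Z$, which identifies $C^1_*(\R^d)$ with the classical Zygmund space. For the embedding into $\loglip(\R^d)$ it remains to control the quotient in \eqref{d:loglip}. When $|x-y|\geq\frac12$ this is immediate from $|u(x)-u(y)|\leq2\|u\|_{L^\infty}$ since $|x-y|(1+|\log|x-y||)$ is then bounded below. When $0<|x-y|<\frac12$, pick a dyadic $M\sim|x-y|^{-1}$ (so $M\geq2$) and split $u=u_{\leq M}+u_{>M}$: the low part is Lipschitz with $\|\nabla u_{\leq M}\|_{L^\infty}\leq\sum_{N\leq M}\|\nabla u_N\|_{L^\infty}\lesssim\|u\|_{C^1_*}\,\#\{N\leq M\}\lesssim\|u\|_{C^1_*}(1+|\log|x-y||)$, using \cref{thm.bernstein}(ii) for $N\geq2$ and its $N=1$ version together with $\|u_1\|_{L^\infty}\lesssim\|u\|_{C^1_*}$; the high part satisfies $\|u_{>M}\|_{L^\infty}\leq\sum_{N>M}\|u_N\|_{L^\infty}\leq\|u\|_{C^1_*}\sum_{N>M}N^{-1}\lesssim M^{-1}\|u\|_{C^1_*}\lesssim|x-y|\,\|u\|_{C^1_*}$. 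Adding the two estimates gives $|u(x)-u(y)|\lesssim|x-y|(1+|\log|x-y||)\|u\|_{C^1_*}$, hence $\|u\|_{\loglip(\R^d)}\lesssim\|u\|_{C^1_*}$.

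\emph{Main obstacle.} The only step that is not pure bookkeeping is the reverse norm inequality: one must express the Littlewood--Paley block $u_N$ as a convolution against a mollifier possessing \emph{two} vanishing moments so as to pair it against the symmetric second difference $\Delta_y^2u$. This is precisely where the structural facts $\mathbf{P}_N(0)=0$ and the radial symmetry of $\mathbf{P}_N$ enter; all the rest is summation of dyadic geometric series and elementary Taylor estimates.
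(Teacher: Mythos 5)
Your proof is correct, and it takes a genuinely different route from the paper's. For the norm equivalence between \eqref{eq.normHolderZ} and \eqref{d:zygmund_difference_quotioent}, the paper simply cites Taylor's book, whereas you prove it from scratch: one direction by the standard split at the dyadic scale $N_0\sim|h|^{-1}$ (Taylor plus Bernstein below, tail summation above), and the converse by the symmetrization trick exploiting $\mathbf{P}_N(0)=0$ and the evenness of the kernel, which is exactly the right structural input and is carried out correctly (including the scaling $\int|\psi_N(y)||y|\,dy\sim N^{-1}$). For the $\loglip$ embedding, the paper argues entirely in physical space: it sets $w(h)=(u(x+h)-u(x))/|h|$, uses the second-difference bound (i.e.\ the cited equivalence) to get $|w(2^{k+1}h)-w(2^kh)|\lesssim\|u\|_{C^1_*}$, and telescopes over $k_0\sim|\log|h||$ doublings. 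You instead split $u=u_{\leq M}+u_{>M}$ at $M\sim|x-y|^{-1}$, so the logarithm appears as the number of dyadic blocks below $M$ (each Lipschitz with constant $\lesssim\|u\|_{C^1_*}$ by Bernstein) while the high-frequency tail contributes $\lesssim M^{-1}$. Your version has the advantage of working directly from the Littlewood--Paley definition \eqref{eq.normHolderZ}, making the whole proposition self-contained without the external reference; the paper's telescoping argument is shorter once the equivalence is granted and stays entirely in the difference-quotient formulation, which is also the form \eqref{d:zygmund_omega} used later on bounded domains. Both arguments are complete; the only cosmetic remark is that your appeal to the $N=1$ case of \cref{thm.bernstein} for $\|D^2u_1\|_{L^\infty}\lesssim\|u_1\|_{L^\infty}$ could be stated explicitly, but it is indeed covered by the compactly supported multiplier.
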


\begin{proof} 
For \eqref{d:zygmund_difference_quotioent}, we refer to \cite{taylor}*{Appendix A}. To prove the embedding, define  $w(h)\coloneqq(u(x+h)-u(x))/|h|$ for $h\neq 0$. Since $u\in C^1_*(\R^d)$, we have 
\[
\left| w(h)-w\left(\frac{h}{2}\right)\right|=\frac{\left| u(x+h)+u(x)-2u\left(x+\frac{h}{2}\right)\right|}{|h|}\leq C\|u\|_{C^1_*(\R^d)},
\]
from which we deduce 
\[
|w(2^{k+1}h)-w(2^k h)|\leq C\|u\|_{C^1_*(\R^d)},\quad \forall k\geq 0.
\]
The constant $C>0$ may vary from line to line in the next computations, but it is important that it will always be independent of $h$ and $k$.

\jump

Choose $k_0\in \N$ such that $e^{k_0}|h|\sim 1$ and add the previous inequality for all $0\leq k<k_0$, getting
\[
|w(2^{k_0}h)-w(h)|\leq k_0C\|u\|_{C^1_*(\R^d)}\leq C\|u\|_{C^1_*(\R^d)} \left| \log |h| \right|.
\]
Thus we finally achieve
\begin{align*}
|w(h)|&\leq |w(2^{k_0}h)|+|w(2^{k_0}h)-w(h)|\leq 2^{-k_0}|h|^{-1}\|u\|_{L^\infty(\R^d)}+ C\|u\|_{C^1_*(\R^d)} \left| \log |h| \right|\\
&\leq C\left(\|u\|_{L^\infty(\R^d)}+ \|u\|_{C^1_*(\R^d)}\right) \left(1+\left| \log |h| \right| \right),
\end{align*}
which, in terms of $u$, reads as
\[
|u(x+h)-u(x)|\leq C\left(\|u\|_{L^\infty(\R^d)}+ \|u\|_{C^1_*(\R^d)}\right)|h| \left(1+\left| \log |h| \right| \right)
\]
yielding the claimed embedding.
\end{proof}
We can use the equivalent norm \eqref{d:zygmund_difference_quotioent} to naturally define the Zygmund space $C^1_*$ on any bounded domain $\Omega$ by setting
\begin{equation}\label{d:zygmund_omega}
   \|u\|_{C^1_*(\Omega)}\coloneqq \|u\|_{L^{\infty}(\Omega)} + \sup_{\substack{
   x,\, x+h,\, x-h\,\in\, \Omega \\ h\neq 0}} \frac{|u(x+h)+u(x-h)-2u(x)|}{|h|}.
\end{equation}
Also the $\loglip$ norm \eqref{d:loglip}  extends to any bounded domain by setting
   \begin{equation}\label{d:loglip_omega}
       \|u\|_{\loglip(\Omega)}\coloneqq  \|u\|_{L^{\infty}(\Omega)} + \sup_{\substack{x,y\in \Omega \\x \neq y}} \frac{|u(x)-u(y)|}{|x-y|\left(1+\left| \log |x-y|\right| \right)}<\infty.
    \end{equation}
\begin{remark}\label{r:loglip_vs_zygmund}
Note that the function $u\colon[-1,1] \rightarrow \R$ given by $u(x)=-|x|\log|x|$, satisfies $u\in  \loglip([-1,1] )\setminus C^1_*([-1,1] )$, proving that $C^1_*([-1,1] ) \subsetneq \loglip([-1,1] )$.
\end{remark}

Finally, as it has been first observed by J.~M.~Bony \cite{B81}, we recall that the product of a distribution in $C^{r}_*(\R^d)$ for $r<0$ with a function in $C_*^{s}(\R^d)$ defines a distribution if $s+r>0$, see \cite{GIP15}*{Lemma 2.1} for instance.

\begin{lemma}[Product of distributions]\label{lem.holderProduct} 
Let $s,r\in\R$ be such that $r<0<s$ and $r+s>0$. 
If  $u \in C^r_*$ and $v\in C^s_*$,  then $uv \in C^{r}_*$, with 
\begin{equation}\label{est_prod_holder_distrib}
    \|uv\|_{C^r_{*}} \lesssim \|u\|_{C^r_*}\|v\|_{C^s_*}. 
\end{equation}
\end{lemma}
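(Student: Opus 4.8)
The plan is to prove the Bony-type estimate \eqref{est_prod_holder_distrib} via the paraproduct decomposition, which is the standard Littlewood--Paley route to such bounds. First I would write, for tempered distributions $u,v$, the formal product $uv=\sum_{N,M}u_Nv_M$ and split the double sum into the three regimes dictated by the frequency-support considerations recalled just before the statement: the ``low-high'' piece where $M\gg N$, the ``high-low'' piece where $N\gg M$, and the ``diagonal'' piece where $N\sim M$. In the first two (paraproduct) regimes, one of the factors is frequency-localized at the larger scale, so after applying a further Littlewood--Paley projection $\mathbf P_K$ only $O(1)$ dyadic blocks survive at each output frequency $K$; in the diagonal regime $\mathbf P_K(u_Nv_M)=0$ unless $K\lesssim N\sim M$, so one sums a tail.

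The key estimates are then as follows. For the high-low paraproduct $T_v u:=\sum_{M\gg N}u_Mv_N$ — wait, I should be careful about which factor carries the negative regularity. Write $uv = \sum_{N\sim M}u_Nv_M + \sum_{N\gg M}u_Nv_M + \sum_{M\gg N}u_Nv_M$. In the term $\sum_{N\gg M}u_Nv_M$, the output frequency is $K\sim N$, so
\[
\left\|\mathbf P_K\Big(\textstyle\sum_{N\gg M}u_Nv_M\Big)\right\|_{L^\infty}
\lesssim \sum_{N\sim K}\|u_N\|_{L^\infty}\Big\|\sum_{M\gg^{-1}N}v_M\Big\|_{L^\infty}
\lesssim \|u\|_{C^r_*}K^{-r}\,\|v\|_{C^s_*},
\]
using Bernstein/the triangle inequality and that $\sum_{M\lesssim N}\|v_M\|_{L^\infty}\lesssim\|v\|_{C^s_*}\sum_{M\lesssim N}M^{-s}\lesssim\|v\|_{C^s_*}$ since $s>0$ (this is exactly the dyadic sum $\sum_{M<N}M^{-s}\lesssim N^{-s}\le 1$... more precisely $\sum_{M\le N}M^{-s}$ is bounded). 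Hence this piece is in $C^r_*$ with the claimed bound. The symmetric term $\sum_{M\gg N}u_Nv_M$ has output frequency $K\sim M$, and
\[
\left\|\mathbf P_K\Big(\textstyle\sum_{M\gg N}u_Nv_M\Big)\right\|_{L^\infty}
\lesssim \sum_{M\sim K}\|v_M\|_{L^\infty}\sum_{N\lesssim M}\|u_N\|_{L^\infty}
\lesssim \|v\|_{C^s_*}K^{-s}\sum_{N\lesssim K}N^{-r}\,\|u\|_{C^r_*}
\lesssim \|u\|_{C^r_*}\|v\|_{C^s_*}K^{-s}\,K^{-r}\lesssim K^{-r}\|u\|_{C^r_*}\|v\|_{C^s_*},
\]
where $\sum_{N\lesssim K}N^{-r}\lesssim K^{-r}$ because $r<0$ (so $-r>0$), and then $K^{-s-r}\le K^{-r}$ because $K\ge 1$ and $s>0$; this is precisely where $r<0$ is used. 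For the diagonal term, fix the output scale $K$; only $N\sim M\gtrsim K$ contribute, so
\[
\left\|\mathbf P_K\Big(\textstyle\sum_{N\sim M,\,N\gtrsim K}u_Nv_M\Big)\right\|_{L^\infty}
\lesssim \sum_{N\gtrsim K}\|u_N\|_{L^\infty}\|v_N\|_{L^\infty}
\lesssim \|u\|_{C^r_*}\|v\|_{C^s_*}\sum_{N\gtrsim K}N^{-r-s}
\lesssim \|u\|_{C^r_*}\|v\|_{C^s_*}K^{-r-s}\lesssim K^{-r}\|u\|_{C^r_*}\|v\|_{C^s_*},
\]
using the convergent dyadic tail $\sum_{N\gtrsim K}N^{-(r+s)}\lesssim K^{-(r+s)}$, which needs exactly $r+s>0$, and again $K^{-(r+s)}\le K^{-r}$. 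Taking the supremum over $K$ and recalling the definition \eqref{eq.normHolderZ} of $\|\cdot\|_{C^r_*}$ gives \eqref{est_prod_holder_distrib}; one should also note en route that the double series converges in $\mathcal S'$ so that $uv$ is a well-defined distribution, which follows from the same block estimates.

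The main obstacle — really the only delicate point — is the bookkeeping of which dyadic blocks $\mathbf P_K(u_Nv_M)$ are nonzero and ensuring the various one-sided geometric series are summed in the correct direction: $\sum_{M<N}M^{-s}$ (needs $s>0$), $\sum_{N<K}N^{-r}$ (needs $r<0$), and $\sum_{N>K}N^{-(r+s)}$ (needs $r+s>0$). Each of the three structural hypotheses $r<0$, $s>0$, $r+s>0$ is consumed by exactly one of these sums, which is a good sanity check. Everything else is a routine application of Bernstein's inequality (\cref{thm.bernstein}) and the triangle inequality in $L^\infty$; since we are working entirely in $L^\infty$ there are no Hölder-conjugate subtleties. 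I would present the proof compactly by introducing the paraproduct notation and treating the three pieces in turn, citing \cite{BCD} or \cite{GIP15}*{Lemma 2.1} for the reader who wants the same argument spelled out in more generality.
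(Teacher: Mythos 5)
Your argument is correct and is essentially the proof the paper relies on: the paper does not prove \cref{lem.holderProduct} itself, but cites \cite{GIP15}*{Lemma 2.1} and remarks that for bounded continuous functions the estimate is an easy Littlewood--Paley exercise, which is precisely the Bony-type three-regime computation you carried out. Your bookkeeping of where $s>0$, $r<0$ and $r+s>0$ enter (low-frequency sum, high-low sum, resonant tail) matches the standard argument, so nothing further is needed.
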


Notice that defining the product of two distributions is not trivial, and indeed it involves a \emph{Bony decomposition} together with \emph{paradifferential} calculus. However, we remark that we are going to use \cref{lem.holderProduct} on products of bounded and continuous functions, thus in this case $uv$ is trivially defined and the estimate \eqref{est_prod_holder_distrib} becomes an easy exercise in Littlewood--Paley analysis.

\subsection{Pseudodifferential operators and symbols}\label{s:pseudodiff_formalism}
In what follows we introduce the pseudodifferential formalism from \cite{taylorIII}. We will repeatedly use the notation $\langle \xi\rangle:=\sqrt{1+|\xi|^2}$.

\begin{definition}[Classes of symbols] Let $m\in \mathbb{R}$ and $\delta \in [0,1]$. A symbol $a\in C^{\infty}_{x,\xi}$ is in~$S^m_{1,\delta}$ if 
\[
    |\partial^{\alpha}_{\xi}\partial_x^{\beta} a(x,\xi)| \leq C(\alpha, \beta) \langle \xi \rangle ^{m - |\alpha| + \delta |\beta|}.
\]
We let $S^m \coloneqq S^m_{1,0}$ be the space of classical symbols of order $m$. 
\end{definition}
When dealing with limited regularity in the variable $x$, one has the following  generalization. 

\begin{definition}[Class $C^s_*S^m_{1,\delta}$] A symbol $a$ with regularity $C_*^s$ in $x$ is in the class $C^s_*S^m_{1,\delta}$ when 
\[
    \left\|\partial_{\xi}^{\alpha} a(\cdot,\xi)\right\|_{C_*^s} \leq C(s,\alpha) \langle \xi \rangle ^{m-|\alpha| +s\delta}.
\]
\end{definition}

\begin{definition}[Quantization of a symbol $a$] Let $a \in C^r_*S^{m}_{1,\delta}$. The \emph{quantization} of $a$ is the operator $\operatorname{Op}(a) : \mathcal{S} \to \mathcal{S}$ defined for every $u \in \mathcal{S}$ by 
\[
    \operatorname{Op}(a)u(x)=\int_{\mathbb{R}^d \times \mathbb{R}^d} e^{i\xi \cdot (x-y)}a(x,\xi)u(y)\,\mathrm{d}\xi\,\mathrm{d}y = \int_{\mathbb{R}^d}K(x,y)u(y)\,\mathrm{d}y, 
\]
where $K(x,y)=\displaystyle\int_{\mathbb{R}^d}e^{i\xi \cdot (x-y)}a(x,\xi)\,\mathrm{d}\xi$. 
\end{definition}

This quantization is such that, for example, if $a(x,\xi)=ib(x)\xi_k$ then 
\[
    \operatorname{Op}(a)u(x)=b(x)\partial_k u(x).
\] 
Other quantizations exist, but this one (which is the classical one) fits to our problem. 

\jump

It is well-known that operators in the class $\operatorname{Op}(S^m_{1,\delta})$ enjoy good continuity bounds in $W^{s,p}$ or $\mathcal{C}^s_*$ spaces,  as known from the more general Calder\'on--Vaillancourt Theorem.

\begin{theorem}[\cite{taylorIII}*{Chapter~13, Corollary 9.2}]\label{th.calderonVaillancourt} Let $m\in \mathbb{R}$, $\delta\in[0,1)$ and $a \in S^m_{1,\delta}$. Then, for any $s \in \mathbb{R}$, there holds
%\[
%    \operatorname{Op}(a) : W^{s+m,p} \to W^{s,p},    
%\]
%and 
\[
    \operatorname{Op}(a) : C_*^{s+m}\to C_*^{s}.
\]
\end{theorem}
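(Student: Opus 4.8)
The plan is to peel off the order $m$ and the value of the target exponent by two elementary reductions, so that only the boundedness of an order-zero operator between two copies of a single classical H\"older space $C^{s_0}$, $s_0\in(0,1)$, remains, and then to prove this last statement by hand with a dyadic kernel estimate. Throughout I will use that $\langle D\rangle^{t}:=\operatorname{Op}(\langle\xi\rangle^{t})$ maps $C^{\sigma+t}_*$ isomorphically onto $C^{\sigma}_*$ for all $\sigma,t\in\R$, which follows at once from the definition \eqref{eq.normHolderZ} and a Bernstein-type bound in the spirit of \cref{thm.bernstein}(ii).

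First I would reduce to $m=0$: since $\langle D\rangle^{-m}$ is a Fourier multiplier, right-composition with it is just multiplication of symbols, $\operatorname{Op}(a)\circ\langle D\rangle^{-m}=\operatorname{Op}(b)$ with $b(x,\xi):=a(x,\xi)\langle\xi\rangle^{-m}\in S^{0}_{1,\delta}$, and writing $\operatorname{Op}(a)=\operatorname{Op}(b)\circ\langle D\rangle^{m}$ it is enough to prove $\operatorname{Op}(b)\colon C^{s}_*\to C^{s}_*$ for every $b\in S^{0}_{1,\delta}$ and every $s\in\R$. Next I would reduce to a fixed $s_0\in(0,1)$: assuming the claim for that $s_0$ and given an arbitrary $s$, I set $t:=s_0-s$ and conjugate, $\langle D\rangle^{-t}\operatorname{Op}(b)\langle D\rangle^{t}=\operatorname{Op}(e)$. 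By the symbolic calculus of \cite{taylorIII} one has $\langle D\rangle^{-t}\operatorname{Op}(b)=\operatorname{Op}(c)$ with $c\sim\sum_\alpha\frac1{\alpha!}\partial_\xi^\alpha(\langle\xi\rangle^{-t})\,D_x^\alpha b$, whose $\alpha$-term has order $-t-|\alpha|(1-\delta)$; \emph{this is exactly where the hypothesis $\delta<1$ enters}, since it makes the expansion genuinely asymptotic and forces $c\in S^{-t}_{1,\delta}$, hence $e:=c\,\langle\xi\rangle^{t}\in S^{0}_{1,\delta}$ after a final symbol multiplication. Then $\operatorname{Op}(b)=\langle D\rangle^{t}\operatorname{Op}(e)\langle D\rangle^{-t}$ factors through $C^{s}_*\to C^{s_0}_*\to C^{s_0}_*\to C^{s}_*$ and we are done modulo the base case. (For $\delta=1$ the conjugate need not lie in $S^{0}_{1,1}$ and the assertion indeed fails for $s\le 0$, which is consistent with the statement.)

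For the base case $b\in S^{0}_{1,\delta}$, $s_0\in(0,1)$ — where $C^{s_0}_*=C^{s_0}$ is the ordinary H\"older space — I would split the symbol in frequency, $b=\sum_{j\ge 0}b_j$ with $b_j(x,\xi):=b(x,\xi)\mathbf P_{2^j}(\xi)$ supported in $\{|\xi|\sim 2^j\}$, and study the kernel $K_j(x,y)=\int_{\R^d}e^{i\xi\cdot(x-y)}b_j(x,\xi)\,d\xi$ of $\operatorname{Op}(b_j)$. Non-stationary phase (integrating by parts in $\xi$ and using $|\partial_x^\beta\partial_\xi^\alpha b_j|\lesssim 2^{j(\delta|\beta|-|\alpha|)}\mathbf 1_{\{|\xi|\sim 2^j\}}$) gives, for all $L>d$,
\[
    |\partial_x^\beta K_j(x,y)|\lesssim 2^{j(d+\delta|\beta|)}\bigl(1+2^j|x-y|\bigr)^{-L},
\]
so that $\sup_x\|K_j(x,\cdot)\|_{L^1}\lesssim 1$ and, via $K_j(x+h,y)-K_j(x,y)=\int_0^1 h\cdot\nabla_x K_j(x+\tau h,y)\,d\tau$, also $\sup_x\int_{\R^d}|K_j(x+h,y)-K_j(x,y)|\,dy\lesssim\min\{1,|h|\,2^{j\delta}\}$. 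Since $b_j$ is localized in the frequency paired with $\hat u$, one has $\operatorname{Op}(b_j)u=\operatorname{Op}(b_j)u_{\sim 2^j}$, and combining $\|u_{\sim 2^j}\|_{L^\infty}\lesssim 2^{-js_0}\|u\|_{C^{s_0}_*}$ with the two kernel bounds yields $\|\operatorname{Op}(b_j)u\|_{L^\infty}\lesssim 2^{-js_0}\|u\|_{C^{s_0}_*}$ and $|\operatorname{Op}(b_j)u(x+h)-\operatorname{Op}(b_j)u(x)|\lesssim\min\{1,|h|2^{j\delta}\}\,2^{-js_0}\|u\|_{C^{s_0}_*}$. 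Summing the first bound over $j\ge 0$ controls the sup norm; for the H\"older increments I would split the sum at $2^j\sim|h|^{-1}$, bounding the high-frequency part by $\sum_{2^j\gtrsim|h|^{-1}}2^{-js_0}\lesssim|h|^{s_0}$ (here $s_0>0$) and the low-frequency part by $|h|\sum_{2^j\lesssim|h|^{-1}}2^{j(\delta-s_0)}\lesssim|h|^{s_0}$ (using $\delta<1$, or simply $|h|\le|h|^{s_0}$ when $\delta\le s_0$). This gives $\|\operatorname{Op}(b)u\|_{C^{s_0}(\R^d)}\lesssim\|u\|_{C^{s_0}(\R^d)}$ and finishes the proof. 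I expect the main obstacle to be the bookkeeping in this last dyadic summation together with the non-stationary-phase kernel bounds; the one genuinely structural point is that the full range $s\in\R$ is bought in the conjugation step at the precise price of the hypothesis $\delta<1$.
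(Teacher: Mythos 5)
The paper does not prove this statement at all: it is quoted verbatim from \cite{taylorIII}*{Chapter 13, Corollary 9.2}, so there is no internal argument to compare against. Your proposal supplies a self-contained proof along the standard lines (reduce to order zero by exact right-composition with the multiplier $\langle D\rangle^{-m}$, reduce to a single exponent $s_0\in(0,1)$ by conjugation with $\langle D\rangle^{t}$ using the $S_{1,\delta}$ composition calculus, i.e.\ \cref{thm.calculus}, then prove the $C^{s_0}\to C^{s_0}$ base case by dyadic kernel estimates), and the overall architecture is sound. You also correctly isolate where $\delta<1$ is genuinely used: the conjugation step, which is what buys the full range $s\in\R$; the base case with $s_0>0$ would survive even for $\delta=1$.

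One intermediate estimate in your base case is wrong as stated, although the proof survives its correction. You claim $|\partial_x^\beta K_j(x,y)|\lesssim 2^{j(d+\delta|\beta|)}(1+2^j|x-y|)^{-L}$, but $\partial_x^\beta$ also hits the phase $e^{i\xi\cdot(x-y)}$ and brings down factors of $\xi$ with $|\xi|\sim 2^j$; the correct bound is $2^{j(d+|\beta|)}(1+2^j|x-y|)^{-L}$ (the $\delta$-gain only concerns the $x$-derivatives falling on the symbol). Consequently the mean-value argument gives $\sup_x\int|K_j(x+h,y)-K_j(x,y)|\,dy\lesssim\min\{1,2^{j}|h|\}$ rather than $\min\{1,2^{j\delta}|h|\}$. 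This is harmless: with the corrected bound the low-frequency part of your final sum becomes $|h|\sum_{2^j\lesssim|h|^{-1}}2^{j(1-s_0)}\lesssim|h|^{s_0}$, which closes because $s_0<1$, and the high-frequency part is unchanged; so the base case goes through exactly as you intended, and in fact without any reference to $\delta$ there. With that repair (and the routine remark that $\operatorname{Op}(b_j)u=\operatorname{Op}(b_j)\tilde{\mathbf P}_{2^j}u$ uses a slightly fattened Littlewood--Paley block), your argument is a correct proof of the cited mapping property, consistent with the norm \eqref{eq.normHolderZ} and the Bernstein bounds of \cref{thm.bernstein}.
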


%Such continuity results with limited regularity symbols has been studied. We will use a variant of Bourdaud's theorem adapted to operators with symbols in $C^{r}_*S^{m}_{1,\delta}$, we refer to~\cite{taylorIII}*{Pages 45-53} for more details.
%\begin{theorem}[\cite{taylorIII}*{Chapter 13, Proposition 9.10}]\label{thm.bourdaud} Let $m\in \R$, $r > 0$, $\delta\in [0,1)$ and $p\in(1,\infty)$. Let $a(x,\xi) \in C^r_*S_{1,\delta}^m$. Then for all $-(1-\delta)r <s<r$ there holds 
%\[
%    \operatorname{Op}(a) : W^{s+m,p} \to W^{s,p},    
%\]
%and 
%\[
%    \operatorname{Op}(a) : C_*^{s+m}\to C_*^{s}.
%\]
%\end{theorem}

Operators associated to symbols in the class $S^{m}_{1,\delta}$ enjoy nice composition properties.

\begin{theorem}[\cite{taylorII}*{Chapter 7,  Proposition 3.1}]\label{thm.calculus} Let $A=\operatorname{Op}(a)$ and $B=\operatorname{Op}(b)$ two pseudodifferential operators with symbols $a \in S^{m}_{1,\delta}$ and $b\in S^n_{1,\delta}$, for some $n, m \in \mathbb{R}$ and $\delta \in [0,1)$. Then, the composition $C\coloneqq A\circ B$ is a pseudodifferential operator $C=\operatorname{Op}(c)$, where $c\in S^{m+n}_{1,\delta}$. Moreover, for any integer $N \geq 0$,  it holds
\[
    c(x,\xi) -\sum_{|\alpha| \leq N} \frac{i^{|\alpha|}}{\alpha !}\partial^{\alpha}_{\xi} a(x,\xi) \partial^{\alpha}_{x}b(x,\xi) \in S^{m+n-(1-\delta)(N+1)}_{1,\delta}. 
\]
\end{theorem}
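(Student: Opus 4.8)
The plan is to follow the classical oscillatory--integral argument for the symbolic calculus: one tests the composition against plane waves to read off its symbol, and then Taylor--expands in the frequency variable. Concretely, for any symbol $a\in S^m_{1,\delta}$ the quantization satisfies $\operatorname{Op}(a)(e^{i\,\cdot\,\xi})(x)=a(x,\xi)\,e^{ix\cdot\xi}$, which follows directly from the definition of $\operatorname{Op}$ (the $y$-integral being the Fourier transform of a plane wave), justified rigorously either by a density argument with compactly supported symbols or by reading it off the Schwartz kernel $K(x,y)$. Applying this identity first to $B=\operatorname{Op}(b)$ gives $B(e^{i\,\cdot\,\xi})(x)=b(x,\xi)e^{ix\cdot\xi}$; writing $b(\cdot,\xi)$ via Fourier inversion in its first slot, $b(y,\xi)=\int_{\R^d}\widehat b(\eta,\xi)e^{iy\cdot\eta}\,\mathrm d\eta$, and then applying $A=\operatorname{Op}(a)$ and stripping off $e^{ix\cdot\xi}$, one arrives at the representation
\[
c(x,\xi)\coloneqq e^{-ix\cdot\xi}\,(A\circ B)(e^{i\,\cdot\,\xi})(x)=\int_{\R^d}e^{ix\cdot\eta}\,a(x,\xi+\eta)\,\widehat b(\eta,\xi)\,\mathrm d\eta,
\]
where $\widehat b(\cdot,\xi)$ is the partial Fourier transform of $b$ in $x$. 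This integral is not absolutely convergent, since $\widehat b(\cdot,\xi)$ is only a tempered distribution in $\eta$; one first interprets it as an oscillatory integral, integrating by parts in $\eta$ against the factor $e^{ix\cdot\eta}$ to gain decay, which simultaneously shows that $c$ is smooth in $(x,\xi)$.

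The main term is extracted by Taylor--expanding $a(x,\xi+\eta)$ in the second variable about $\xi$,
\[
a(x,\xi+\eta)=\sum_{|\alpha|\leq N}\frac{\eta^{\alpha}}{\alpha!}\,\partial^{\alpha}_{\xi}a(x,\xi)\;+\;(N+1)\sum_{|\alpha|=N+1}\frac{\eta^{\alpha}}{\alpha!}\int_0^1(1-t)^N\,\partial^{\alpha}_{\xi}a(x,\xi+t\eta)\,\mathrm dt,
\]
and then using that $\eta^{\alpha}e^{ix\cdot\eta}$ equals, up to the explicit constant appearing in the statement (which depends only on the Fourier normalization), the derivative $\partial^{\alpha}_x e^{ix\cdot\eta}$, so that each polynomial term, inserted under the $\eta$--integral against $\widehat b(\eta,\xi)$, produces exactly $\frac{i^{|\alpha|}}{\alpha!}\partial^{\alpha}_{\xi}a(x,\xi)\,\partial^{\alpha}_x b(x,\xi)$. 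Leibniz' rule together with the symbol bounds for $a$ and $b$ shows that the $\alpha$--th summand lies in $S^{m+n-(1-\delta)|\alpha|}_{1,\delta}$: differentiating it costs at most $\delta(|\beta|+|\alpha|)$ powers of $\langle\xi\rangle$ from the $x$--derivatives and $-|\alpha'|$ from the $\xi$--derivatives, and the fixed loss $-(1-\delta)|\alpha|$ comes from having spent $|\alpha|$ $\xi$--derivatives against only $|\alpha|$ $x$--derivatives. In particular each term of order $|\alpha|\geq 1$ is strictly lower order than the leading one, and the whole finite sum belongs to $S^{m+n}_{1,\delta}$.

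The heart of the matter is the remainder estimate, which is where I expect the real work to lie: one must show that the operator with symbol
\[
r_N(x,\xi)=c_{N+1}\sum_{|\alpha|=N+1}\frac{1}{\alpha!}\int_{\R^d}\int_0^1 e^{ix\cdot\eta}(1-t)^N\,\partial^{\alpha}_{\xi}a(x,\xi+t\eta)\,\eta^{\alpha}\,\widehat b(\eta,\xi)\,\mathrm dt\,\mathrm d\eta
\]
belongs to $S^{m+n-(1-\delta)(N+1)}_{1,\delta}$. The argument proceeds by a dyadic Littlewood--Paley decomposition in $\eta$. On the range $|\eta|\lesssim\langle\xi\rangle$ one has $\langle\xi+t\eta\rangle\sim\langle\xi\rangle$, hence $\partial^{\alpha}_{\xi}a(x,\xi+t\eta)$ contributes $\langle\xi\rangle^{m-N-1}$, while $\eta^{\alpha}\widehat b(\eta,\xi)$ — being the Fourier transform of $\partial^{N+1}_x b(\cdot,\xi)$ — is controlled, after a further integration by parts in $\eta$, using $b\in S^n_{1,\delta}$; summing the dyadic pieces yields $\langle\xi\rangle^{n+\delta(N+1)}$, and combined with the gain $\langle\xi\rangle^{-(N+1)}$ this gives precisely $\langle\xi\rangle^{m+n-(1-\delta)(N+1)}$. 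On the complementary range $|\eta|\gtrsim\langle\xi\rangle$ one integrates by parts in $\eta$ arbitrarily many times: each integration produces an extra negative power of $|\eta|$ at the cost of only $\langle\xi\rangle^{\delta}$, so the rapid decay of $\widehat b(\cdot,\xi)$ beats the polynomial growth $\langle\xi+t\eta\rangle^{m}$ of $\partial^{\alpha}_{\xi}a$ and the contribution is $O(\langle\xi\rangle^{-M})$ for every $M$. Differentiating the integral in $x$ and $\xi$ and repeating the same bookkeeping gives the full family of symbol estimates for $r_N$. It is exactly here that the hypothesis $\delta<1$ is indispensable: the $\eta$--localization scale $\langle\xi\rangle^{\delta}$ must be strictly smaller than the base scale $\langle\xi\rangle$ for the dyadic sums to converge with the asserted gain. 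Finally, since $c-\sum_{|\alpha|\leq N}\frac{i^{|\alpha|}}{\alpha!}\partial^{\alpha}_{\xi}a\,\partial^{\alpha}_x b=r_N$ for every $N$ and $\operatorname{ord}(r_N)\to-\infty$, the function $c$ is a genuine element of $S^{m+n}_{1,\delta}$ and $\operatorname{Op}(c)=A\circ B$ modulo a smoothing operator, which is absorbed into the asymptotic expansion; this completes the proof.
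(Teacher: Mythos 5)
The paper does not prove this statement at all: it is quoted verbatim from the literature (Taylor, \emph{PDE II}, Chapter 7, Proposition 3.1), so the only meaningful comparison is with the classical proof, and your outline is indeed that classical oscillatory-integral argument — derive $c(x,\xi)=\int e^{ix\cdot\eta}a(x,\xi+\eta)\widehat b(\eta,\xi)\,\mathrm d\eta$, Taylor-expand $a$ in the second slot, read off the terms $\partial_\xi^\alpha a\,\partial_x^\alpha b$ (note that with the paper's quantization the constant is really $i^{-|\alpha|}=(-i)^{|\alpha|}$, i.e.\ $\tfrac{i^{|\alpha|}}{\alpha!}D_\xi^\alpha a\,D_x^\alpha b$ in Taylor's notation; this does not affect any symbol-class assertion), and your order bookkeeping $\partial_\xi^\alpha a\in S^{m-|\alpha|}_{1,\delta}$, $\partial_x^\alpha b\in S^{n+\delta|\alpha|}_{1,\delta}$ for the finite sum is correct.

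There is, however, a genuine gap in your treatment of the remainder on the region $|\eta|\gtrsim\langle\xi\rangle$. You invoke ``the rapid decay of $\widehat b(\cdot,\xi)$'' and claim that ``each integration by parts in $\eta$ produces an extra negative power of $|\eta|$.'' Neither is true: as you yourself note at the start, $\widehat b(\cdot,\xi)$ is only a tempered distribution in $\eta$ (for $b(x,\xi)=\sin(x_1)\beta(\xi)$ it is a pair of Dirac masses), so it has no decay whatsoever; and integrating by parts in $\eta$ against the phase $e^{ix\cdot\eta}$ produces factors of $1/x_k$, not of $1/|\eta|$, since the $\eta$-gradient of the phase is $x$ — so this step, as written, fails. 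The decay in $\eta$ must instead come from the smoothness of $b$ in $x$: either keep the double oscillatory integral $c(x,\xi)=\iint e^{i(x-y)\cdot\eta}a(x,\xi+\eta)b(y,\xi)\,\mathrm dy\,\mathrm d\eta$ and integrate by parts in $y$, where $(1-\Delta_y)^M e^{i(x-y)\cdot\eta}=\langle\eta\rangle^{2M}e^{i(x-y)\cdot\eta}$ lets you trade a gain of $\langle\eta\rangle^{-2M}$ against a loss of $\langle\xi\rangle^{2M\delta}$ from the $x$-derivatives of $b$ (this is exactly where $\delta<1$ enters, as you correctly sense), together with integrations by parts in $\eta$ to produce the $\langle x-y\rangle^{-2M'}$ factors needed for convergence in $y$; or run the Littlewood--Paley decomposition of $b$ in its $x$-frequency — which you already use in the region $|\eta|\lesssim\langle\xi\rangle$ — uniformly in both regions, bounding each dyadic piece via Bernstein as in the paper's own analysis of the symbol $e^\sharp_{1,i}$. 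A minor further point: the last sentence about $\operatorname{Op}(c)$ agreeing with $A\circ B$ ``modulo a smoothing operator absorbed into the asymptotic expansion'' is off — in this theorem $c$ is exactly the symbol of $A\circ B$ (your derivation via Fourier inversion already gives this), and $c\in S^{m+n}_{1,\delta}$ follows from the case $N=0$ of the expansion once the remainder estimate is in place, not from summing an asymptotic series.
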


We will need to invert elliptic operators. In our context we will use the following definition of elliptic operators.
%\mickael{Here I need to decide if I change the usual theorem (so I state something about locally inversion of an elliptic op in a local region of space) or if we are going to work with the usual one.}

\begin{definition}[Elliptic symbol on a compact set] We say that a symbol $a \in S^m_{1,\delta}$ is elliptic on a compact set $K \subset \mathbb{R}^d$ if there exist $R>0$ and $c>0$ such that, for all $|\xi|\geq R$ and all $x \in K$, there holds
\[
    |a(x,\xi)| \geq c \langle \xi \rangle ^{m}.
\]
\end{definition}

With this definition, an elliptic operator can be (locally) inverted modulo smooth functions.

\begin{theorem}[Elliptic operator inversion,  \cite{taylor}*{Section~0.4}]\label{thm.elliptic} Let $m>0$ and $\delta \in [0,1)$ and let  $A \in \operatorname{Op}(S^{m}_{1,\delta})$  be an elliptic operator on a compact set $K$. Then, there exist $B \in \operatorname{Op}(S^{-m}_{1,\delta})$ and $C \in \operatorname{Op}(S_{1,\delta}^{-2(1-\delta)})$ such that 
\[
    B\circ A=\operatorname{Op}(\chi(x)) + C,
\]
for some localization function $\chi$ supported in $K$
\end{theorem}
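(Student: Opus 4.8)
The plan is the classical parametrix construction by successive approximation, using the composition calculus of \cref{thm.calculus} to keep track of the orders of the error terms.

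First I would fix cutoffs: a function $\chi \in C_c^\infty(\R^d)$ supported in (the interior of) $K$, and a radial $\psi \in C^\infty(\R^d)$ with $\psi(\xi)=0$ for $|\xi|\leq R$ and $\psi(\xi)=1$ for $|\xi|\geq 2R$, where $R$ is the ellipticity radius. On the set $\{\chi(x)\psi(\xi)\neq 0\}$ one has $|a(x,\xi)|\geq c\langle\xi\rangle^{m}$, so the reciprocal of $a$ is well defined there, and I set
\[
b_0(x,\xi):=\frac{\chi(x)\psi(\xi)}{a(x,\xi)}.
\]
A Leibniz/Fa\`a di Bruno computation gives $b_0\in S^{-m}_{1,\delta}$: writing $\partial_\xi^\alpha\partial_x^\beta(1/a)$ as a sum of terms of the form (product of derivatives of $a$)$/a^{k+1}$, each $\xi$-derivative gains a factor $\langle\xi\rangle^{-1}$ and each $x$-derivative costs $\langle\xi\rangle^{\delta}$, so the total is $\lesssim\langle\xi\rangle^{-m-|\alpha|+\delta|\beta|}$, and multiplication by the smooth cutoffs preserves the class. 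Since $b_0$ has compact support in $x$, pseudolocality of the symbol calculus ensures the full composition symbols below also have $x$-support in $K$, hence all error symbols are supported where $a$ is elliptic.

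Next I would apply \cref{thm.calculus} with $N=0$ to $\operatorname{Op}(b_0)\circ A$: the leading term is $b_0a=\chi(x)\psi(\xi)$ and the remainder lies in $S^{-(1-\delta)}_{1,\delta}$, so
\[
\operatorname{Op}(b_0)\circ A=\operatorname{Op}(\chi\psi)+\operatorname{Op}(e_1),\qquad e_1\in S^{-(1-\delta)}_{1,\delta}.
\]
Because $\delta<1$ we have $1-\delta>0$, so the remainder has strictly lower order and a single correction step reaches order $-2(1-\delta)$. I define the corrector $b_1:=-e_1\psi/a\in S^{-m-(1-\delta)}_{1,\delta}\subset S^{-m}_{1,\delta}$ (again by the reciprocal-of-elliptic-symbol estimate, valid since $e_1\psi$ is supported where $a$ is elliptic), and apply \cref{thm.calculus} once more to get $\operatorname{Op}(b_1)\circ A=\operatorname{Op}(b_1a)+\operatorname{Op}(e_2)=-\operatorname{Op}(e_1\psi)+\operatorname{Op}(e_2)$ with $e_2\in S^{-2(1-\delta)}_{1,\delta}$.

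Finally I would set $B:=\operatorname{Op}(b_0+b_1)\in\operatorname{Op}(S^{-m}_{1,\delta})$ and collect the terms:
\[
B\circ A=\operatorname{Op}(\chi\psi)+\operatorname{Op}(e_1)-\operatorname{Op}(e_1\psi)+\operatorname{Op}(e_2)=\operatorname{Op}(\chi)+C,
\]
where $C=-\operatorname{Op}\big(\chi(1-\psi)\big)+\operatorname{Op}\big(e_1(1-\psi)\big)+\operatorname{Op}(e_2)$. The first two terms have symbols with compact $\xi$-support, hence are smoothing operators, i.e.\ belong to $\operatorname{Op}(S^{-N}_{1,\delta})$ for every $N$, in particular to $\operatorname{Op}(S^{-2(1-\delta)}_{1,\delta})$; the third already does. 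Thus $C\in\operatorname{Op}(S^{-2(1-\delta)}_{1,\delta})$, which concludes. The only nonroutine point is the bookkeeping: verifying that the reciprocal of an elliptic symbol is again a symbol of the opposite order (the Fa\`a di Bruno estimate above) and checking, via the compact $x$-support guaranteed by pseudolocality, that the two applications of the composition theorem land every error term in $S^{-2(1-\delta)}_{1,\delta}$ — this is precisely where the hypothesis $\delta<1$ is used.
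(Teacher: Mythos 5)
Your proposal is correct and follows essentially the same route as the paper: the two-term parametrix $B=\operatorname{Op}(b_0+b_1)$ with leading symbol $b_0=\chi\psi/a\in S^{-m}_{1,\delta}$, a single corrector to push the error down to order $-2(1-\delta)$, and absorption of the compactly $\xi$-supported pieces $\chi(1-\psi)$, $e_1(1-\psi)$ as smoothing terms. The one place you diverge is the choice of corrector: the paper applies \cref{thm.calculus} with $N=1$ and cancels the \emph{explicit} subprincipal term $i\nabla_\xi b_{-m}\cdot\nabla_x a$, which manifestly carries the factor $\chi(x)$, so dividing by $a$ only ever happens on $\operatorname{supp}\chi\times\{\psi\neq 0\}$ where ellipticity holds; you instead use $N=0$ and set $b_1=-e_1\psi/a$ with $e_1$ the \emph{abstract} remainder, which forces you to know that $e_1$ keeps its $x$-support in $\operatorname{supp}\chi$. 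That fact is true, but it is not contained in \cref{thm.calculus} as stated, and ``pseudolocality'' is not quite the right name for it: the correct reason is that in the quantization used here the symbol of $\operatorname{Op}(b_0)\circ\operatorname{Op}(a)$ is $c(x,\xi)=(2\pi)^{-d}\iint e^{i(x-y)\cdot(\eta-\xi)}\,b_0(x,\eta)\,a(y,\xi)\,dy\,d\eta$, in which the left symbol is evaluated at the same base point $x$, so $c(x,\cdot)\equiv 0$ (hence $e_1(x,\cdot)=c(x,\cdot)-b_0(x,\cdot)a(x,\cdot)\equiv 0$) whenever $b_0(x,\cdot)\equiv 0$. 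With that justification supplied, your bookkeeping of orders ($e_1\in S^{-(1-\delta)}_{1,\delta}$, $b_1\in S^{-m-(1-\delta)}_{1,\delta}$, $e_2\in S^{-2(1-\delta)}_{1,\delta}$) and the reciprocal-symbol estimates are exactly as in the paper, so the argument is complete; the paper's $N=1$ variant simply buys you freedom from this support discussion at the cost of writing out one term of the expansion.
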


\begin{proof} We provide a proof for the sake of completeness. Note that this result is a variant of the usual proof of inversion of elliptic operators modulo smooth functions~\cite{taylorII}*{Chapter 7.4}. 

We seek an operator $B=\operatorname{Op}(b)$, where $b=b_{-m}+b_{-m-(1-\delta)}$ for some $b_{-m-(1-\delta)}\in S^{-m-(1-\delta)}_{1,\delta}$ to be found. The first term $b_{-m}$ is actually easy to identify. In order to write it, let us introduce  some cutoff function $\chi$ supported in $K$. Also, let $\psi$ be a cutoff function which takes values $1$ for $|\xi| \geq 2R$ and which is $0$ for $|\xi|\leq R$. Let us set
\[
    b_{-m}(x,\xi)=\frac{\chi(x)\psi(\xi)}{a(x,\xi)},
\]
which, thanks to our assumptions on $a$ and the chain rule, is a symbol in $S^{-m}_{1,\delta}$.
Observe that $b_{-m}(x,\xi)a(x,\xi)=\chi(x)\psi(\xi)$, so \cref{thm.calculus} applied for $N=1$ to $\operatorname{Op}(b_{-m})$ and $\operatorname{Op}(a)$ gives 
\[
    \operatorname{Op}(b_{-m}) \circ \operatorname{Op}(a) = \operatorname{Op}(\chi(x)\psi(\xi)) + \operatorname{Op}(i\nabla_{\xi}b_{-m}\cdot\nabla_x a) + C_0,
\]
where $C_0 \in \operatorname{Op}\left(S_{1,\delta}^{-2(1-\delta)}\right)$. 

Now observe that, again by applying \cref{thm.calculus} with $N=0$ to $\operatorname{Op}(b_{-m-(1-\delta)}) \circ \operatorname{Op}(a)$, there exists $C_1 \in \operatorname{Op}\left(S_{1,\delta}^{-2(1-\delta)}\right)$ such that 
\begin{align*}
    \operatorname{Op}(b) \circ \operatorname{Op}(a) &= \operatorname{Op}(\chi(x)\psi(\xi)) + \operatorname{Op}(i\nabla_{\xi}b_{-m}\cdot\nabla_x a) + C_0  + \operatorname{Op}(b_{-m-(1-\delta)}a) + C_1 \\
    & = \operatorname{Op}(\chi(x)\psi(\xi)) + C_0 + C_1,
\end{align*}
provided that we take $b_{-m-(1-\delta)}\coloneqq - \frac{i}{a}\nabla_{\xi}b_{-m}\cdot \nabla_xa$, which is a symbol in $S_{1,\delta}^{-m-(1-\delta)}$.

Let us finally observe that $\chi(x)\psi(\xi)=\chi(x) + \chi(x)(\psi(\xi)-1)$ and that $c_2 \coloneqq \chi(x)(\psi(\xi)-1)$ belongs to any $S^{-M}_{1,\delta}$ class ($M>0$), by choosing $\psi$ appropriately. We can thus write 
\[
    \operatorname{Op}(b) \circ \operatorname{Op}(a) = \operatorname{Op}(\chi(x)) + C,
\]
where $C=C_0+C_1+\operatorname{Op}(c_2)$ as wanted. 
%The proof now proceeds inductively to define the symbols $b_{-m-(1-\delta)N}$ and we refer to~\cite{taylorII}*{Chapter 7.4} for details. 
\end{proof}

\subsection{Reduction to smooth functions} Since it will be very convenient to work with a $C^1(\overline \Omega)$ vector field $u$, and thus to justify all our computations in the classical sense, let us consider $u_{\varepsilon}$, the smooth regularization of $u$ given by \cite{DLS22}*{Lemma 2.1 and Remark 2.3}, that we recall here.

\begin{lemma}[Velocity approximation]\label{l:approx}
Let $d\ge2$ and let $\Omega\subset\R^d$ be a bounded and simply connected domain of class $C^{2,\delta}$, for some $\delta\in (0,1)$. Let $\gamma \in (0,1)$ and let $u\in C^{\gamma}(\Omega)$ be such that $\diver  u=0$ and $u\cdot n\vert_{\partial\Omega}=0$.  Then, there exists a family $(u_{\varepsilon})_{\eps>0} \subset C^{\infty}(\Omega)\cap C^{1,\delta}(\Omega)$ such that
$u_{\varepsilon}\to u\ \text{in}\ C^0(\overline \Omega)$
as $\eps\to 0^+$,
$\diver   u_{\varepsilon}=0$ and $u_{\varepsilon}\cdot n \vert_{\partial \Omega}=0
$ for all $\eps>0$, and
\begin{equation*}
\sup_{\eps>0}\|u_{\varepsilon}\|_{C^{\gamma}(\Omega)} \leq C \|u\|_{C^{\gamma}(\Omega)}
\end{equation*} 
for some constant $C>0$.
\end{lemma}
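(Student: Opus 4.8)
The statement is a routine regularization result; the only genuine issue is to build smooth approximants that \emph{exactly} satisfy the two structural constraints $\diver u_{\varepsilon}=0$ and $u_{\varepsilon}\cdot n|_{\partial\Omega}=0$ while keeping a $\gamma$-H\"older bound uniform in $\varepsilon$. The plan is to mollify $u$ and then apply the Leray--Helmholtz projector of $\Omega$, which restores both constraints at once. First I would extend $u$ to a compactly supported field $\bar u\in C^{\gamma}(\R^d)$ with $\|\bar u\|_{C^{\gamma}(\R^d)}\lesssim\|u\|_{C^{\gamma}(\Omega)}$ --- possible since a $C^{2,\delta}$ domain is Lipschitz --- and set $\bar u_{\varepsilon}:=\bar u*\rho_{\varepsilon}\in C^{\infty}(\R^d)$ for a standard mollifier $\rho_{\varepsilon}$. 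Classical estimates give $\|\bar u_{\varepsilon}\|_{C^{\gamma}(\R^d)}\lesssim\|\bar u\|_{C^{\gamma}(\R^d)}$ uniformly, together with $\bar u_{\varepsilon}\to\bar u$ in $C^{\gamma'}(\R^d)$ for every $\gamma'<\gamma$ (interpolating the $O(\varepsilon^{\gamma})$ decay in $C^{0}$ against the uniform $C^{\gamma}$ bound). In general $\bar u_{\varepsilon}$ is neither divergence-free nor tangent to $\partial\Omega$.

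Let $\mathbb{P}_{\Omega}$ be the Leray projector on $\Omega$, i.e.\ $\mathbb{P}_{\Omega}w=w-\nabla q$ where $q$ is the zero-average solution of $\Delta q=\diver w$ in $\Omega$, $\partial_{n}q=w\cdot n$ on $\partial\Omega$, and put $u_{\varepsilon}:=\mathbb{P}_{\Omega}(\bar u_{\varepsilon}|_{\Omega})$. By construction $\diver u_{\varepsilon}=0$ and $u_{\varepsilon}\cdot n|_{\partial\Omega}=0$. Since $u$ is itself divergence-free and tangent and $\Omega$ is connected, uniqueness for the Neumann Laplacian gives $\mathbb{P}_{\Omega}u=u$, so $u_{\varepsilon}-u=\mathbb{P}_{\Omega}(\bar u_{\varepsilon}|_{\Omega}-u)$. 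Boundary Schauder estimates for the Neumann problem on a $C^{2,\delta}$ (in fact $C^{1,\gamma}$ would suffice) domain show that $\mathbb{P}_{\Omega}$ is bounded on $C^{s}(\Omega)$ for each $s\in(0,1)$ with constant depending only on $\Omega$; hence $\|u_{\varepsilon}\|_{C^{\gamma}(\Omega)}\lesssim\|\bar u_{\varepsilon}\|_{C^{\gamma}(\R^d)}\lesssim\|u\|_{C^{\gamma}(\Omega)}$ uniformly, while $\|u_{\varepsilon}-u\|_{C^{\gamma'}(\Omega)}\lesssim\|\bar u_{\varepsilon}|_{\Omega}-u\|_{C^{\gamma'}(\Omega)}\to0$, which in particular gives $u_{\varepsilon}\to u$ in $C^{0}(\overline\Omega)$. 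For smoothness, write $u_{\varepsilon}=\bar u_{\varepsilon}|_{\Omega}-\nabla q_{\varepsilon}$: the datum $\diver\bar u_{\varepsilon}$ is smooth on $\overline\Omega$ and $\bar u_{\varepsilon}\cdot n\in C^{1,\delta}(\partial\Omega)$ (as $n\in C^{1,\delta}$ on a $C^{2,\delta}$ boundary and $\bar u_{\varepsilon}$ is smooth), so elliptic theory yields $q_{\varepsilon}\in C^{2,\delta}(\overline\Omega)\cap C^{\infty}(\Omega)$, hence $u_{\varepsilon}\in C^{1,\delta}(\overline\Omega)\cap C^{\infty}(\Omega)$.

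The only delicate point --- and the only place the regularity of $\partial\Omega$ is used --- is the mapping properties of $\mathbb{P}_{\Omega}$ just invoked: boundedness on $C^{\gamma}$ and the gain $C^{\infty}\mapsto C^{1,\delta}(\overline\Omega)$, both of which are boundary Schauder estimates for the Neumann Laplacian and are comfortably covered by $\partial\Omega\in C^{2,\delta}$ (indeed these are essentially the estimates the rest of the paper relies on for the pressure itself). When $d=2$ one may avoid the projector altogether: since $\Omega$ is simply connected, write $u=\nabla^{\perp}\psi$ with $\psi\in C^{1,\gamma}(\overline\Omega)$ and $\psi|_{\partial\Omega}=0$, mollify an extension of $\psi$, subtract from it the harmonic function carrying the mollified boundary values to recover $\psi_{\varepsilon}|_{\partial\Omega}=0$, and set $u_{\varepsilon}=\nabla^{\perp}\psi_{\varepsilon}$; this correction is uniformly bounded in $C^{1,\gamma}$ and vanishes in $C^{1,\gamma'}$ by the same Schauder estimates. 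Either route makes the approximation transparent and involves no essential obstacle beyond the bookkeeping above.
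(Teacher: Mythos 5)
You should note that the paper does not actually prove this lemma: it is imported verbatim from \cite{DLS22}*{Lemma 2.1 and Remark 2.3}, and your construction — mollify a H\"older extension of $u$, then restore both constraints at once through the Helmholtz--Leray correction $u_\eps=\bar u_\eps-\nabla q_\eps$ with $q_\eps$ solving a Neumann problem — is correct and is essentially the same regularization strategy as in that cited proof (extension, mollification, and a gradient correction controlled by boundary Schauder estimates). The one genuinely nontrivial ingredient is the one you flag: the bound $\|\nabla q\|_{C^{\gamma}(\Omega)}\lesssim\|w\|_{C^{\gamma}(\Omega)}$ for the weak Neumann problem with divergence-form data is \emph{not} the textbook non-divergence Schauder estimate (the right-hand side $\diver w$ is only a distribution), but it does hold on $C^{1,\gamma}$, hence on $C^{2,\delta}$, domains — see e.g.\ \cite{V22} — so your invocation is legitimate provided you cite an estimate of this conormal/divergence-form type; the $C^{2,\delta}$ Schauder estimate with $C^{1,\delta}$ Neumann datum that you use for the qualitative $C^{1,\delta}(\Omega)\cap C^\infty(\Omega)$ regularity of each $u_\eps$ is standard. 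A small simplification, consistent with the cited proof and with the extension already used in the proof of \cref{t:main_interior}, is to take the extension divergence-free via \cite{KMPT00}*{Section 5}: then $q_\eps$ is harmonic and only the classical Schauder estimate with $C^{\gamma}$ Neumann boundary datum is needed. Your two-dimensional stream-function alternative is also fine, though it buys nothing beyond the general argument.
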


The proof of \cref{thm.main} relies on the following

\begin{theorem}\label{t:main_approx} Let $\gamma \in (0,\frac{1}{2}]$ and $\Omega \subset \mathbb{R}^d$ be a simply connected bounded domain of class $C^{2,1}$. For $\delta\in (0,1)$ let $u \in C^{1,\delta}( \Omega)\cap C^\infty (\Omega)$ be a divergence-free vector field such that $u\cdot n|_{\partial\Omega}=0$.  Then there exists  a unique zero-average solution $p\in C^{1,\delta}(\Omega)$ to~\eqref{eq.pressureEq} such that
\begin{equation}\label{p_contin_est_smooth}
    \|p\|_{C^{2\gamma}_*(\Omega)} \leq C \left( \|u\|_{C^{\gamma}(\Omega)}^2 +\|p\|_{L^\infty(\Omega)}\right),
\end{equation}
for some constant $C>0$ which depends on $\Omega$ and $\gamma$ only.
\end{theorem}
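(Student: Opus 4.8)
The plan is to reduce the problem on $\Omega$ to two separate estimates: an interior estimate, which follows from the corresponding $C^{2\gamma}_*$ bound on all of $\R^d$ (Theorem on the whole space, which the excerpt calls \texttt{t:double\_Rd\_whole\_range}), and a local boundary estimate obtained after passing to normal geodesic coordinates. First I would fix a point $x_0\in\partial\Omega$ and use \cref{prop.geometry} to pass to the coordinates $(r,\theta)$ in a tubular neighborhood $U$ of $x_0$; by \eqref{eq.geodesicLaplace} and \eqref{eq.doubleDiv} the pressure equation becomes $-\frac1G\partial_i(Gg^{ij}\partial_j p)=\frac1G\partial^2_{ij}(Gu^iu^j)$ in $U$, with Neumann condition $\partial_r p(0,\theta)=0$ because $u\cdot n|_{\partial\Omega}=0$ translates to $u^r(0,\theta)=0$. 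The key structural point—exactly as in the flat case—is that the divergence-free condition $\diver u=0$, i.e.\ $\partial_i(Gu^i)=0$ in these coordinates, lets one rewrite the right-hand side $\frac1G\partial^2_{ij}(Gu^iu^j)$ as a \emph{single} derivative of a $C^\gamma$ quantity (schematically $\partial_i\big(u^i\,\partial_j(\cdots)\big)$ type manipulations, picking up lower-order terms from derivatives landing on $G$ and $g^{ij}$, which are only Lipschitz, hence the $C^{2,1}$ hypothesis), so that morally $-\Delta p$ behaves like $\partial(C^\gamma)$ rather than $\partial^2(C^\gamma)$, buying one full derivative of regularity after inverting the Laplacian.

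Next I would localize: multiply by a cutoff $\chi$ supported in $U$, even-reflect across $\{r=0\}$ (the Neumann condition $\partial_r p(0,\theta)=0$ and the vanishing $u^r(0,\theta)=0$ make the reflected equation consistent, with the reflected metric coefficients still Lipschitz), and extend everything to $\R^d$. The localized operator $A=-\frac1G\partial_i(Gg^{ij}\partial_j\cdot)$ is, by (2) of \cref{prop.geometry}, uniformly elliptic with Lipschitz (hence in particular $C^1_*$) coefficients, so it is the quantization of a symbol in $C^1_*S^2_{1,1}$ up to acceptable errors; after freezing $\delta$ slightly below $1$ one applies \cref{thm.elliptic} to produce a parametrix $B\in\operatorname{Op}(S^{-2}_{1,\delta})$ with $BA=\operatorname{Op}(\chi)+C$, $C\in\operatorname{Op}(S^{-2(1-\delta)}_{1,\delta})$. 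Applying $B$ to the equation, using \cref{th.calderonVaillancourt} to map $B:C^{s}_*\to C^{s+2}_*$ and absorbing the smoothing remainder $C(\chi p)$ via $\|p\|_{L^\infty}$, one gets $\chi p\in C^{2\gamma}_*$ controlled by the $C^{2\gamma-2}_*$ norm of the right-hand side. Since the right-hand side is (a single derivative of) a product of $C^\gamma$ functions, \cref{lem.holderProduct} and Bernstein's estimates (\cref{thm.bernstein}) control it by $\|u\|_{C^\gamma}^2$; the borderline $\gamma=\tfrac12$ target $C^1_*$ is reached precisely because the Littlewood--Paley norm \eqref{eq.normHolderZ} is insensitive to the usual failure of Schauder estimates at integer exponents. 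Summing the finitely many boundary charts and combining with the interior estimate (from the whole-space theorem, applied to a compactly-supported modification of $u$) yields \eqref{p_contin_est_smooth}; uniqueness of the zero-average solution and $p\in C^{1,\delta}(\Omega)$ follow from classical elliptic theory since $u\in C^{1,\delta}\cap C^\infty$.

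The main obstacle I expect is the bookkeeping at the boundary: verifying that the even reflection is compatible with the modified (variable-coefficient) equation, that the reflected coefficients retain enough regularity ($C^{0,1}$, which is why $C^{2,1}$ is sharp here), and—most delicately—that the crucial ``divergence-free $\Rightarrow$ one free derivative'' cancellation survives the passage to curvilinear coordinates, where $\diver u=0$ reads $\partial_i(Gu^i)=0$ and the extra factors of $G$ and $g^{ij}$ generate commutator terms that must themselves be shown to be only one derivative below $C^\gamma$. Making this rewriting precise, and then tracking it through the paradifferential product estimate so that nothing worse than $\|u\|_{C^\gamma}^2$ appears, is the technical heart of the argument; everything else is a fairly standard parametrix-plus-Littlewood--Paley computation.
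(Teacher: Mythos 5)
Your skeleton (interior estimate from the whole-space theorem, boundary estimate via normal geodesic coordinates, even/odd reflection across $\{r=0\}$, parametrix plus Littlewood--Paley) is indeed the paper's, but two of your key steps fail as stated. First, the reflection step: a solution of \eqref{eq.pressureEq} satisfies $\partial_n p=u\otimes u:\nabla n$ on $\partial\Omega$, which is the second fundamental form contracted with $u\otimes u$ and is in general nonzero; the hypothesis $u\cdot n|_{\partial\Omega}=0$ gives $u^r(0,\theta)=0$ but says nothing about $\partial_r p(0,\theta)$, so your assertion that the transformed problem carries the homogeneous Neumann condition $\partial_r p(0,\theta)=0$ is false. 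If you evenly reflect $p$ with a nonvanishing Neumann trace, $\partial_r\tilde p$ jumps across $\{r=0\}$ and the extended equation picks up a surface Dirac term, so the pseudodifferential argument is applied to the wrong equation. The paper first removes the boundary datum (\cref{l:remove_boundary}): one subtracts an auxiliary $\psi\in C^{1,\beta}(\Omega)$ solving $\Delta\psi=A$, $\partial_n\psi=u\otimes u:\nabla n$, with $\|\psi\|_{C^{1,\beta}}\lesssim\|u\|^2_{C^\gamma}$, and only then reflects the reduced problem \eqref{eq.pressureEqNoBoundary}, for which $\partial_r p(0,\theta)=0$ genuinely holds.

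Second, the mechanism you propose for the double gain would not deliver $2\gamma$. Rewriting $\partial^2_{ij}(Gu^iu^j)=\partial_j\bigl(Gu^i\partial_iu^j\bigr)$ gives a single derivative of a product of $u\in C^\gamma$ with $\nabla u\in C^{\gamma-1}_*$, not of a ``$C^\gamma$ quantity''; for $\gamma\le\frac12$ the exponents sum to $2\gamma-1\le0$, so \cref{lem.holderProduct} does not apply, and even where it does this route only places the right-hand side in $C^{\gamma-2}_*$, i.e.\ it recovers single regularity $C^\gamma$ after inverting the second-order operator. The actual gain needs a frequency case split: for comparable frequencies $K\sim M$ one must \emph{keep} the double-divergence form (so that $A_{-2}\partial^2_{ij}$ is order zero and $\|v_Kv_M\|_{L^\infty}\lesssim M^{-2\gamma}$ sums), and only for disparate frequencies does the divergence-free rewriting pay off, by shifting one derivative onto the low-frequency factor ($K^{1-\gamma}M^{1-\gamma}$ instead of putting both derivatives at the high frequency). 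This is exactly the $I_N/J_N$ split in \cref{t:double_Rd_whole_range} and, at the boundary, \cref{lem.rewriting_new} together with the terms $A_{11},A_{12},A_{21},A_{22},A_{23}$ of \cref{sec:termA_new}, where the Lipschitz factor $a=1/G$ must be carried along. Relatedly, \cref{thm.elliptic} inverts symbols smooth in $x$, while your operator has only Lipschitz coefficients: one must first perform the symbol smoothing $e=e^\sharp+e^\flat$ as in \eqref{eq.sharpE2_new}, invert the sharp part, and separately estimate and absorb the contribution of the flat part (the paper's term $B$ in \cref{sec:termB_new}, closed by interpolation and an absorption argument); labelling these ``acceptable errors'' skips a step that genuinely has to be proved.
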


Thanks to the above result, the proof of \cref{thm.main} follows by standard tools in analysis. We give the details for the reader's convenience.
\begin{proof}[Proof of \cref{thm.main}]
We divide the proof in two steps: we first eliminate the term $\|p\|_{L^\infty(\Omega)}$ from the right-hand side of \eqref{p_contin_est_smooth}, and then we show how such a quantitative continuity estimate leads to the existence of solutions to \eqref{p_weaksol} for any $u\in C^\gamma(\Omega)$ not necessarily smooth.

\smallskip

\textit{Step 1}.
We prove that, if $u \in C^{1,\delta}( \Omega)\cap C^\infty (\Omega)$, then the unique zero-average solution  $p$ of~\eqref{eq.pressureEq} (which always exists thanks to \cref{t:main_approx}) enjoys
\begin{equation}\label{p_contin_est_no_Linfty}
    \|p\|_{C^{2\gamma}_*(\Omega)} \leq C  \|u\|_{C^{\gamma}(\Omega)}^2.
\end{equation}
Indeed, suppose that \eqref{p_contin_est_no_Linfty} does not hold. Then, for all $k\in \N$, we can find a divergence-free vector field $u_k$ and a scalar function $p_k$ solving 
\[
   \left\{
    \begin{array}{rcll}
        -\Delta p_k &=& \diver \diver (u_k\otimes u_k) & \text{ in }\Omega\\[1ex]
        \partial_n p_k&=&u_k\otimes u_k : \nabla n & \text{ on } \partial \Omega,
    \end{array}
    \right.
\]
with $\int_\Omega p_k=0$ and
\[
1=\|p_k\|_{C^{2\gamma}_*(\Omega)}\geq k \|u_k\|^2_{C^{\gamma}(\Omega)}.
\]
In particular, $u_k\rightarrow 0$ in $C^\gamma(\Omega)$, which, by passing to the limit in the weak formulation \eqref{p_weaksol}, implies that the uniform limit of $(p_k)_k$, say $q$ (which we can always suppose to exist by Arzel\`a--Ascoli and up to a subsequence), solves
\[
-\int_\Omega q \Delta \varphi + \int_{\partial \Omega} q \partial_n \varphi =0,  \qquad \forall \varphi\in C^2(\overline \Omega).
\] 
Since $q$ is average-free,  from \cref{L:uniq} below we get $q\equiv 0$, which contradicts \eqref{p_contin_est_smooth}, since 
\[
1=\|p_k\|_{C^{2\gamma}_*(\Omega)}\leq C\left( \|u_k\|^2_{C^{\gamma}(\Omega)}+\|p_k\|_{L^\infty(\Omega)}\right)\rightarrow 0.
\]
Thus, the validity of \eqref{p_contin_est_no_Linfty} follows.

\smallskip

\textit{Step 2}. 
Let $u$ be as in the statement of \cref{thm.main}. We wish to prove the existence of a unique solution $p$ such that \eqref{p_contin_est} and \eqref{p_contin_est_limit} hold true. Let $u_\eps$ be the regular approximation given by \cref{l:approx} and let $p^\eps$ the unique zero-average solution (which exists by \cref{t:main_approx}) of 
\begin{equation}\label{Neuman_problem_p_approx}
    \left\{
    \begin{array}{rcll}
        -\Delta p^\eps &=& \diver \diver (u_\eps\otimes u_\eps) & \text{ in }\Omega\\[1ex]
        \partial_n p^\eps&=&u_\eps\otimes u_\eps : \nabla n & \text{ on } \partial \Omega. 
    \end{array}
    \right.
\end{equation}
 By \eqref{p_contin_est_no_Linfty}  and \cref{l:approx}, we get 
\[
\|p^\eps\|_{C^{2\gamma}_*(\Omega)}\leq  C \|u_\eps\|^2_{C^\gamma(\Omega)}\leq C \|u\|^2_{C^\gamma(\Omega)}.
\]
Thus $(p^\eps)_\eps$ is a bounded sequence in $C^{2\gamma}_*(\Omega)$. In particular, by Arzel\`a--Ascoli Theorem, up to a (non-relabeled) subsequence, we can suppose that there exists $p\in C^{2\gamma}_*(\Omega)$ such that 
\[
\int_\Omega p\,dx=0,
\quad
p^\eps\rightarrow p\ \text{in}\ C^0(\overline \Omega)
\quad\text{and}\quad
\|p\|_{C^{2\gamma}_*(\Omega)}\leq C \|u\|^2_{C^\gamma(\Omega)}.
\]
For every $\eps>0$, we can test   \eqref{Neuman_problem_p_approx} with a sufficiently regular test function $\varphi$ and, by the regularity of $u_\eps$, we can run the same computations as in \eqref{reassembl_bound_cond}, getting that $p^\eps$ and $u_\eps$ satisfy \eqref{p_weaksol}. 
Since also $u_\eps \rightarrow u$ in $C^0(\overline \Omega)$, we can pass to the limit and deduce that the couple $(u,p)$ solves \eqref{p_weaksol}. To prove the uniqueness of $p$, suppose $p_1$ and $p_2$ are two zero-average solutions of \eqref{p_weaksol}. 
Then
\begin{equation*}
-\int_\Omega (p_1-p_2) \Delta \varphi + \int_{\partial \Omega} (p_1-p_2) \partial_n \varphi =0,  \qquad \forall \varphi\in C^2(\overline \Omega).
\end{equation*}
which, together with the constraint $\int_\Omega p_1=\int_\Omega p_2=0$, gives $p_1=p_2$ thanks to \cref{L:uniq}.
\end{proof}

\begin{lemma}[Uniqueness]\label{L:uniq}
Let $\delta\in (0,1]$ and $\Omega\subset \R^d$ a bounded domain with $\partial \Omega\in C^{2,\delta}$.  Then solutions $p\in C^0(\overline \Omega)$ to the problem \eqref{p_weaksol} are unique up to constants. 
\end{lemma}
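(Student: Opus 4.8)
The plan is to reduce to the homogeneous problem and then test it against solutions of the Neumann problem with smooth data. If $p_1,p_2\in C^0(\overline\Omega)$ both solve \eqref{p_weaksol} with the same field $u$, then by linearity the difference $q\coloneqq p_1-p_2\in C^0(\overline\Omega)$ satisfies
\[
-\int_\Omega q\,\Delta\varphi+\int_{\partial\Omega}q\,\partial_n\varphi=0\qquad\text{for all }\varphi\in C^2(\overline\Omega),
\]
so it is enough to prove that every $q\in C^0(\overline\Omega)$ satisfying this identity is constant. Testing with $\varphi\in C^\infty_c(\Omega)$ already shows that $q$ is harmonic, hence smooth, in the interior, so the whole issue is the behaviour of $q$ at the boundary.

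The key step is to insert into the homogeneous identity the solution of the Neumann problem with a prescribed interior datum. I would fix $f\in C^\infty(\overline\Omega)$ with $\int_\Omega f=0$; since $\partial\Omega\in C^{2,\delta}$, classical Schauder estimates up to the boundary provide a function $\varphi_f\in C^{2,\delta}(\overline\Omega)$, unique up to additive constants, solving
\[
\Delta\varphi_f=f\ \text{ in }\Omega,\qquad \partial_n\varphi_f=0\ \text{ on }\partial\Omega;
\]
in particular $\varphi_f\in C^2(\overline\Omega)$ is an admissible test function in \eqref{p_weaksol}. Choosing $\varphi=\varphi_f$ and using that $\partial_n\varphi_f$ vanishes on $\partial\Omega$ yields $\int_\Omega q\,f=0$. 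Since this holds for every mean-zero $f\in C^\infty(\overline\Omega)$, writing an arbitrary $g\in C^\infty(\overline\Omega)$ as the sum of its $\Omega$-average (a constant) and a mean-zero remainder gives $\int_\Omega(q-c)\,g=0$ for all $g\in C^\infty(\overline\Omega)$, where $c\coloneqq|\Omega|^{-1}\int_\Omega q\,dx$. Hence $q=c$ almost everywhere and, by continuity, $q\equiv c$ on $\overline\Omega$, which proves uniqueness up to constants.

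The only nontrivial ingredient is the solvability of the Neumann problem with a solution of class $C^2(\overline\Omega)$, and this is exactly where the hypothesis $\partial\Omega\in C^{2,\delta}$ enters: a boundary of class $C^2$ only would be borderline for Schauder's estimates, while a $C^{1,1}$ (or Lipschitz) boundary would merely give an $H^2$ (or weaker) solution, which is not an admissible test function for \eqref{p_weaksol}. I expect this to be the only point requiring care, the remaining part being a routine duality and density argument; note also that the very same proof applies if $q$ is only assumed integrable, in agreement with \cref{R:only_integrable}.
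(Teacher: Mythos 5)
Your proof is correct, and it takes a slightly different and in fact more streamlined route than the paper's. Both arguments rest on the same key ingredient, namely Schauder solvability of an auxiliary Neumann problem with solution in $C^{2,\delta}(\overline\Omega)\subset C^2(\overline\Omega)$ on a $C^{2,\delta}$ domain, so that it is an admissible test function in \eqref{p_weaksol}; the difference is in the choice of data. The paper first tests with a solution of $\Delta\varphi=1$, $\partial_n\varphi=|\Omega|/|\partial\Omega|$ to show that a zero-average $q$ also has zero boundary average, and then tests with solutions of $\Delta\varphi^\eps=(\operatorname{sgn}q)_\eps$ with the compatibility constant $c^\eps$ as Neumann datum, obtaining $\int_\Omega q\,(\operatorname{sgn}q)_\eps=c^\eps\int_{\partial\Omega}q=0$ and hence $\int_\Omega|q|=0$ after letting $\eps\to0$. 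You instead prescribe the homogeneous Neumann datum $\partial_n\varphi_f=0$, which kills the boundary integral in \eqref{p_weaksol} outright and only forces the interior datum $f$ to be mean-zero; the conclusion then follows from the elementary duality fact that a function orthogonal to all mean-zero smooth functions is constant. This avoids both the preliminary boundary-average step and the mollification of $\operatorname{sgn}q$ with its limiting argument, at no extra cost, while the paper's sign-type data give $\int_\Omega|q|=0$ directly at the price of tracking the constants $c^\eps$. Your closing remark is also accurate: since the boundary pairing $\langle p|_{\partial\Omega},\partial_n\varphi\rangle$ in \eqref{p_very_very_weak} vanishes for test functions with $\partial_n\varphi=0$, the same argument applies verbatim to $q\in L^1(\Omega)$, consistently with \cref{R:only_integrable}.
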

Coherently with the regularity setting considered in the current paper, the uniqueness in the above lemma is stated in the class $p\in C^0(\overline \Omega)$. However, from the proof it will be clear that $p\in L^1(\Omega)$ suffices. Note that in such larger class one has to interpret the weak pressure formulation as in \eqref{p_very_very_weak}.
\begin{proof}
We have to prove that, if $q \in C^0(\overline \Omega)$ solves 
\begin{equation}\label{q_equal_zero}
-\int_\Omega q \Delta \varphi + \int_{\partial \Omega} q \partial_n \varphi =0,  \qquad \forall \varphi\in C^2(\overline \Omega),
\end{equation}
then necessarily $q$ is a constant. Since $-\int_\Omega \Delta \varphi + \int_{\partial \Omega}  \partial_n \varphi =0$, then $q-\frac{1}{|\Omega|}\int_\Omega q$ still solves \eqref{q_equal_zero}. Thus it is enough to show that, if $q$ is a $0$-average solution to \eqref{q_equal_zero}, then $q\equiv 0$.
First let $\varphi \in C^{2,\delta}(\Omega)$ be a solution to
$$
  \left\{
    \begin{array}{rcll}
        \Delta \varphi &=& 1 & \text{ in }\Omega\\[1ex]
        \partial_n \varphi &=&\frac{|\Omega|}{|\partial \Omega|} & \text{ on } \partial \Omega. 
    \end{array}
    \right.
$$
Note that the above problem admits a solution since the compatibility condition $\int_\Omega 1 = \int_{\partial \Omega} \frac{|\Omega|}{|\partial \Omega|}$ is satisfied. Plugging such test function in \eqref{q_equal_zero} yields
\begin{equation}
\label{boundary_average_zero}
0=\int_\Omega q=\frac{|\Omega|}{|\partial \Omega|}\int_{\partial\Omega} q.
\end{equation}
Now suppose that $q\not\equiv 0$. Extend $q$ trivially by $0$ outside $\Omega$ and consider $\text{sgn}\, q\in L^\infty(\R^d)$. Let $(\text{sgn}\, q)_\eps\in C^\infty (\overline \Omega)$ be its mollification. Since $q\not\equiv 0$ we can find a non-trivial $0$-average solution to 
$$
  \left\{
    \begin{array}{rcll}
        \Delta \varphi^\eps &=& (\text{sgn}\, q)_\eps & \text{ in }\Omega\\[1ex]
        \partial_n \varphi^\eps &=&c^\eps & \text{ on } \partial \Omega,
    \end{array}
    \right.
$$
where we have chosen $c^\eps= \frac{1}{|\partial \Omega|}\int_\Omega (\text{sgn}\, q)_\eps$ in order to enforce the compatibility condition. Clearly $\varphi^\eps \in C^{2,\delta}(\Omega)\subset C^2(\overline \Omega)$, thus \eqref{q_equal_zero} and \eqref{boundary_average_zero} imply $$
\int_\Omega q (\text{sgn}\, q)_\eps =c^\eps \int_{\partial \Omega} q=0\qquad \forall \eps>0.
$$
Letting $\eps\rightarrow 0$, we deduce $\int_\Omega |q|=0$, which yields a contradiction. 
\end{proof}

Thus, from now on, since the main goal is to prove \cref{t:main_approx}, we will always work with $u,p\in C^{1,\delta}(  \Omega)\cap C^\infty(\Omega)$ and thus directly with the formulation \eqref{eq.pressureEq} instead of the weak one \eqref{p_weaksol}, since in this regular setting  the two are equivalent. Moreover, note that the continuity estimate \eqref{p_contin_est_smooth} is the only thing that needs to be proven, since the existence (and the uniqueness) of the solution $p$ is a direct consequence of standard Elliptic Theory (see \cites{GT,V22} for instance).  Indeed, since $u\in C^{1,\delta}(  \Omega)$, we can equivalently rewrite \eqref{eq.pressureEq} as 
$$
    \left\{
    \begin{array}{rcll}
        -\Delta p &=& \diver \diver (u\otimes u) & \text{ in }\Omega\\[2mm]
        \partial_n p&=&- n\cdot\diver(u\otimes u) & \text{ on } \partial \Omega.
    \end{array}
    \right.
$$
In every $\Omega$ of class $C^2$ we have $ n\in C^{1}(\partial \Omega)$, and thus
\[
\diver \diver (u\otimes u)=\partial_i u^j\partial_j u^i\in C^{0,\delta}( \Omega),\quad n\cdot\diver (u\otimes u)  \in C^{0,\delta}(\partial\Omega),
\]
which, together with the compatibility condition 
\begin{align*}
\int_{\Omega} \diver \diver (u\otimes u)=\int_{\partial \Omega}n\cdot\diver (u\otimes u),
\end{align*}
ensure the solvability of \eqref{eq.pressureEq}, giving a pressure $p\in C^{1,\delta}(\Omega)$. Thus, from now on, we can forget about the existence and uniqueness statement and only focus on the estimate \eqref{p_contin_est_smooth}.
This, in turn, is a consequence of \cref{t:main_interior} and \cref{t:local_boundary} proved in Sections \ref{s:interior} and \ref{s:boundary}, respectively. The assumption $\partial \Omega \in C^{2,1}$ will be needed to prove \cref{t:local_boundary}, that is the boundary part of the estimate \eqref{p_contin_est_smooth}.

\section{Interior regularity}\label{s:interior}

In this section we wish to prove the following

\begin{theorem}[Interior regularity]
\label{t:main_interior}
Let $\gamma \in (0,\frac{1}{2}]$ and let $\Omega \subset \mathbb{R}^d$ be a bounded and simply connected domain of class $C^{2}$.  Let $u \in C^{1,\delta}(\Omega)\cap C^\infty (\Omega)$,  for some $\delta\in (0,1)$, be a divergence-free vector field such that $u\cdot n|_{\partial\Omega}=0$. Then, for every $\widetilde \Omega \Subset\Omega$, the unique zero-average solution $p\in C^{1,\delta}(\Omega)$ of~\eqref{eq.pressureEq} enjoys
\begin{equation}\label{p_contin_est_smooth_interior}
    \|p\|_{C^{2\gamma}_*\left(\widetilde \Omega\right)} \leq C \left( \|u\|_{C^{\gamma}(\Omega)}^2 +\|p\|_{L^\infty(\Omega)}\right),
\end{equation}
for some constant $C>0$ depending on $\Omega$, $\widetilde \Omega$ and $\gamma$ only.
\end{theorem}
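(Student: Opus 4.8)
The plan is to deduce \cref{t:main_interior} from the global double--regularity estimate on the whole space, \cref{t:double_Rd_whole_range}, by a localization and extension argument; note that existence and uniqueness of $p\in C^{1,\delta}(\Omega)$ are already granted, so only the estimate \eqref{p_contin_est_smooth_interior} has to be proven. Since $\widetilde\Omega\Subset\Omega$, I first fix an open neighbourhood $W$ of $\overline{\widetilde\Omega}$ and an intermediate open set $\Omega'$ with $\widetilde\Omega\Subset W\Subset\Omega'\Subset\Omega$. The key preliminary step is to construct a \emph{divergence-free, compactly supported extension} $\bar u\in C^\gamma_c(\R^d)$ of $u|_W$, that is, a field with $\diver\bar u=0$ in $\R^d$, $\bar u=u$ on $W$, and $\|\bar u\|_{C^\gamma(\R^d)}\lesssim\|u\|_{C^\gamma(\Omega)}$. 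This can be done as in \cite{DLS22}: pick a cutoff $\chi\in C^\infty_c(\Omega')$ with $\chi\equiv1$ on $W$; the field $\chi u$ agrees with $u$ on $W$ but has divergence $\nabla\chi\cdot u$, supported in $\Omega'\setminus\overline W$ and of zero mean, which can be corrected by a Bogovskii-type operator on the Lipschitz domain $\Omega'\setminus\overline W$ (supported away from $W$). Equivalently, on a ball (or a finite union of balls covering $\overline{\widetilde\Omega}$) one writes $u^i=\partial_j B^{ij}$ with $B$ an antisymmetric "stream tensor" in $C^{1,\gamma}$ satisfying $\|B\|_{C^{1,\gamma}}\lesssim\|u\|_{C^\gamma}$, and sets $\bar u^i:=\partial_j(\chi B^{ij})$, which is automatically divergence-free by antisymmetry. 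In all cases $\bar u$ is smooth (because $u\in C^\infty(\Omega)$), compactly supported, divergence-free, and coincides with $u$ on $W$.

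Next, let $\bar p$ be the solution on $\R^d$ of $-\Delta\bar p=\diver\diver(\bar u\otimes\bar u)$ furnished by \cref{t:double_Rd_whole_range}, so that $\|\bar p\|_{C^{2\gamma}_*(\R^d)}\lesssim\|\bar u\|_{C^\gamma(\R^d)}^2\lesssim\|u\|_{C^\gamma(\Omega)}^2$; since $2\gamma>0$ this also controls $\|\bar p\|_{L^\infty(\R^d)}$. On $W$ the two right-hand sides coincide, $\diver\diver(u\otimes u)=\diver\diver(\bar u\otimes\bar u)$, hence $h:=p-\bar p$ is harmonic in $W$. By interior estimates for harmonic functions, $\|h\|_{C^{2\gamma}_*(\widetilde\Omega)}\lesssim\|h\|_{C^2(\overline{\widetilde\Omega})}\lesssim\|h\|_{L^\infty(W)}\le\|p\|_{L^\infty(\Omega)}+\|\bar p\|_{L^\infty(\R^d)}$, where the first inequality is trivial because $2\gamma\le1$ and harmonic functions are smooth (for $\gamma=\tfrac12$ one uses the second-difference characterization \eqref{d:zygmund_omega}). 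Writing $p=\bar p+h$ on $\widetilde\Omega$, and using that restriction does not increase the second-difference Zygmund norm (and that on $\R^d$ this is comparable to the Littlewood--Paley norm, cf. \eqref{d:zygmund_difference_quotioent}), one gets $\|\bar p\|_{C^{2\gamma}_*(\widetilde\Omega)}\lesssim\|\bar p\|_{C^{2\gamma}_*(\R^d)}\lesssim\|u\|_{C^\gamma(\Omega)}^2$, whence
\[
\|p\|_{C^{2\gamma}_*(\widetilde\Omega)}\le\|\bar p\|_{C^{2\gamma}_*(\widetilde\Omega)}+\|h\|_{C^{2\gamma}_*(\widetilde\Omega)}\lesssim\|u\|_{C^\gamma(\Omega)}^2+\|p\|_{L^\infty(\Omega)},
\]
which is \eqref{p_contin_est_smooth_interior}. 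The argument is uniform in $\gamma\in(0,\tfrac12]$: for $\gamma<\tfrac12$ the target $C^{2\gamma}_*$ is just $C^{2\gamma}$, while for $\gamma=\tfrac12$ it is the Zygmund space $C^1_*$, and in both regimes the harmonic part $h$ is controlled for free.

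The only genuinely technical point is the divergence-free extension together with the $C^\gamma$ bound; everything else is soft (localization, elliptic interior regularity, restriction of norms). In particular, the entire substance of the interior statement — the doubling of the exponent, and the borderline improvement $p\in C^1_*$ at $\gamma=\tfrac12$ that upgrades the $\loglip$ result of \cite{C2014} — is packaged into the global \cref{t:double_Rd_whole_range}, of which \cref{t:main_interior} is essentially a transplanted corollary. A subsidiary bookkeeping issue, worth stating explicitly in the proof, is that the local Zygmund norm on $\widetilde\Omega$ is defined through second differences rather than Littlewood--Paley blocks; this is harmless since the two are equivalent on $\R^d$ and restriction only drops constraints.
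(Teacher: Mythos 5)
Your proposal is correct and its skeleton is exactly the paper's: build a compactly supported, divergence-free extension of $u$, solve $-\Delta \bar p=\diver\diver(\bar u\otimes\bar u)$ on $\R^d$ and invoke \cref{t:double_Rd_whole_range}, then observe that the difference with $p$ is harmonic where the right-hand sides agree and control it by interior harmonic estimates plus $\|p\|_{L^\infty(\Omega)}$. The only genuine divergence is in how the extension is produced. The paper extends $u$ from all of $\Omega$ across $\partial\Omega$ (citing \cite{KMPT00}*{Section 5}), which uses the hypotheses $\diver u=0$, $u\cdot n|_{\partial\Omega}=0$ and $\partial\Omega\in C^2$, and makes $p-q$ harmonic on the whole of $\Omega$; you instead extend only from an interior neighbourhood $W\Supset\widetilde\Omega$ by cutoff plus a Bogovskii correction or a stream-tensor potential, so $p-\bar p$ is harmonic only on $W$ — which is all one needs for an interior bound. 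Your route has the merit of not using the boundary data or boundary regularity at all, making the interior statement self-contained; the paper's route is shorter because the global extension lemma is quoted off the shelf. Two small points you should make explicit if you keep your construction: the Bogovskii correction requires the divergence $\nabla\chi\cdot u$ to have zero mean on \emph{each connected component} of $\Omega'\setminus\overline W$ (so choose $W$, $\Omega'$ with connected annular gap, or argue component by component), and on a finite union of balls a single antisymmetric potential $B$ with $u^i=\partial_j B^{ij}$ and $\|B\|_{C^{1,\gamma}}\lesssim\|u\|_{C^\gamma}$ is not automatic — one needs a star-shaped (or simply handled via partition of unity) configuration and a quantitative Poincaré-type lemma. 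These are fixable bookkeeping issues, not gaps in the idea, and under the stated hypotheses the paper's global extension is available as a simpler substitute for that step.
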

The previous theorem being a purely interior regularity result, in the case $\gamma<\frac{1}{2}$ it does not contain anything new with respect to the double regularity established in \cites{Is2013,CD18} in the whole space, since one can simply localize the equation \eqref{eq.pressureEq} strictly inside $\Omega$ and then extending it to $\R^d$. However, since \cref{t:main_interior} additionally provides the new estimate  in the borderline case $\gamma=\frac{1}{2}$ (which was left open in \cites{Is2013,CD18}), we are going to give a detailed proof in the whole range $\gamma\leq \frac{1}{2}$, noticing that the borderline case $\gamma=\frac12$ does not require any different argument with respect to case $\gamma <\frac12$.

\begin{proof}
Let $u$ be as in the statement. Extend it to the whole space (see for instance \cite{KMPT00}*{Section~5}), getting a divergence-free vector field $\tilde u\in C^1_c(\R^d)$ such that 
\begin{equation}
    \label{est_extension_cont}
    \|\tilde u\|_{C^\gamma(\R^d)}\leq C\|u\|_{C^\gamma(\Omega)}.
\end{equation}
Define $q$ to be the unique bounded solution, decaying at infinity, of 
\[
-\Delta q=\diver \diver (\tilde u\otimes \tilde u)\quad \text{in } \R^d.
\]
Then $p-q$ is harmonic in $\Omega$ and thus we can estimate it, for every $\gamma \leq \frac12$ and every $\widetilde \Omega \Subset\Omega$, as 
\[
\|p-q\|_{C^{2\gamma}_*(\widetilde \Omega)}\leq C\|p-q\|_{C^{1}(\widetilde\Omega)}\leq C \|p-q\|_{L^\infty(\Omega)}\leq C \left( \|p\|_{L^\infty(\Omega)}+\|q\|_{L^\infty(\R^d)}\right),
\]
where in the second inequality we have used that $\|\nabla f\|_{L^\infty(\tilde \Omega)}\lesssim \|f\|_{L^\infty(\Omega)}$, if $f$ is harmonic in~$\Omega$.
Thus we infer that 
\[
\|p\|_{C^{2\gamma}_*(\widetilde \Omega)}\leq \|p-q\|_{C^{2\gamma}_*(\widetilde \Omega)}+\|q\|_{C^{2\gamma}_*(\widetilde \Omega)}\leq C \left( \|p\|_{L^\infty(\Omega)}+\|q\|_{C^{2\gamma}_*(\R^d)}\right).
\]
Moreover, by \cref{t:double_Rd_whole_range} below, and also using \eqref{est_extension_cont}, we have
\begin{equation*}
\|q\|_{C^{2\gamma}_*(\R^d)}\leq C \|\tilde u\|^2_{C^\gamma(\R^d)}\leq C \| u\|^2_{C^\gamma(\Omega)}
\end{equation*}
for some constant $C>0$ depending on $\gamma$. This concludes the proof.
\end{proof}

The following theorem provides the full double regularity of the pressure in the whole space, for all $\gamma\leq\frac12$, thus both extending the results \cites{Is2013,CD18} to the borderline case $\gamma=\frac{1}{2}$ and also improving to $C^1_*(\R^d)$ the $\loglip(\R^d)$ regularity obtained in \cite{C2014}. By straightforward modifications, the very same result holds in the periodic setting of the torus $\T^d$.

\begin{theorem}[Double regularity on $\R^d$]\label{t:double_Rd_whole_range}
Let $\gamma\in (0,\frac12]$ and $v\in C_c^\gamma(\R^d)$ be a compactly supported divergence-free vector field. Then, there exists a unique solution decaying at infinity of
\[
-\Delta q=\diver \diver (v\otimes v)\quad \text{in } \R^d
\]
satisfying
\begin{equation}\label{q_double_whole_space_whole_range}
\|q\|_{C^{2\gamma}_*(\R^d)}\leq C \|v\|^2_{C^\gamma(\R^d)}.
\end{equation}
\end{theorem}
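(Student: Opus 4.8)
I would solve the Poisson problem explicitly and then estimate the solution one Littlewood--Paley block at a time, crucially exploiting the divergence-free condition to replace the two-derivative nonlinearity by a product of first derivatives. Concretely, set $q\coloneqq (-\Delta)^{-1}\diver\diver(v\otimes v)=\sum_{i,j=1}^d R_iR_j(v^iv^j)$, where $R_i$ is the $i$-th Riesz transform normalized so that $R_iR_j=(-\Delta)^{-1}\partial_i\partial_j$. Since $v\in C^\gamma_c(\R^d)$ we have $v^iv^j\in L^1\cap L^\infty$, so $q\in L^p(\R^d)$ for every $p\in(1,\infty)$ and $-\Delta q=\diver\diver(v\otimes v)$ in $\mathcal S'$; the Calder\'on--Zygmund kernel of $R_iR_j$ decays like $|x|^{-d}$ and $v\otimes v$ is compactly supported, whence $q(x)\to 0$ as $|x|\to\infty$, while the difference of two decaying solutions is a decaying harmonic function, hence $\equiv 0$ by Liouville. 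This settles existence and uniqueness, and it only remains to prove $\sup_{N\in 2^\N}N^{2\gamma}\|q_N\|_{L^\infty}\lesssim\|v\|_{C^\gamma(\R^d)}^2$.

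The two elementary inputs I would use are consequences of \cref{thm.bernstein} and of $C^\gamma(\R^d)=C^\gamma_*(\R^d)$ (valid as $\gamma\in(0,1)$): for every dyadic $M$,
\[
\|v_M\|_{L^\infty}\lesssim\langle M\rangle^{-\gamma}\|v\|_{C^\gamma},
\qquad
\|\nabla v_M\|_{L^\infty}\lesssim\langle M\rangle^{1-\gamma}\|v\|_{C^\gamma}.
\]
The block $N=1$ is immediate: by \cref{thm.bernstein} and compactness of $\operatorname{supp}v$,
\[
\|q_1\|_{L^\infty}\lesssim\|q_1\|_{L^2}\lesssim\|q\|_{L^2}\lesssim\|v\otimes v\|_{L^2}\lesssim\|v\|_{L^\infty}\|v\|_{L^2}\lesssim\|v\|_{C^\gamma}^2,
\]
which is the desired bound since $N^{2\gamma}=1$.

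For $N\geq 2$ I would split the nonlinearity into its balanced and unbalanced frequency interactions, $v^iv^j=\Pi_1^{ij}+\Pi_2^{ij}$ with $\Pi_1^{ij}\coloneqq\sum_{M\sim M'}v^i_Mv^j_{M'}$ (high--high) and $\Pi_2^{ij}\coloneqq\sum_{M\gg M'\ \text{or}\ M'\gg M}v^i_Mv^j_{M'}$ (the rest), and estimate the two pieces with \emph{different} representations of $q$. For the balanced piece I keep the zeroth-order form: $\mathbf P_N R_iR_j$ is, for $N\geq 2$, a Fourier multiplier supported at frequency $\sim N$ whose kernel has $L^1$-norm bounded independently of $N$, so $\|\mathbf P_N R_iR_j(v^i_Mv^j_{M'})\|_{L^\infty}\lesssim\|v^i_Mv^j_{M'}\|_{L^\infty}\lesssim M^{-2\gamma}\|v\|_{C^\gamma}^2$; since a block $v^i_Mv^j_{M'}$ with $M\sim M'$ has frequency $\lesssim M$, $\mathbf P_N$ of it vanishes unless $M\gtrsim N$, and summing over the finitely many $M'\sim M$ and over $M\gtrsim N$ (convergent because $2\gamma>0$) gives $\|\mathbf P_N\sum_{i,j}R_iR_j\Pi_1^{ij}\|_{L^\infty}\lesssim N^{-2\gamma}\|v\|_{C^\gamma}^2$. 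For the unbalanced piece I would instead use the divergence-free condition blockwise: since $\sum_i\partial_iv^i_M=\mathbf P_M(\diver v)=0$ and likewise $\sum_j\partial_jv^j_{M'}=0$, one has the pointwise identity $\sum_{i,j}\partial_i\partial_j(v^i_Mv^j_{M'})=\sum_{i,j}\partial_jv^i_M\,\partial_iv^j_{M'}$, hence
\[
\sum_{i,j}R_iR_j\Pi_2^{ij}=\sum_{i,j}\ \sum_{M\gg M'\ \text{or}\ M'\gg M}(-\Delta)^{-1}\bigl(\partial_jv^i_M\,\partial_iv^j_{M'}\bigr).
\]
For each such pair the product $\partial_jv^i_M\,\partial_iv^j_{M'}$ is frequency localized at $\sim L\coloneqq\max(M,M')$, so its $\mathbf P_N(-\Delta)^{-1}$ vanishes unless $L\sim N$, and then, with $m\coloneqq\min(M,M')$,
\[
\bigl\|\mathbf P_N(-\Delta)^{-1}\bigl(\partial_jv^i_M\partial_iv^j_{M'}\bigr)\bigr\|_{L^\infty}\lesssim N^{-2}\|\nabla v_M\|_{L^\infty}\|\nabla v_{M'}\|_{L^\infty}\lesssim N^{-2}N^{1-\gamma}m^{1-\gamma}\|v\|_{C^\gamma}^2;
\]
summing over $m\lesssim N$ (using $\sum_{m\lesssim N}m^{1-\gamma}\lesssim N^{1-\gamma}$, legitimate since $1-\gamma>0$) and over the finitely many $L\sim N$ yields $N^{-1-\gamma}N^{1-\gamma}\|v\|_{C^\gamma}^2=N^{-2\gamma}\|v\|_{C^\gamma}^2$. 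Adding the two contributions gives $N^{2\gamma}\|q_N\|_{L^\infty}\lesssim\|v\|_{C^\gamma}^2$ for all $N\geq 2$, and we conclude.

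The heart of the matter — and the only structural step — is the $N\geq 2$ analysis: the divergence-free condition is exactly what lets one trade $\diver\diver(v\otimes v)$ for the product of first derivatives $\sum_{i,j}\partial_jv^i\,\partial_iv^j$, which is the mechanism responsible for the doubling from $\gamma$ to $2\gamma$; moreover one genuinely needs \emph{two} representations, because on the high--high interactions spending derivatives is wasteful (the projector $\mathbf P_N$ then keeps only a frequency $\sim N$ far below the frequency $\sim M$ of the factors) whereas on the unbalanced interactions one must move a derivative off the nonlinearity in order to gain decay. I expect the main difficulty to be purely organizational: tracking which frequency interactions survive each $\mathbf P_N$ and keeping the dyadic sums under uniform control. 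Notably, no case is borderline — the exponents entering the summations are only $2\gamma>0$ and $1-\gamma>0$, both strictly positive on $(0,\tfrac12]$ — which is precisely why $\gamma=\tfrac12$ causes no trouble here (in contrast with the classical Schauder approach) and why one lands in $C^1_*(\R^d)$ rather than merely $\loglip(\R^d)$.
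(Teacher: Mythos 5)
Your proposal is correct and follows essentially the same route as the paper: a Littlewood--Paley splitting of $v^iv^j$ into balanced (high--high) and unbalanced interactions, with the blockwise divergence-free identity $\sum_{i,j}\partial_i\partial_j(v^i_Kv^j_M)=\sum_{i,j}\partial_jv^i_K\,\partial_iv^j_M$ used on the unbalanced part, a zero-order multiplier bound on the balanced part, and an $L^2$-based treatment of the low-frequency block exploiting compact support. The only differences are cosmetic — you invert $-\Delta$ via Riesz transforms with explicit kernel bounds and invoke Liouville for uniqueness, where the paper regularizes $v$ first and quotes the $C^s_*$-continuity of the operator $A_{-2}\in\operatorname{Op}(S^{-2})$ — so the argument is sound as written.
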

\begin{proof}
Uniqueness is trivial. To prove the existence, one can simply regularize the vector field $v$ as $v_\eps\in C^\infty_c(\R^d)$ and then look for the corresponding solution $q^\eps$. By smoothness of the right-hand side, there exists $q^\eps\in C^\infty(\R^d)$ solving $-\Delta q^\eps=\diver \diver (v_\eps\otimes v_\eps)$. Since, for all $\eps>0$,
\[
\|v_\eps\|_{C^\gamma(\R^d)} \leq \|v\|_{C^\gamma(\R^d)},
\]
it is then enough to prove the continuity estimate \eqref{q_double_whole_space_whole_range} for $q^\eps$ and $v_\eps$, from which one can then obtain a solution $q$ by Arzel\`a--Ascoli Theorem, up to a subsequence. To lighten the notation we  simply write $q$ and $v$ in place of $q^\eps$ and $v_\eps$.

\jump

The global double regularity \eqref{q_double_whole_space_whole_range} follows by a Littlewood--Paley analysis. We believe that the following  computations also give a clearer intuition on why one should expect that the very same double regularity can still be derived in our more general case of \eqref{eq.pressureRd}. We refer the reader to \cite{Is2013}, where the Littlewood--Paley formalism has been indeed used to prove the double pressure regularity for the first time, for every $0<\gamma<\frac12$.

\jump

Let us write $q = \mathbf{P}_1q + \mathbf{P}_{>1}q$, where $\mathbf{P}_{1}$ stands for the first Littlewood--Paley smooth truncation on frequencies $|\xi| \leq 1$. We refer to \cref{s:function_spaces} for the notation and the basic facts on Littlewood--Paley calculus. The reason for distinguishing between high and low frequencies is that the high-frequency part of the Laplacian is invertible. More precisely, recall that $\mathbf{P}_{> 1}(\xi)=1-\chi(\xi)$ for some smooth compactly function $\chi$ which equals $1$ in a neighbourhood of $0$.
The symbol 
\[
    a(\xi)=-\frac{1-\chi(\xi)}{|\xi|^2}
\]
belongs to the class $S^{-2}$ and therefore $A_{-2} \coloneqq \operatorname{Op}(a(\xi)) \in \operatorname{Op}(S^{-2})$ is a good candidate for the inverse of $-\Delta$ on high frequencies. We indeed have $A_{-2} \circ (-\Delta)=\mathbf{P}_{> 1}$, this equality being exact because the two operators are Fourier multipliers. Therefore, we can write
\[
    \mathbf{P}_{> 1}q = A_{-2}\partial_{ij}^2(v^iv^j). 
\]
Let us use a Littlewood--Paley decomposition for $v$ as 
\[
   v^i = \sum_{M \in 2^{\mathbb{N}}} v_M^i,
\]
so that we can write 
\[
   v^iv^j = \sum_{M\sim K} v^i_Mv^j_K + \sum_{M \ll K} (v_{M}^iv_K^j +v_{K}^iv_M^j).
\]
Since the $v_{N}^i$ are smooth and divergence-free (note that the Littlewood--Paley truncation $\mathbf{P}_N$ commutes with the divergence, as they are both Fourier multipliers), we have 
\[
    \partial_{ij}^2(v^i_Kv^j_M) = \partial_jv^i_K\partial_iv^j_M,
\]
which we use on the $\sum_{M \ll K}$ terms to obtain 
\[
    \mathbf{P}_{> 1}q = \sum_{M\sim K} A_{-2}\partial_{ij}^2(v^i_Mv^j_K) + \sum_{M\ll K} A_{-2}\left(\partial_jv^i_K\partial_iv^j_M + \partial_jv^i_M\partial_iv^j_K\right). 
\]
Finally, since $\mathbf{P}_N$ is a Fourier multiplier, and so commutes with $A_{-2}$ and $\partial^2_{ij}$, for $N\geq 2$ we have
\begin{align*}
    q_N&=\sum_{M\sim K} A_{-2}\partial_{ij}^2\mathbf{P}_N(v^i_Mv^j_K) + \sum_{M\ll K} A_{-2}\mathbf{P}_N\left(\partial_jv^i_K\partial_iv^j_M + \partial_jv^i_M\partial_iv^j_K\right)=: I_N + J_N.
\end{align*}

\jump

\noindent\emph{Estimation of $I_N$.} First, let us remark that $v_M^iv_K^j$ is frequency supported in $|\xi| \leq \max\{M,K\}$, therefore $\mathbf{P}_N(v_M^iv_K^j)=0$ unless $\max\{M,K\} \gtrsim N$. We thus have 
\[
I_N = \sum_{K\sim M \gtrsim N} A_{-2}\partial_{ij}^2\mathbf{P}_N(v^i_Mv^j_K). 
\]
Now, $A_{-2}\partial_{ij}^2 = \operatorname{Op}\left(-\frac{(1-\chi(\xi))\xi_i\xi_j}{|\xi|^2}\right) \in \operatorname{Op}(S^0)$ is a $C^0_* \to C^0_*$ continuous $0$-order operator, according to \cref{th.calderonVaillancourt}.
Thus,  
\[
    \|I_N\|_{C^{0}_*} \lesssim \sum_{M\sim K \gtrsim N} \left\|\mathbf{P}_N(v_M^iv_K^j)\right\|_{C^0_*} \lesssim \sum_{M\sim K \gtrsim N} \left\|v_M^iv_K^j\right\|_{L^{\infty}},  
\]
where we used the definition of the $C^0_*$ norm in the last inequality. We now use that $v \in C^{\gamma}_*(\R^d)$, so that $\|v_K^j\|_{L^\infty} \lesssim K^{-\gamma}\|v\|_{C_*^{\gamma}}$, and therefore 
\begin{equation*}
    \|I_N\|_{C^{0}_*} \lesssim \sum_{M\sim K \gtrsim N} M^{-\gamma}K^{-\gamma} \|v\|_{C_*^{\gamma}}^2\lesssim \sum_{M \gtrsim N} M^{-2\gamma} \|v\|_{C_*^{\gamma}}^2 \lesssim N^{-2\gamma}\|v\|_{C_*^{\gamma}}^2.   
\end{equation*}
In particular,  $I_N$ being frequency localized at $N$, and using the definition of the $C^0_*$ norm, the previous estimate implies 
\begin{equation}\label{eq:estI_infty}
    \|I_N\|_{L^\infty(\R^d)} \lesssim N^{-2\gamma}\|v\|_{C_*^{\gamma}}^2.   
\end{equation}
Notice that, in the estimate of this term, we have been able to double the regularity, because the frequencies of the two $v$'s were similar, therefore responsible for the addition of regularities.

\jump

\noindent\emph{Estimation of $J_N$.} Note that $\partial_jv^i_K\partial_iv^j_M$ is supported in $\{|\xi| \sim K\}$ because $M\ll K$, and therefore $\mathbf{P}_N(\partial_jv^i_K\partial_iv^j_M)=0$ unless $N\sim K$. We combine this remark and the continuity of $A_{-2} \colon C^{-2}_* \to C^0_*$ (because $A_{-2}$ is a pseudodifferential operator of order $-2$) to obtain 
\begin{align*}
    \|J_N\|_{C_*^0} & \lesssim \sum_{M\ll K \sim N} \left\|\mathbf{P}_N\left(\partial_jv^i_K\partial_iv^j_M + \partial_jv^i_M\partial_iv^j_K\right)\right\|_{C^{-2}_{*}} \\
    & \lesssim N^{-2}\sum_{M\ll K \sim N} \left\|\partial_jv^i_K\partial_iv^j_M + \partial_jv^i_M\partial_iv^j_K\right\|_{L^{\infty}}. 
\end{align*}
By Bernstein's inequality (see (ii) in \cref{thm.bernstein}) and the fact that $v\in C^{\gamma}_*(\R^d)$, we see that $\|\partial_iv^j_K\|_{L^{\infty}(\R^d)} \lesssim K^{1-\gamma}\|v\|_{C^{\gamma}_*(\R^d)}$ so that 
\begin{equation*}
    \|J_N\|_{C_*^0(\R^d)} \lesssim N^{-2}\sum_{M\ll K \sim N}M^{1-\gamma}K^{1-\gamma}\|v\|^2_{C^{\gamma}_*(\R^d)} \lesssim N^{-2\gamma} \|v\|^2_{C^{\gamma}_*(\R^d)}.
\end{equation*}
In particular, as before, this implies 
\begin{equation}\label{eq:estII}
    \|J_N\|_{L^\infty(\R^d)} \lesssim  N^{-2\gamma} \|v\|^2_{C^{\gamma}_*(\R^d)}.
\end{equation}
Notice that, in the above estimate, we gained double regularity because the double divergence has been rewritten as a product of gradients. Without this rewriting, since $K\gg M$, all derivatives should fall on the function localized at $\sim K$, but with the rewriting we transfer one derivative to the lowest frequency $M$, and this derivative is estimated by $M$ (instead of $K$), hence the gain.

\jump

Combining~\eqref{eq:estI_infty} and~\eqref{eq:estII} we finally obtain that, for all $N\geq 2$, there holds
\begin{equation}\label{IandJ_est}
    N^{2\gamma}\|q_N\|_{L^{\infty}} \lesssim \|v\|^2_{C^{\gamma}_*}=\|v\|^2_{C^{\gamma}},
\end{equation}
since $\gamma\not \in \mathbb N$.
Recall that $q=\mathbf{P}_1 q+\mathbf{P}_{>1} q$, so we need to estimate $\mathbf{P}_1 q$. By Sobolev embedding, if $s>\frac{d}{2}$, then 
\begin{align*}
    \|\mathbf{P}_1 q\|^2_{L^\infty(\R^d)}&\lesssim  \||\nabla|^s\mathbf{P}_1 q\|^2_{L^2(\R^d)}=\int_{\R^d} \left|\widehat{\mathbf{P}_1 q} (\xi)\right|^2 |\xi|^{2s} \chi_{|\xi|\leq 1}(\xi)\,d\xi\\
    &\lesssim \int_{\R^d} \left|\widehat{\mathbf{P}_1 q} (\xi)\right|^2 \chi_{|\xi|\leq 1}(\xi)\,d\xi\lesssim \int_{\R^d} \left|\widehat{v^i v^j}(\xi)\right|^2  \chi_{|\xi|\leq 1}(\xi)\,d\xi\\
    &\lesssim \|v^iv^j\|^2_{L^2(\R^d)}\lesssim \| v\|^4_{C^0(\R^d)},
\end{align*}
where in the fourth inequality we have used that $-|\xi|^2q(\xi)=\xi_i\xi_j \widehat{v^iv^j}(\xi)$, for every $\xi\in \R^d$, while in the last inequality we have used the compact support of $v$. This, together with \eqref{IandJ_est}, gives \eqref{q_double_whole_space_whole_range}.
The proof is thus complete.
\end{proof}

\section{Boundary regularity, part 1: extending to the whole space}\label{s:boundary}

The aim of this section, and of the subsequent \cref{s:main_proof_new}, is to provide a proof of the following

\begin{theorem}[Local boundary regularity]\label{t:local_boundary}
Let $\gamma \in (0,\frac{1}{2}]$, $\delta \in (0,1)$ and  $\Omega \subset \mathbb{R}^d$ be a bounded and simply connected domain of class $C^{2,1}$.  Let $u \in C^{1,\delta}(\Omega)\cap C^\infty(\Omega)$ be a divergence-free vector field such that $u\cdot n|_{\partial\Omega}=0$ and let $x_0\in \partial \Omega$. Then, there exists a ball $B_{R_0}(x_0)$ such that the unique zero-average solution $p\in C^{1,\delta}(\Omega)$ of~\eqref{eq.pressureEq} enjoys
\begin{equation}\label{p_contin_est_boundary_local}
    \|p\|_{C^{2\gamma}_*\left(\Omega\cap B_{R_0}(x_0) \right)} \leq C \left(\|u\|_{C^{\gamma}(\Omega)}^2+\|p\|_{L^\infty(\Omega)}\right) 
\end{equation}
for some constant $C>0$ depending on $\gamma$, $R_0$ and $\Omega$ only.
\end{theorem}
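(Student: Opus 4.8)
The plan is to reduce the boundary estimate to a situation on the whole space $\R^d$ by passing, near $x_0$, to the normal geodesic coordinates $(r,\theta)$ from \cref{prop.geometry}, flattening the boundary to $\{r=0\}$. In these coordinates the pressure equation becomes, by \eqref{eq.geodesicLaplace} and \eqref{eq.doubleDiv}, an elliptic equation of the form $-\frac{1}{G}\partial_i(G g^{ij}\partial_j p) = \frac{1}{G}\partial^2_{ij}(G u^iu^j)$ on a half-ball, with Neumann condition $\partial_r p(0,\theta)=0$ and with $u^r(0,\theta)=0$. Since $\partial\Omega\in C^{2,1}$, the metric coefficients $g^{ij}$ and $G$ are \emph{Lipschitz} ($C^{0,1}=C^1_*$-type regularity), which is exactly what the pseudodifferential calculus from \cref{s:pseudodiff_formalism} with $\delta$ close to $1$ can tolerate. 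The first step is therefore to localize $p$ by a cutoff $\chi$ supported in the coordinate patch, write down the equation solved by $\chi p$ in the new coordinates, and note that the commutator terms coming from $\chi$ are supported strictly inside $\Omega$, hence already controlled by the interior estimate \cref{t:main_interior} and by $\|p\|_{L^\infty}$.

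Next I would remove the boundary. Using the Neumann condition $\partial_r p=0$ at $r=0$, extend $p$ evenly across $\{r=0\}$ and the metric and the datum correspondingly (even reflection of $g^{ij}$, $G$ for $i,j\neq r$, etc.), so that the reflected $p$ solves an elliptic equation on a full ball in $\R^d$; the key point, exactly as in the interior computation of \cref{t:double_Rd_whole_range}, is that the reflected right-hand side is still of the schematic form $\frac1G\partial^2_{ij}(Gu^iu^j)$ with a reflected vector field that remains bounded in $C^\gamma$ (here one uses $u^r|_{r=0}=0$ so that the reflection does not create a jump in the relevant components). One then freezes the high-frequency part: writing $\mathbf P_1(\chi p)+\mathbf P_{>1}(\chi p)$, the operator $L:=-\frac1G\partial_i(Gg^{ij}\partial_j\,\cdot)$ has principal symbol $g^{ij}(x)\xi_i\xi_j\in C^1_*S^2_{1,1}$, which is elliptic on the relevant compact set by part (2) of \cref{prop.geometry}; by \cref{thm.elliptic} (with $\delta$ arbitrarily close to $1$) there is a parametrix $B\in\operatorname{Op}(S^{-2}_{1,\delta})$ with $B\circ L=\operatorname{Op}(\chi)+C$, $C\in\operatorname{Op}(S^{-2(1-\delta)}_{1,\delta})$, and applying $B$ to the equation expresses $\chi p$ up to lower-order terms as $B$ applied to the quadratic right-hand side.

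The heart of the argument is then the Littlewood--Paley estimate of $B$ applied to $\frac1G\partial^2_{ij}(Gu^iu^j)$, run exactly in the spirit of the $I_N$/$J_N$ dichotomy of \cref{t:double_Rd_whole_range}: decompose $u^iu^j$ (and $G u^iu^j$) into high-high ($M\sim K$) and high-low ($M\ll K$) paraproducts. On the high-high piece, $B\partial^2_{ij}\in\operatorname{Op}(S^0_{1,\delta})$ is $C^0_*\to C^0_*$ bounded by \cref{th.calderonVaillancourt}, and the two factors at comparable frequencies $\gtrsim N$ give the gain $N^{-2\gamma}$; on the high-low piece one must first move one derivative onto the low-frequency factor — here the divergence-free structure $\partial_iu^i=0$ survives in the new coordinates only up to the $C^{0,1}$ metric factor $G$ (i.e.\ $\diver F=\frac1G\partial_i(GF^i)$ from \eqref{eq.divForumation}), so the rewriting $\partial^2_{ij}(Gu^iu^j)=\partial_j(Gu^i)\partial_i u^j + (\text{terms with }\partial G)$ produces, besides the good paraproduct term, extra terms in which a derivative hits $G\in C^1_*$; these are handled with \cref{lem.holderProduct} and cost nothing worse than $N^{-2\gamma}\|u\|_{C^\gamma}^2$ because $G$ is Lipschitz and $2\gamma-1<0<1$. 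Summing over $N$ and adding the low-frequency bound $\|\mathbf P_1(\chi p)\|_{L^\infty}\lesssim\|p\|_{L^\infty}$ and the parametrix-error term $C(\chi p)$, which gains two derivatives over $\chi p$ and is absorbed, gives $\|\chi p\|_{C^{2\gamma}_*}\lesssim\|u\|_{C^\gamma(\Omega)}^2+\|p\|_{L^\infty(\Omega)}$; transferring back to Euclidean coordinates (the change of variables is a $C^{2,1}$ diffeomorphism, hence bilipschitz, so $C^{2\gamma}_*$ norms with $2\gamma\le 1$ are preserved) and translating the Zygmund-space estimate on the half-ball back to $\Omega\cap B_{R_0}(x_0)$ via the even reflection yields \eqref{p_contin_est_boundary_local}.

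The main obstacle I anticipate is precisely the limited ($C^{0,1}$) regularity of the metric: one is forced to work with symbol classes $S^m_{1,\delta}$ with $\delta$ only slightly below $1$, so every application of the symbolic calculus (\cref{thm.calculus}, \cref{thm.elliptic}) leaves a remainder of order only $-2(1-\delta)$, which is barely negative; one must check that these borderline-negative remainders, together with the $s\delta$ losses in the $C^s_*S^m_{1,\delta}$ bounds, still close the estimate at the target regularity $2\gamma\le 1$ — and, relatedly, that the non-divergence-free correction terms generated by the metric ($\partial G$-terms) genuinely land in a negative Hölder--Zygmund class so that \cref{lem.holderProduct} applies. Keeping the Littlewood--Paley bookkeeping clean while carrying the extra metric factors through the paraproduct decomposition is where the real work lies.
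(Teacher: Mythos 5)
Your overall scheme (geodesic coordinates, even reflection across $\{r=0\}$, parametrix, Littlewood--Paley paraproduct estimate of the quadratic term) is the same as the paper's, but there is a genuine gap at the central technical step. You take the operator $-\frac1G\partial_i(Gg^{ij}\partial_j\cdot)$, observe that its symbol lies in the \emph{rough} class $C^1_*S^2_{1,1}$, and then invoke \cref{thm.elliptic} ``with $\delta$ arbitrarily close to $1$'' to produce a parametrix. This is not legitimate: \cref{thm.calculus}, \cref{thm.elliptic} and \cref{th.calderonVaillancourt} are stated for \emph{smooth} symbols in $S^m_{1,\delta}$ with $\delta\in[0,1)$, and a symbol whose $x$-dependence is merely Lipschitz belongs to no such class (and $\delta=1$ is excluded outright, the $S_{1,1}$ calculus being ill-behaved on $C^s_*$). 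The paper's essential device, which your proposal does not contain, is symbol smoothing: split $e_{1,i}=g^{ij}\xi^j$ into a sharp part $e^\sharp_{1,i}=\sum_{K\le M^\delta}g^{ij}_K\mathbf P_M(\xi)\xi^j$, which \emph{is} a smooth symbol in $S^1_{1,\delta}$ and whose quadratic counterpart $e^\sharp_2\in S^2_{1,\delta}$ is elliptic and invertible by \cref{thm.elliptic}, plus a flat remainder $e^\flat_{1-\delta,i}\in C^1_*S^{1-\delta}_{1,\delta}$ which is estimated by hand (term $B$) and absorbed by interpolation. Moreover the working choice is $\delta$ \emph{small} (the paper needs $2\gamma-2(1-\delta)<0$, $4\gamma>3\delta$, $2\gamma>\delta/2$), not $\delta$ close to $1$; your ``main obstacle'' paragraph therefore misdiagnoses where the difficulty sits, and without the sharp/flat decomposition the parametrix you use simply does not exist within the quoted toolbox.

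A second, smaller but real gap: you assert that in the new coordinates the Neumann condition is $\partial_r p(0,\theta)=0$ and reflect $p$ evenly on that basis. The actual condition in \eqref{eq.pressureEq} is $\partial_n p=u\otimes u:\nabla n$, which is the second fundamental form evaluated on the tangential field $u$ and does not vanish in general. The paper first subtracts the corrector $\psi$ of \cref{l:remove_boundary} (solving $\Delta\psi=A$, $\partial_n\psi=u\otimes u:\nabla n$, with $\|\psi\|_{C^{1,\beta}}\lesssim\|u\|^2_{C^\gamma}$), reducing to the homogeneous problem \eqref{eq.pressureEqNoBoundary}; only then is the even extension jump-free (and even so, the absence of distributional jumps for $\tilde G\tilde g^{ij}\partial_j\tilde p$ and $\partial_j(\tilde G\tilde u^i\tilde u^j)$ has to be checked, as in \cref{l:dd ext}). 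Without this reduction your reflected function fails the PDE across $\{r=0\}$ by a surface-supported term that your argument never accounts for. Finally, a remark rather than an objection: the ``terms with $\partial G$'' you worry about in the high-low rewriting are avoided entirely in the paper by setting $v^i=G\tilde u^i$ and $a=1/G$, so that $\partial_i v^i=0$ exactly and \cref{lem.rewriting_new} carries the Lipschitz factor $a$ harmlessly through the paraproduct; adopting that substitution would simplify your sketch and remove the need for \cref{lem.holderProduct} at that point.
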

Note that, by compactness, we can always cover $\partial \Omega$ with a finite number of balls.
This immediately implies that, by patching all the local estimates \eqref{p_contin_est_boundary_local},
\[
\|p\|_{C^{2\gamma}_*\left(\Omega\setminus \Omega_{\tilde R}\right)} \leq C\left( \|u\|_{C^{\gamma}(\Omega)}^2+\|p\|_{L^\infty(\Omega)}\right),
\]
for a (possibly larger) constant $C>0$ and some $\tilde R>0$, where we defined
\[
\Omega_{\tilde R}\coloneqq\left\{x\in \Omega \, : \, \text{dist }(x,\partial \Omega)\geq \tilde R \right\}.
\]
This, together with the interior regularity result of \cref{t:main_interior}, proves \cref{t:main_approx}, from which we already deduced \cref{thm.main}.

\jump

Thus, from now on, we only focus on \eqref{p_contin_est_boundary_local}. To prove such a quantitative local estimate, we use the normal geodesic coordinates introduced in  \cref{prop.geometry}, getting a modified equation in the new variables. We  then extend such equation to the whole space $\R^d$ and apply pseudodifferential calculus, together with Littlewood--Paley analysis, in order to prove the desired regularity. In this \cref{s:boundary} we focus on the first step, \textit{i.e.}, changing coordinates and extending to the whole space. Then, in \cref{s:main_proof_new}, we prove the quantitative estimate \eqref{p_contin_est_boundary_local}. 

\subsection{Removing the boundary datum} 

Here we prove that the boundary datum is not the main obstacle for proving the double regularity of the pressure, but it only represents a compatibility condition in order to guarantee that \eqref{eq.pressureEq} admits a solution. 
Removing the boundary datum is then useful in order to pass to the normal geodesic coordinates, avoiding some extra (tedious) terms at the boundary.

Note that, in the result below, $u$ does not need to be divergence-free or tangent to~$\partial \Omega$.

\begin{lemma}\label{l:remove_boundary} Let $\Omega \subset \R^d$ be of class $C^{2,\delta}$, for some $\delta\in (0,1)$. Let $\gamma\in (0,1)$ and $u\in C^\gamma(\Omega)$ be any vector field. Denote $\beta=\min\{\delta,\gamma\}$ and $A=\frac{1}{|\Omega|}\int_{\partial\Omega} u\otimes u : \nabla n$. Then, there exists a unique zero-average $\psi \in C^{1,\beta}(\Omega)$ such that 
\[
  \left\{
    \begin{array}{rcll}
        \Delta \psi &=& A & \text{ in }\Omega\\[1ex]
        \partial_n \psi &=&u\otimes u : \nabla n & \text{ on } \partial \Omega,
    \end{array}
    \right.
\]
and, moreover, there exists a constant $C>0$ such that
\begin{equation}\label{psi_regular}
\|\psi\|_{C^{1,\beta}(\Omega)}\leq C \|u\|^2_{C^\gamma(\Omega)}.
\end{equation}
\end{lemma}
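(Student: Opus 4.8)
The plan is to solve the inhomogeneous Neumann problem for $\psi$ by standard elliptic theory and then read off the quantitative estimate \eqref{psi_regular} from Schauder estimates together with a scaling argument for the right-hand side.

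First I would verify that the problem is well posed. The compatibility condition for a Neumann problem $\Delta\psi = f$ in $\Omega$, $\partial_n\psi = h$ on $\partial\Omega$ is $\int_\Omega f = \int_{\partial\Omega} h$; here $f = A$ is the constant $\frac{1}{|\Omega|}\int_{\partial\Omega} u\otimes u:\nabla n$, so $\int_\Omega A = \int_{\partial\Omega} u\otimes u:\nabla n = \int_{\partial\Omega} h$, and the condition holds by construction. Since $\Omega\in C^{2,\delta}$ we have $n\in C^{1,\delta}(\partial\Omega)$, hence $\nabla n\in C^{0,\delta}(\partial\Omega)$ after extending $n$ to a neighbourhood of $\partial\Omega$, and $u\otimes u\in C^{0,\gamma}(\Omega)$ since $u\in C^\gamma$; therefore the boundary datum $h = u\otimes u:\nabla n$ lies in $C^{0,\beta}(\partial\Omega)$ with $\beta=\min\{\delta,\gamma\}$. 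The interior datum $A$ is constant, hence trivially in $C^{0,\beta}(\Omega)$. Classical Schauder theory for the Neumann problem (e.g.\ \cite{GT}, and the formulations in \cite{V22}) then yields a solution $\psi\in C^{1,\beta}(\Omega)$ (more precisely $C^{2,\beta}$ up to the boundary regularity permitted by $\partial\Omega\in C^{2,\delta}$, but $C^{1,\beta}$ is all we claim), unique once we normalise $\int_\Omega\psi = 0$ to kill the additive constant.

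For the estimate \eqref{psi_regular}, the Schauder estimate gives
\[
\|\psi\|_{C^{1,\beta}(\Omega)} \leq C\Big(\|A\|_{C^{0,\beta}(\Omega)} + \|u\otimes u:\nabla n\|_{C^{0,\beta}(\partial\Omega)}\Big),
\]
with $C = C(\Omega,\beta)$, where the zero-average normalisation absorbs the usual $\|\psi\|_{L^\infty}$ term on the right. It remains to bound both data by $\|u\|_{C^\gamma(\Omega)}^2$. For the boundary term, since $\nabla n\in C^{0,\delta}(\partial\Omega)$ is a fixed datum of the domain and $u\otimes u$ is quadratic in $u$, the product rule for Hölder norms gives $\|u\otimes u:\nabla n\|_{C^{0,\beta}(\partial\Omega)} \leq C(\Omega)\,\|u\otimes u\|_{C^{0,\beta}(\Omega)} \leq C(\Omega)\,\|u\|_{C^\gamma(\Omega)}^2$, using $\|fg\|_{C^\beta}\lesssim\|f\|_{C^\beta}\|g\|_{C^\beta}$ and $\beta\le\gamma$. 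For the interior term, $A$ is a constant so $\|A\|_{C^{0,\beta}(\Omega)} = |A| \leq \frac{1}{|\Omega|}\int_{\partial\Omega}|u\otimes u:\nabla n| \leq C(\Omega)\,\|u\|_{C^0(\partial\Omega)}^2 \leq C(\Omega)\,\|u\|_{C^\gamma(\Omega)}^2$. Combining these two bounds with the Schauder estimate yields \eqref{psi_regular} with a constant depending only on $\Omega$ (through $|\Omega|$, $|\partial\Omega|$, the Schauder constant, and $\|\nabla n\|_{C^{0,\delta}}$).

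I do not expect any genuine obstacle here: the statement is essentially a packaging of classical elliptic regularity, and the only mild care needed is (i) checking the compatibility condition — which is immediate from the definition of $A$ — and (ii) making sure the extension of $n$ to a neighbourhood of $\partial\Omega$ used to define $\nabla n$ does not affect the quantities, which was already observed after \eqref{reassembl_bound_cond} in the introduction. One small point worth stating explicitly is that the normalisation $\int_\Omega\psi=0$ is what makes the solution unique and simultaneously lets the Schauder estimate be stated without a lower-order $\|\psi\|_{L^\infty}$ term on the right; alternatively one first gets the estimate with $\|\psi\|_{L^\infty(\Omega)}$ included and then removes it by the standard Poincaré-type argument (or a compactness/contradiction argument as in the proof of \cref{thm.main} above), using that constants are the only solutions of the homogeneous problem.
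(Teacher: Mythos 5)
Your proposal is correct and follows essentially the same route as the paper: verify the compatibility condition, invoke standard Neumann Schauder theory (as in \cite{V22}) to get the unique zero-average solution with $\|\psi\|_{C^{1,\beta}(\Omega)}\leq C(|A|+\|u\otimes u:\nabla n\|_{C^\beta(\partial\Omega)}+\|\psi\|_{L^\infty(\Omega)})$, bound both data by $\|u\|_{C^\gamma(\Omega)}^2$, and remove the $\|\psi\|_{L^\infty(\Omega)}$ term by the compactness/contradiction argument used for \cref{thm.main} — which is exactly the fallback you mention at the end, and is the safer formulation than asserting the Schauder estimate comes without the lower-order term.
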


\begin{proof} 
Clearly, $A\in \R$ and $u\otimes u : \nabla n\in C^\beta(\partial \Omega)$. Moreover, the compatibility condition
\[
\int_{\partial \Omega} u\otimes u:\nabla n =\int_{\partial \Omega} \partial_n \psi =\int_{\Omega} \Delta \psi =|\Omega| A
\]
holds true. 
Thus, by standard elliptic arguments (see \cite{V22}*{Theorem 1.2} for instance), we infer that there exists a unique $\psi\in C^{1,\beta}(\Omega)$ such that $\int_{\Omega} \psi=0$ and 
\[
\|\psi \|_{C^{1,\beta}(\Omega)}\leq C\left(|A|+ \| u\otimes u : \nabla n\|_{C^\beta(\partial \Omega)}+\|\psi\|_{L^\infty(\Omega)} \right).
\]
Since
\begin{align*}
     |A|\leq C \| u\|^2_{C^0(\overline \Omega)} \qquad \text{and} \qquad  \| u\otimes u : \nabla n\|_{C^\beta(\partial \Omega)}\leq C\| u\|^2_{C^\gamma(\Omega)},
\end{align*}
for some constant $C>0$ depending on $\Omega$ only, we achieve 
\[
\|\psi\|_{C^{1,\beta}(\Omega)}\leq C \left( \|u\|^2_{C^\gamma(\Omega)}+\|\psi\|_{L^\infty(\Omega)} \right).
\]
The desired estimate \eqref{psi_regular} then follows by eliminating the term $\|\psi\|_{L^\infty(\Omega)}$ from the right-hand side of the previous estimate. This can be done by exploiting the very same contradiction argument already used in the proof of \cref{thm.main} above.
We omit the details.
\end{proof}

Up to changing $p$ with $p-\psi$, we are left with proving double regularity for solutions of 
\begin{equation}
    \label{eq.pressureEqNoBoundary}
    \left\{
    \begin{array}{rcll}
        -\Delta p &=& \diver \diver (u\otimes u) + A & \text{ in }\Omega\\[1ex]
        \partial_n p&=&0 & \text{ on } \partial \Omega,
    \end{array}
    \right.
\end{equation}
for some constant $A\in\R$ guaranteeing the compatibility condition for solving \eqref{eq.pressureEqNoBoundary}. More precisely, \cref{t:local_boundary} is a consequence of the following 

\begin{theorem}
\label{t:main_boundary_local_noboundary}
Let $\gamma \in (0,\frac{1}{2}]$, $\delta \in (0,1)$ and  $\Omega \subset \mathbb{R}^d$ be a bounded and simply connected domain of class $C^{2,1}$.  Let $u \in C^{1,\delta}( \Omega)\cap C^\infty(\Omega)$ be a divergence-free vector field such that $u\cdot n|_{\partial\Omega}=0$ and let $x_0\in \partial \Omega$. Then, there exists a ball $B_{R_0}(x_0)$ such that the unique zero-average solution $p\in C^{2,\delta}(\Omega)$ of~\eqref{eq.pressureEqNoBoundary}, with $A=\frac{1}{|\Omega|}\int_{\partial \Omega} u\otimes u :\nabla n$, enjoys
\begin{equation}\label{p_contin_est_boundary_local_noboundary}
    \|p\|_{C^{2\gamma}_*\left(\Omega\cap B_{R_0}(x_0) \right)} \leq C \left(\|u\|_{C^{\gamma}(\Omega)}^2+\|p\|_{L^\infty(\Omega)}\right) .
\end{equation}
for some constant $C>0$ depending on $\gamma$, $R_0$ and $\Omega$ only.
\end{theorem}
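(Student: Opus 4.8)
The plan is to localize near $x_0$, pass to the normal geodesic coordinates of \cref{prop.geometry}, reflect the (now homogeneous) problem across the boundary so as to land on $\R^d$, and then invert the resulting variable‑coefficient elliptic operator by pseudodifferential calculus while running the Littlewood--Paley doubling mechanism of \cref{t:double_Rd_whole_range}. More precisely, in the coordinates $(r,\theta)$ near $x_0$, equation \eqref{eq.pressureEqNoBoundary} becomes, after multiplication by $G$ and using \eqref{eq.geodesicLaplace}--\eqref{eq.doubleDiv}, $-\partial_i(Gg^{ij}\partial_j p)=\partial^2_{ij}(Gu^iu^j)+AG$ on $\{r>0\}$, with $G,g^{\theta\theta}\in C^{0,1}$ uniformly elliptic (this is where $\partial\Omega\in C^{2,1}$ enters) and with the homogeneous conditions $\partial_r p|_{r=0}=0$, $u^r|_{r=0}=0$. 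Since the metric is block‑diagonal and the Neumann datum vanishes, I extend across $\{r=0\}$ by reflecting $p$, $G$, $g^{\theta\theta}$ and $u^\alpha$ ($\alpha\ge 2$) evenly and $u^r$ oddly in $r$. Using $\partial_r p|_{r=0}=0$ and $u^r|_{r=0}=0$ one checks that the extended $p$ is $C^{1,\delta}$ across the boundary, that the extended equation keeps the same structure with Lipschitz coefficients, that $u$ stays $C^\gamma$ with $\partial_i(Gu^i)=0$, and that the identity $Gu^iu^j=\tfrac1G w^iw^j$ persists with $w^i:=Gu^i$, where crucially $\partial_i w^i=0$ \emph{exactly} — the manifold divergence‑free condition, which (unlike $\diver u=0$ in these coordinates) commutes with the Littlewood--Paley projections. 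Multiplying by a cutoff $\chi$ supported in the chart and interpolating the coefficients to Euclidean ones outside a ball, I obtain on all of $\R^d$ a uniformly elliptic divergence‑form operator $\mathcal L=-\partial_i(c^{ij}\partial_j\,\cdot\,)$ with $c^{ij}\in C^{0,1}(\R^d)$ and
\[
 \mathcal L(\chi p)=\partial^2_{ij}\!\big(\tfrac1G w^iw^j\big)+h,
\]
where $h$ collects $AG$, the commutator $[\mathcal L,\chi]p$ and the interpolation errors. Every term of $[\mathcal L,\chi]p$ can be written as $\partial_i$ of an $L^\infty$ function or as an $L^\infty$ function — moving derivatives off $p$ by the divergence structure and the Lipschitz regularity of $c^{ij}$ — so that $\|h\|_{C^{2\gamma-2}_*(\R^d)}\lesssim\|u\|_{C^\gamma(\Omega)}^2+\|p\|_{L^\infty(\Omega)}$; here $C^{-1}_*\subset C^{2\gamma-2}_*$, which is valid exactly because $\gamma\le\frac12$.

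Next I invert $\mathcal L$ on high frequencies. Since $c^{ij}$ is only Lipschitz we have $\mathcal L=\operatorname{Op}(\ell)$ with $\ell\in C^1_*S^2_{1,0}$, so \cref{thm.elliptic} does not apply directly; I therefore symbol‑smooth $\ell=\ell^\#+\ell^b$ with $\ell^\#\in S^2_{1,\delta}$ elliptic and $\ell^b\in C^1_*S^{2-\delta}_{1,\delta}$, for a fixed $\delta<1$ chosen in the (nonempty, since $\gamma\le\frac12$) range where $\gamma<\delta$ and $\delta$ is close enough to $1$. Applying \cref{thm.elliptic} to $\operatorname{Op}(\ell^\#)$ produces $\mathcal E\in\operatorname{Op}(S^{-2}_{1,\delta})$ with $\mathcal E\operatorname{Op}(\ell^\#)=\operatorname{Op}(\chi_0)+\mathcal C$, $\mathcal C\in\operatorname{Op}(S^{-2(1-\delta)}_{1,\delta})$, whence
\[
 \operatorname{Op}(\chi_0)(\chi p)=\mathcal E\big(\partial^2_{ij}(\tfrac1G w^iw^j)\big)+\mathcal E h-\mathcal C(\chi p)-\mathcal E\operatorname{Op}(\ell^b)(\chi p).
\]
By \cref{th.calderonVaillancourt}, $\mathcal E h\in C^{2\gamma}_*$ with norm $\lesssim\|h\|_{C^{2\gamma-2}_*}$, while $\mathcal C(\chi p)$ and $\mathcal E\operatorname{Op}(\ell^b)(\chi p)$ lie in $C^{\gamma+2(1-\delta)}_*$ and $C^{\gamma+\delta}_*$ respectively, both contained in $C^{2\gamma}_*$ by the choice of $\delta$, with norms controlled by $\|\chi p\|_{C^\gamma_*}$. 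The single H\"older bound $\|p\|_{C^\gamma(\Omega\cap B_{R_0}(x_0))}\lesssim\|u\|_{C^\gamma(\Omega)}^2+\|p\|_{L^\infty(\Omega)}$ — which follows by reading the same inversion at order $0$, or can be quoted from \cite{DLS22,BT21} — then makes all these contributions acceptable.

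It remains to estimate the doubling term $\mathcal E\,\partial^2_{ij}(\tfrac1G w^iw^j)$ in $C^{2\gamma}_*$, i.e.\ to show $\|\mathbf P_N\mathcal E\,\partial^2_{ij}(\tfrac1G w^iw^j)\|_{L^\infty}\lesssim N^{-2\gamma}\|u\|_{C^\gamma}^2$ for $N\ge 2$ (the piece $\mathbf P_1$ being bounded by $\|p\|_{L^\infty}$ via Sobolev embedding exactly as in \cref{t:double_Rd_whole_range}). Here $\mathcal E\,\partial^2_{ij}$ has order $0$, hence is $C^0_*\to C^0_*$ bounded and, by \cref{thm.calculus}, commutes with $\mathbf P_N$ up to harmless smoothing errors. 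Mimicking \cref{t:double_Rd_whole_range}, I decompose $w^iw^j=\sum_{M\sim K}w^i_Mw^j_K+\sum_{M\ll K}(w^i_Mw^j_K+w^i_Kw^j_M)$. On the balanced part each summand has frequency $\lesssim M$, so only $M\gtrsim N$ reach the $N$‑th block and $\sum_{M\gtrsim N}\|\tfrac1G w^i_Mw^j_K\|_{L^\infty}\lesssim\sum_{M\gtrsim N}M^{-2\gamma}\|u\|_{C^\gamma}^2\lesssim N^{-2\gamma}\|u\|_{C^\gamma}^2$ (the residual low‑$M$ contribution to $\mathbf P_N$ comes from the high frequencies of $\tfrac1G$, hence carries an extra $N^{-1}$ and is summable). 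On each unbalanced summand I expand $\partial^2_{ij}(\tfrac1G w^i_Mw^j_K)$ and repeatedly use $\partial_i w^i_M=0=\partial_j w^j_K$ to convert the two derivatives into a product of first‑order derivatives of $w_M$ and $w_K$ — times bounded factors $\tfrac1G$, $\partial_k\tfrac1G$, or an outer divergence of an $L^\infty$ function whenever a $\partial^2\tfrac1G$ would appear — reducing to terms that, after applying $\mathcal E$ of order $-2$ and using that the output has frequency $\sim K\sim N$, are bounded by $N^{-2}M^{a}K^{b}\|u\|_{C^\gamma}^2$ with $a+b=2-2\gamma$ and $a\le 1-\gamma$, so that summation over $M\ll K\sim N$ yields $\lesssim N^{-2\gamma}\|u\|_{C^\gamma}^2$, exactly as in the whole‑space computation. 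Combining the three steps gives $\|\chi p\|_{C^{2\gamma}_*(\R^d)}\lesssim\|u\|_{C^\gamma(\Omega)}^2+\|p\|_{L^\infty(\Omega)}$; restricting to the region where $\chi\equiv1$ and transforming back through the $C^{2,1}$ coordinate change (which preserves $C^{2\gamma}_*$ since $2\gamma\le1$) proves \eqref{p_contin_est_boundary_local_noboundary}.

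The hard part is precisely Step 2 together with the Step‑3 bookkeeping. Because $\mathcal L$ has merely Lipschitz coefficients, one cannot multiply the distribution $\partial^2 p$ by the coefficients — Bony's condition $r+s>0$ in \cref{lem.holderProduct} fails at $\gamma=\frac12$ — so everything must be kept in (para)differential form; the elliptic parametrix must be built through symbol smoothing with a delicately chosen $\delta$; and, since $\mathcal E$ is not a Fourier multiplier and multiplication by $\tfrac1G$ does not commute with $\mathbf P_M$, the doubling argument of \cref{t:double_Rd_whole_range} has to be reorganized around $w=Gu$ (for which $\partial_i w^i=0$ survives the frequency truncations) with all commutators verified to be of strictly lower order. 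The threshold $\partial\Omega\in C^{2,1}$ is exactly what makes the coefficients Lipschitz, and hence this whole scheme available.
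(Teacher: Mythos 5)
Your overall skeleton coincides with the paper's: normal geodesic coordinates at $x_0$, even/odd reflection across $\{r=0\}$ using $\partial_r p(0,\theta)=0$ and $u^r(0,\theta)=0$, extension to $\R^d$ by cutoffs, symbol smoothing plus the elliptic parametrix of \cref{thm.elliptic}, and a Littlewood--Paley doubling argument organized around $w=G u$ and $a=1/G$ so that the divergence-free condition becomes the exact Euclidean identity $\partial_i w^i=0$ (this is precisely the paper's substitution $v=G\tilde u$). Your treatment of the main term and the smooth remainders ($\mathcal{E}h$, $\mathcal{C}(\chi p)$) is consistent with the paper's estimates of $A$ and $R$, and handling the cutoff by commutators rather than by the paper's nested cutoffs $\psi,\tilde\psi,\tilde{\tilde\psi}$ is a legitimate variant, since $[\mathcal{L},\chi]p$ can indeed be written as $\partial(L^\infty)+L^\infty$ with bounds in $\|p\|_{L^\infty}$.

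The genuine gap is in your Step 2, in the rough (flat) remainder $\mathcal{E}\operatorname{Op}(\ell^b)(\chi p)$. You perform symbol smoothing on the \emph{full second-order} symbol $\ell\in C^1_*S^2_{1,0}$ and claim $\operatorname{Op}(\ell^b):C^{\gamma}_*\to C^{\gamma-(2-\delta)}_*$ with norm controlled by $\|\chi p\|_{C^\gamma_*}$. The target regularity index is $s=\gamma+\delta-2<-\tfrac12$, which lies far outside the admissible range $-(1-\delta)<s<1$ of the mapping theorem for symbols in $C^1_*S^{2-\delta}_{1,\delta}$ (the very result, \cite{taylorIII}*{Chapter 13, Proposition 9.10}, the paper cites in its remark after Section 6.2), and in fact the estimate is false: writing $\operatorname{Op}(\ell^b)(\chi p)\sim\sum_{K>M^{\delta}}c_K\,\partial^2_{ij}(\chi p)_M$, the balanced interactions $K\sim M$ contribute to $\mathbf{P}_N$ for every $M\gtrsim N$ terms of size $\sim K^{-1}M^{2-\gamma}\sim M^{1-\gamma}$, so one is led to $\sup_N N^{2\gamma-2}\sum_{M\gtrsim N}M^{1-\gamma}$, which diverges for any $\gamma\le 1$; no choice of $\delta$ fixes this, and your side constraint ``$\delta$ close to $1$'' is anyway incompatible with the requirement $\delta\le 1-\gamma/2$ you need for $\mathcal{C}(\chi p)$. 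The point is structural: two derivatives must never both land on $p$ against the high-frequency part of the merely Lipschitz coefficient. The paper avoids this by smoothing only the \emph{first-order} symbol $g^{ij}\xi_j$, keeping the outer divergence $\psi\partial_i$ outside, so the flat part is first-order in $q$; its estimate then reads $\|B\|_{C^{2\gamma}_*}\lesssim\|q\|_{C^{2\gamma-\delta/2}_*}$ with $\delta$ chosen \emph{small} (so that $4\gamma>3\delta$, $2\gamma-2(1-\delta)<0$, $-2\gamma+\delta/2<0$), and this almost-critical norm is absorbed by interpolation with $\|q\|_{L^\infty}$ and an $\eps$-absorption argument --- no appeal to the single H\"older bound is needed. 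To repair your proof you should replace the second-order symbol smoothing by the first-order one (keeping the divergence form) and estimate the resulting flat term either by the paper's Littlewood--Paley computation or by the quoted rough-symbol theorem in its valid range, and then close by absorption rather than by the $C^\gamma$ estimate alone.
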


Note that, since $u\in C^1(\overline \Omega)\cap C^\infty(\Omega)$ is divergence-free and tangent to the boundary (which in particular gives that $\partial \Omega$ is a level set of the scalar function $u\cdot n$), we have
\begin{align*}
\int_{\Omega} \diver \diver (u\otimes u)&=\int_{\partial \Omega} \partial_j(u^i u^j)n^i=\int_{\partial \Omega} u^j \partial_j u^i n^i =\int_{\partial \Omega} u^j \partial_j(u\cdot n)-\int_{\partial \Omega} u^iu^j \partial_j n^i\\
&=-\int_{\partial \Omega} u\otimes u :\nabla n=-|\Omega| A.
\end{align*}
Therefore, the compatibility condition in the Neumann boundary value problem \eqref{eq.pressureEqNoBoundary} is satisfied and the existence of a solution, say $p\in C^{2,\delta}(\Omega)$, unique up to constants, follows by standard Elliptic Theory. Thus we only have to prove the estimate \eqref{p_contin_est_boundary_local_noboundary}.

\jump

Even if straightforward, we give a detailed proof on how the previous result implies \cref{t:local_boundary} for the reader's convenience.

\begin{proof}[Proof of \cref{t:local_boundary}]
Let $u$ be as in the statement of \cref{t:local_boundary}. Let $A=\frac{1}{|\Omega|}\int_{\partial\Omega} u\otimes u : \nabla n$ and let $\psi$ be the corresponding unique solution of 
\[
  \left\{
    \begin{array}{rcll}
        \Delta \psi &=& A & \text{ in }\Omega\\[1ex]
        \partial_n \psi &=&u\otimes u : \nabla n & \text{ on } \partial \Omega
    \end{array}
    \right.
\]
given by \cref{l:remove_boundary}, with $\int_\Omega \psi=0$.

\jump

Let $q$ be the unique zero-average solution of \eqref{eq.pressureEqNoBoundary} given by \cref{t:main_boundary_local_noboundary}. Then $p=q+\psi$ solves \eqref{eq.pressureEq} and it is also average-free (since both $q$ and $\psi$ are), which in particular implies its uniqueness. Moreover, since $2\gamma\leq 1$, by \eqref{psi_regular} and \eqref{p_contin_est_boundary_local_noboundary} we get
\begin{align*}
\|p\|_{C^{2\gamma}_*\left(\Omega\cap B_{R_0}(x_0)\right)}&\leq \|q\|_{C^{2\gamma}_*\left(\Omega\cap B_{R_0}(x_0)\right)}+\|\psi\|_{C^{2\gamma}_*\left(\Omega\cap B_{R_0}(x_0)\right)}\\
&\leq 
C\left( \|u\|^2_{C^{\gamma}(\Omega)}+\|q\|_{L^\infty(\Omega)}+ \|\psi\|_{C^1\left(\overline\Omega\right)}\right)\\
&\leq C \left( \|u\|^2_{C^{\gamma}(\Omega)}+\|p\|_{L^\infty(\Omega)}+ \|\psi\|_{C^1\left(\overline\Omega\right)}\right)\\
&\leq C \left( \|u\|^2_{C^{\gamma}(\Omega)}+\|p\|_{L^\infty(\Omega)} \right),
\end{align*}
where in the third inequality we also used  $\|q\|_{L^\infty(\Omega)}\leq \|p\|_{L^\infty(\Omega)}+\|\psi\|_{L^\infty(\Omega)}$.
\end{proof}

Thus the rest of the paper is devoted to the proof of \cref{t:main_boundary_local_noboundary}, that is, as already pointed out, to the proof of the estimate \eqref{p_contin_est_boundary_local_noboundary}.

\subsection{Straightening of the boundary} 

We now reduce the proof of~\eqref{p_contin_est_boundary_local_noboundary} to the flat-boundary case by straightening the boundary using the results of~\cref{sec.geometry}. 

Let $U$ be a neighborhood of  $x_0\in \partial \Omega$. We can assume $U$ to be small enough so that, in $U\cap \Omega$, we have local geodesic coordinates $(r,\theta^2, \dots, \theta^d)=(r,\theta) \in (-r_0,r_0) \times \Theta$ such that the metric $g^{ij}$ in these coordinates satisfies the properties of \cref{prop.geometry}.
Therefore~\eqref{eq.pressureEqNoBoundary} (denoting the solution by $p$, thus slightly abusing notation) now reads as
\[
     -\frac{1}{G} \partial_i\left(G g^{ij}\partial_j p\right) = \frac{1}{G} \partial^{2}_{ij}\left(G u^iu^j\right)+A \quad \text{ in }\Omega,
\]
with the boundary condition 
\[
    \partial_r p(0,\theta)=0\quad \text{ for all } \theta \in \Theta,
\]
where $G=G(r,\theta)=\sqrt{\det g(r,\theta)} \in C^{0,\delta}$.
Pay attention: here $u^i, u^j$ refer to $u^r$ and $u^{\theta^i}$, as well as the partial derivatives $\partial_i$ stands for $\partial_r$ and $\partial_{\theta^i}$. Moreover, the condition $u\cdot n = 0$ on $U \cap \partial\Omega$ reads $u^r(0,\theta)=0$. 
Multiplying the equation in the interior by $G$ leads to
\begin{equation}
    \label{eq.pressureGeodesic}
    \left\{
    \begin{array}{rcll}
        -\partial_i\left(G g^{ij}\partial_j p\right) &=&\partial^{2}_{ij}\left(G u^iu^j\right)+GA  & \text{ in }\Omega\cap U\\[1ex]
        \partial_r p(0,\theta)&=&0 & \text{ for all } \theta \in \Theta. 
    \end{array}
    \right.
\end{equation}

\begin{remark}\label{r:constant_neglect}
Note that the function $GA$ plays a purely compatibility role only. Indeed,  on a $C^{2,1}$ domain
\[
\|GA\|_{C^{0,1}(\Omega\cap U)}\leq |A|\|G\|_{C^{0,1}(\Omega\cap U)}\leq C(\Omega)\|u\|^2_{C^0(\overline \Omega)},
\]
which is a term that can be easily incorporated in all the estimates in \cref{s:main_proof_new} below, where we prove the double regularity of the `modified' pressure equation in the local geodesic coordinates. Thus, from now on, we can forget about such a very regular term and assume that $A=0$.
\end{remark}

\subsection{Extension for negative \texorpdfstring{$r$}{r}} 

Now we extend the functions for $r<0$ as follows: for any $r>0$ we let $\tilde{g}(-r,\theta)\coloneqq g(r,\theta)$, $\tilde{u}^r(-r,\theta)\coloneqq -u^r(r,\theta)$, $\tilde{u}^{\theta}(-r,\theta)\coloneqq u^{\theta}(r,\theta)$ and $\tilde{p}(-r,\theta)\coloneqq p(r,\theta)$. 

\begin{remark} Note that we evenly extend $g$ and $u^{\theta}$ because we have no information about their value at $r=0$. On the other hand, we exploit the fact that $u^r(0,\theta)=0$ by extending $u^r$ oddly. Similarly, since $\partial_rp(0,\theta)=0$, we want $\partial_rp$ to be extended oddly, so $p$ should be extended evenly.
\end{remark} 

Thanks to the boundary condition satisfied by $u$, we have the following 

\begin{lemma}[Regularity of the extensions]\label{l:regular_extension} Let $\delta \in (0,1]$ and $\Omega$ a $C^{2,\delta}$ bounded domain. 
Outside $\{r=0\}$ all the functions above are $C^{1,\delta}$.  Moreover we have the following global regularities:
\begin{enumerate}
    \item $ \tilde{p},\tilde u\in \Lip_{r,\theta}$ and $\partial_r \tilde u^r\in C^0_{r,\theta}$;
    \item $\tilde{g}^{\theta\theta}\in C^{0,\delta}_{r,\theta}$;
    \item $\partial_{\theta ^i}\tilde{u}^r$, $\partial_{\theta^i}\tilde{u}^{\theta}$ and $\partial_{\theta^i}\tilde{p}$ are $C^0_{r,\theta}$ functions.
\end{enumerate}
\end{lemma}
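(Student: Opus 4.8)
The plan is to verify each claimed regularity by separating the behaviour away from the interface $\{r=0\}$ from the behaviour across it. Away from $\{r=0\}$ there is nothing to prove: the original functions $g^{\theta\theta}$, $u^r$, $u^\theta$, $p$ are $C^{1,\delta}$ (indeed $p\in C^{2,\delta}$ and $g^{\theta\theta}\in C^{0,\delta}$ by \cref{prop.geometry}) on $\{r>0\}$, and the reflection $r\mapsto -r$ composed with a sign flip is a $C^\infty$ diffeomorphism, so each extension is $C^{1,\delta}$ on $\{r<0\}$ as well. Hence the only real content is the matching (i.e.\ the regularity) of the extended functions \emph{through} the hyperplane $\{r=0\}$, and for this the $\theta$ variable plays no role: one fixes $\theta$ and studies a one-variable extension problem in $r$, uniformly in $\theta$.

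For the continuity/Lipschitz statements in (1) and (3), the key elementary fact is: if $f\in C^1([0,r_0))$ then its even extension is Lipschitz across $0$ (with constant $\|f'\|_{L^\infty}$) and its odd extension is $C^1$ across $0$ \emph{provided} $f(0)=0$, in which case moreover the extended derivative is continuous and equals $-f'(-r)$ for $r<0$. First I would apply this with $f=p(\cdot,\theta)$: since $p\in C^{2,\delta}$, $p(\cdot,\theta)\in C^1$, so the even extension $\tilde p$ is Lipschitz in $r$ uniformly in $\theta$; since $\partial_{\theta^i}p\in C^{1,\delta}\subset C^1$ in $r$, its even extension $\partial_{\theta^i}\tilde p = \widetilde{\partial_{\theta^i}p}$ is at least $C^0$, giving the $p$-part of (3); and Lipschitz continuity in $\theta$ follows from $\nabla_\theta p\in C^0$. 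The same reasoning applied to $u^\theta$ (even extension, $u^\theta\in C^1$) gives $\tilde u^\theta\in\Lip_{r,\theta}$ and $\partial_{\theta^i}\tilde u^\theta\in C^0_{r,\theta}$. For $u^r$ one uses the boundary condition $u^r(0,\theta)=0$: the odd extension $\tilde u^r$ of a $C^1$ function vanishing at $0$ is $C^1$ across $\{r=0\}$, in particular Lipschitz, and $\partial_r\tilde u^r$ is continuous with $\partial_r\tilde u^r(0,\theta)=\partial_r u^r(0,\theta)$; this is exactly $\partial_r\tilde u^r\in C^0_{r,\theta}$. Finally $\partial_{\theta^i}u^r(0,\theta)=\partial_{\theta^i}\big(u^r(0,\theta)\big)=0$ as well, so $\partial_{\theta^i}u^r$ is itself an odd-extendable $C^1$ function vanishing on $\{r=0\}$, whence $\partial_{\theta^i}\tilde u^r=\widetilde{\partial_{\theta^i}u^r}$ is continuous, completing (3).

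For (2), the matrix $g^{\theta\theta}$ is extended \emph{evenly}, and by \cref{prop.geometry} its coefficients are $C^{0,\delta}$ on $\{r\ge 0\}$ (uniformly in $\theta$), so I would just invoke the fact that the even extension of a $C^{0,\delta}$ function is $C^{0,\delta}$ across the hyperplane: for points on opposite sides of $\{r=0\}$ one writes $|\tilde g^{\theta\theta}(r,\theta)-\tilde g^{\theta\theta}(r',\theta')| = |g^{\theta\theta}(|r|,\theta)-g^{\theta\theta}(|r'|,\theta')|$ and uses $\big||r|-|r'|\big|\le|r-r'|$ together with the one-sided H\"older seminorm. This gives $\tilde g^{\theta\theta}\in C^{0,\delta}_{r,\theta}$.

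The only mildly delicate point — and the one I would state carefully — is that the pieces (1)--(3) must be understood as \emph{global} regularity on a full two-sided neighbourhood $(-r_0,r_0)\times\Theta$, i.e.\ one must genuinely check the matching condition at $r=0$ rather than merely on each side; but since $p\in C^{2,\delta}$, $u\in C^{1,\delta}$ up to the boundary, and $u^r$, $\partial_{\theta^i}u^r$ vanish on $\partial\Omega$, the elementary one-dimensional even/odd reflection lemmas quoted above close every case. I would remark that this is precisely where the parity choices of \cref{l:regular_extension}'s preamble are used: evenly extending the objects with no boundary information (namely $g$, $u^\theta$, $p$, and $\partial_{\theta^i}$ of everything) costs at most one derivative of regularity at the interface, while oddly extending $u^r$ (and automatically $\partial_{\theta^i}u^r$, which also vanishes there) preserves $C^1$ regularity thanks to the no-flow condition. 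No genuine obstacle arises; the proof is a bookkeeping exercise once the reflection lemmas are in place.
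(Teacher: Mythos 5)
Your argument is correct and is essentially the paper's proof, just with the one-dimensional even/odd reflection lemmas spelled out: the paper likewise concludes (1) from $u^r(0,\theta)=0$ and $u,p\in C^1$ up to the boundary, (2) from the even extension of a $C^{0,\delta}$ function, and (3) from the fact that the extension acts in $r$ only, so $\theta$-derivatives commute with it. One cosmetic caveat: in the geodesic coordinates $p$ is only $C^{1,\delta}$ up to $\{r=0\}$ (the coordinate map on a $C^{2,\delta}$ domain is just $C^{1,\delta}$), not $C^{2,\delta}$ as you write, but since your reflection arguments only use $C^1$ regularity (and continuity of the tangential derivatives) this does not affect the proof.
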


\begin{proof} 
Since $u^r(0,\theta)=0$ and $u\in C^1(\overline \Omega)$, then clearly $\tilde u\in \Lip_{r,\theta}$ and $\partial_r\tilde u^r\in C^0_{r,\theta}$. The same reasoning applies to $\tilde p$, giving $(1)$. Since $\Omega$ is a $C^{2,\delta}$ domain, $g\in C^{0,\delta}$, thus all the $\theta$ components of its even extension satisfy $\tilde{g}^{\theta\theta}\in C^{0,\delta}_{r,\theta}$, which gives $(2)$. 
%Moreover, since we are evenly extending $g$ in the $r$ variable only, it is also clear that $ \tilde{g}^{\theta^i\theta^j}(r,\cdot) \in C^{1,\delta}_{\theta}$ for every fixed $r\in \R$ and $\partial_{\theta^k}\tilde{g}^{\theta^i\theta^j}(\cdot,\theta) \in C^{\delta}_{r}$ if $\theta$ is fixed. In particular, this gives $(4)$. 
Moreover, since we are evenly (or oddly for $u^r$) extending  in the $r$ variable only, it is also clear that also $(3)$ holds true, giving the $C^1$ regularity of $\tilde u$ and $\tilde p$ in the $\theta$ variables. 
\end{proof}

Consequently, whenever $\partial \Omega \in C^{2,1}$, we have that $\tilde{g}^{\theta\theta}\in \Lip_{r,\theta}$. In particular $\tilde{G}\coloneqq\sqrt{\det \tilde{g}}$ is even in $r$ and globally Lipschitz.
\jump

We now have to make sure that the extension process did not create distributional jumps at the PDE level, that is $\tilde p$ and $\tilde u$ solve, on the whole space, the same equation as for $r>0$.  This will be a consequence of the following lemma, in which we will keep the variables notation $r,\theta$ as introduced above. 
\begin{lemma}\label{L:divergence_cont_vector}
Suppose $V\in C^0(\R^d)$ satisfies $\diver V=0$ in $\mathcal D'(\{r\neq 0\})$. Then $\diver V=0$ in $\mathcal D'(\R^d)$.
\end{lemma}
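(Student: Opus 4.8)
The plan is to show that the distributional divergence of $V$, which a priori could be a distribution supported on the hyperplane $\{r=0\}$, in fact vanishes. The standard structure theorem for distributions supported on a hyperplane says that $\diver V = \sum_{k=0}^{m} c_k(\theta)\, \partial_r^k \delta_{\{r=0\}}$ for some finite order $m$ and some distributions $c_k$ on $\{r=0\}$; so the heart of the matter is a boundedness/order argument showing that, because $V$ is merely a continuous function, the only term that can survive is the one with a single derivative of the Dirac mass, and even that one must vanish.

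First I would localize: fix a test function $\varphi\in C^\infty_c(\R^d)$ and write, by definition,
\[
\langle \diver V,\varphi\rangle = -\int_{\R^d} V\cdot\nabla\varphi\,dx
= -\lim_{\eps\to 0^+}\left(\int_{r>\eps}+\int_{r<-\eps}\right) V\cdot\nabla\varphi\,dx,
\]
the passage to the limit being justified by dominated convergence since $V\in C^0$ is locally bounded. On each region $\{\pm r>\eps\}$ I would integrate by parts; since $\diver V=0$ on $\{r\neq 0\}$, the interior terms vanish and only the boundary contributions on $\{r=\pm\eps\}$ remain, giving
\[
\langle \diver V,\varphi\rangle
= -\lim_{\eps\to 0^+}\int_{\{r=\eps\}} V\cdot e_r\,\varphi\,d\sigma
+ \lim_{\eps\to 0^+}\int_{\{r=-\eps\}} V\cdot e_r\,\varphi\,d\sigma,
\]
where $e_r$ denotes the unit vector in the $r$-direction (orientations chosen consistently). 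Because $V$ and $\varphi$ are continuous, each of these surface integrals converges to the same integral $\int_{\{r=0\}} (V\cdot e_r)\,\varphi\,d\sigma$ over the common limiting hypersurface, so the two contributions cancel and $\langle\diver V,\varphi\rangle=0$.

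I expect the only genuinely delicate point to be making the integration-by-parts step fully rigorous in the geodesic coordinate frame: one must check that the identity $\diver V=0$ in $\mathcal{D}'(\{r\neq 0\})$ can be integrated against $\varphi$ on the one-sided neighborhoods with the correct volume element $G\,dr\,d\theta$ and the correct surface measure on $\{r=\pm\eps\}$, and that the (possibly only Lipschitz) metric coefficients do not obstruct this. An alternative, perhaps cleaner, route avoids coordinates altogether: mollify $V$ by $V_\eta:=V*\rho_\eta$ on a slightly smaller domain, note $\diver V_\eta$ is a continuous function equal to zero on $\{|r|>\eta\}$, test against $\varphi$ and use $\langle \diver V_\eta,\varphi\rangle=-\int V_\eta\cdot\nabla\varphi\to -\int V\cdot\nabla\varphi$ while simultaneously $\int (\diver V_\eta)\varphi$ is supported in the shrinking slab $\{|r|\le\eta\}$ and hence tends to $0$ by uniform boundedness of $\diver V_\eta$... except $\diver V_\eta$ need not be uniformly bounded. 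So one instead keeps the derivative on $\varphi$: $\int(\diver V_\eta)\varphi = -\int V_\eta\cdot\nabla\varphi = -\int V_\eta\cdot\nabla\varphi$, and since $\diver V_\eta$ vanishes off the slab, $-\int V_\eta\cdot\nabla\varphi = \int_{|r|\le\eta}(\diver V_\eta)\varphi$; writing $\int_{|r|\le\eta}(\diver V_\eta)\varphi = -\int_{|r|\le\eta} V_\eta\cdot\nabla\varphi + (\text{boundary terms on } r=\pm\eta)$ and letting $\eta\to0$, the volume term vanishes (integrand bounded, slab measure $\to0$) and the two boundary terms cancel in the limit by continuity of $V$. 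Either way the conclusion is $\diver V=0$ in $\mathcal{D}'(\R^d)$, which is what we wanted.
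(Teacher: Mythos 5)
Your main argument is exactly the paper's proof: test against $\varphi\in C^\infty_c(\R^d)$, integrate by parts on $\{|r|>\eps\}$ using $\diver V=0$ there, and observe that the two boundary terms on $\{r=\pm\eps\}$ cancel in the limit $\eps\to 0$ by continuity of $V$. The extra detours (structure theorem for distributions on a hyperplane, the mollification variant, worries about the metric volume element, which is irrelevant here since the lemma concerns the plain Euclidean divergence) are unnecessary but harmless, so the proposal is correct and essentially identical to the paper's argument.
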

\begin{proof}
Let $\varphi\in C^\infty_c(\R^d)$. Since $\diver V=0$ in $\mathcal D'(\{r\neq 0\})$ and $V$ is continuous,  for every $\eps>0$ we have 
$$
\int_{\{-\eps<r<\eps\}^c} V\cdot \nabla \varphi=\int_{\{r=\eps\}} V\cdot n \varphi - \int_{\{r=-\eps\}} V\cdot n \varphi,
$$
where $n=(1,0,\dots,0)$. Thus
\begin{align*}
\langle \diver V,\varphi \rangle &=-\int V\cdot \nabla \varphi =-\lim_{\eps\rightarrow 0} \int_{\{-\eps<r<\eps\}^c} V\cdot \nabla \varphi\\
&=\lim_{\eps\rightarrow 0} \left( \int_{\{r=\eps\}} V\cdot n \varphi - \int_{\{r=-\eps\}} V\cdot n \varphi \right).
\end{align*}
The last limit vanishes since $V\in C^0(\R^d)$, from which we conclude that $\diver V=0$ in $\mathcal D'(\R^d)$ by the arbitrariness of $\varphi$.
\end{proof}

Then we have the following

\begin{lemma}[Divergence of the extension]\label{l:extension_divfree} In a $C^{2,\delta}$ domain $\Omega$, $\delta \in (0,1]$, such extension of $u$ preserves the incompressibility, \textit{i.e.}, $\diver \tilde{u}=0$, where the divergence is taken in the $\tilde{g}$ metric. 
\end{lemma}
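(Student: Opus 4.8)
The plan is to reduce the statement to an application of \cref{L:divergence_cont_vector}. The key point is that, in the $\tilde g$ metric, the divergence of a vector field $F = F^i \partial_i$ is given by formula \eqref{eq.divForumation}, namely $\diver_{\tilde g} F = (\det\tilde g)^{-1/2}\partial_i\big(F^i(\det\tilde g)^{1/2}\big) = \tilde G^{-1}\partial_i(\tilde G\, F^i)$. Since $\tilde G$ is bounded away from zero (by part (2) of \cref{prop.geometry}, the metric is uniformly elliptic, hence $\det\tilde g \ge c^d > 0$) and globally Lipschitz on $U$, the condition $\diver_{\tilde g}\tilde u = 0$ is equivalent to $\diver_{\mathrm{eucl}}(\tilde G\,\tilde u) = 0$, where the latter is the \emph{flat} divergence of the Euclidean vector field $W \coloneqq \tilde G\,\tilde u = (\tilde G\,\tilde u^r, \tilde G\,\tilde u^\theta)$. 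So it suffices to show that $W \in C^0$ near $\{r = 0\}$ and that $\diver W = 0$ in $\mathcal D'(\{r \ne 0\})$; then \cref{L:divergence_cont_vector} gives $\diver W = 0$ in $\mathcal D'(\R^d)$, i.e. $\diver_{\tilde g}\tilde u = 0$ globally.

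First I would check continuity of $W$: by \cref{l:regular_extension}(1), $\tilde u \in \Lip_{r,\theta}$ and in particular $\tilde u \in C^0$, and $\tilde G$ is continuous (indeed Lipschitz, being the square root of the determinant of the continuous, uniformly positive-definite matrix $\tilde g$), so $W = \tilde G\,\tilde u \in C^0$ across $\{r=0\}$. Second, away from $\{r=0\}$ the functions are $C^{1,\delta}$ and, by construction, for $r > 0$ the identity $\diver_g u = 0$ holds (this is the hypothesis on $u$ in $\Omega \cap U$ rewritten in the geodesic chart), hence $\diver_{\mathrm{eucl}} W = \tilde G\,\diver_{\tilde g}\tilde u = 0$ there; for $r < 0$ the same identity holds by the symmetry of the extension — one checks that the even extension of $g$ and $u^\theta$ together with the odd extension of $u^r$ is exactly the pullback of the $r>0$ data under the reflection $(r,\theta)\mapsto(-r,\theta)$, a diffeomorphism that reverses orientation in the $r$-direction, under which the divergence-free condition is preserved (the sign flip in $u^r$ compensates the sign flip in $\partial_r$). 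So $\diver_{\mathrm{eucl}} W = 0$ in $\mathcal D'(\{r \ne 0\})$.

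Applying \cref{L:divergence_cont_vector} to $V = W$ gives $\diver_{\mathrm{eucl}} W = 0$ in $\mathcal D'(\R^d)$, and dividing by the positive Lipschitz function $\tilde G$ — which is legitimate at the level of distributions since multiplication by a Lipschitz function is continuous on $\mathcal D'$ — yields $\diver_{\tilde g}\tilde u = \tilde G^{-1}\diver_{\mathrm{eucl}} W = 0$ in $\mathcal D'(\R^d)$, as claimed.

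I expect the main obstacle to be purely bookkeeping: verifying cleanly that the prescribed parities (even for $g$, $u^\theta$, $p$; odd for $u^r$) are precisely the ones that make $W$ the reflection-invariant extension of a divergence-free field, so that no singular mass is created on the hyperplane $\{r=0\}$. The continuity of $W$ is what rules out a surface-measure term in $\diver W$; the explicit $\eps \to 0$ computation that makes this rigorous is exactly the content of \cref{L:divergence_cont_vector}, which is why reducing to it is the cleanest route. One should also note for later use that the \emph{oddness} of $\tilde u^r$ and $\tilde p$ across $\{r=0\}$ is what will be needed, when the PDE itself is extended, to avoid distributional jumps in the elliptic equation \eqref{eq.pressureGeodesic} — but that is addressed separately.
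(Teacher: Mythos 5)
Your proposal is correct and follows essentially the same route as the paper: rewrite $\diver_{\tilde g}\tilde u$ via \eqref{eq.divForumation} as the flat divergence of $\tilde G\,\tilde u$, verify it vanishes for $r>0$ by hypothesis and for $r<0$ by the parity of the extension, note $\tilde G\,\tilde u\in C^0$ thanks to \cref{l:regular_extension}, and conclude with \cref{L:divergence_cont_vector}. Your spelled-out reflection computation for $r<0$ and the remark that $\tilde G$ is bounded away from zero are just more explicit versions of steps the paper leaves implicit.
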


\begin{proof} First, let us compute the divergence at $(r,\theta)$ for $r>0$.
In this case, there holds $\diver \tilde u = \frac{1}{\tilde G} \partial_i (\tilde G \tilde{u}^i) = \diver u(r,\theta)=0$. Similarly, for $r<0$, the parity properties of $u$ give the same result. By \cref{l:regular_extension} we have $\tilde G \tilde u\in C^0$, and  \cref{L:divergence_cont_vector} concludes the proof. 
\end{proof}

Similarly, we need to make sure that distributional jumps do not appear in the elliptic PDE. Since the latter can be rewritten as 
$$
\partial_i \left( \tilde{G}\tilde{g}^{ij}\partial_j\tilde{p} -\partial_{j}(\tilde{G}\tilde{u}^i\tilde{u}^j) \right)=0\qquad \text{in } \mathcal D'(\{r\neq 0\}),
$$
again by \cref{L:divergence_cont_vector}, it is enough to check that both $\tilde{G}\tilde{g}^{ij}\partial_j\tilde{p}$ and  $\partial_{j}(\tilde{G}\tilde{u}^i\tilde{u}^j)$ are continuous, for all $i=1,\dots,d$.

\begin{lemma}[Laplacian of the extension] Let $\Omega$ a $C^{2,\delta}$ domain, for some $\delta \in (0,1]$. For all $i=1,\dots,d$ it holds $\tilde{G}\tilde{g}^{ij}\partial_j\tilde{p}\in C^0_{r,\theta}$. 
\end{lemma}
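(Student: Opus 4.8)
The plan is to use the block-diagonal structure of the inverse metric $\tilde g$ from \cref{prop.geometry}, together with the regularity of the extended functions recorded in \cref{l:regular_extension}, and to treat the normal direction $i=1$ separately from the tangential directions $i\geq2$, since only the former genuinely uses the boundary condition.

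First I would recall that $p$ is a classical $C^{2,\delta}$ solution up to the boundary and that the geodesic change of coordinates is itself $C^{1,\delta}$, so that the derivatives $\partial_r p$ and $\partial_{\theta^j}p$ extend continuously to $\{r=0\}$, and there $\partial_r p(0,\theta)=0$ by the (straightened) Neumann condition in \eqref{eq.pressureGeodesic}. Since $\tilde g$ is block-diagonal with $\tilde g^{11}=1$ and $\tilde g^{1j}=\tilde g^{j1}=0$ for $j\geq2$, the quantity $\tilde G\,\tilde g^{ij}\partial_j\tilde p$ equals $\tilde G\,\partial_r\tilde p$ when $i=1$, and equals $\sum_{j\geq2}\tilde G\,\tilde g^{\theta^i\theta^j}\,\partial_{\theta^j}\tilde p$ when $i\geq2$; it thus suffices to check the continuity of these two expressions across $\{r=0\}$.

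For $i=1$: since $p$ is extended evenly in $r$, the function $\partial_r\tilde p$ is the \emph{odd} extension of $\partial_r p$; because $\partial_r p$ is continuous up to $\{r=0\}$ and vanishes there, its odd extension is continuous on $U$. Together with $\tilde G\in C^{0,\delta}$ (continuous, by \cref{l:regular_extension}), the product $\tilde G\,\partial_r\tilde p$ is continuous. For $i\geq2$: \cref{l:regular_extension} gives $\tilde g^{\theta^i\theta^j}\in C^{0,\delta}_{r,\theta}$ and $\partial_{\theta^j}\tilde p\in C^0_{r,\theta}$ (these being \emph{even} extensions of functions continuous up to $\{r=0\}$, hence continuous across it), and again $\tilde G\in C^{0,\delta}$; hence each summand, and therefore the finite sum, is continuous.

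The delicate point, and the main obstacle, is the normal component $i=1$: the continuity of $\partial_r\tilde p$ across $\{r=0\}$ does not follow from the interior equation alone but hinges on the homogeneous Neumann datum $\partial_r p(0,\theta)=0$ — equivalently, on the tangency $u\cdot n=0$ that generated it after the reduction of \cref{l:remove_boundary}; without this vanishing, the odd extension of $\partial_r p$ would have a jump at $\{r=0\}$, and both the present lemma and the subsequent application of \cref{L:divergence_cont_vector} to the vector field $V^i=\tilde G\,\tilde g^{ij}\partial_j\tilde p-\partial_j(\tilde G\,\tilde u^i\tilde u^j)$ would break down. The tangential components, by contrast, are unconditionally continuous.
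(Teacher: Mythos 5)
Your proof is correct and follows essentially the same argument as the paper: continuity of $\tilde G\tilde g^{ij}$ from \cref{l:regular_extension}, continuity of the tangential derivatives $\partial_{\theta^j}\tilde p$, and continuity of $\partial_r\tilde p$ as the odd extension of a function vanishing on $\{r=0\}$ thanks to the homogeneous Neumann condition. The only difference is cosmetic: you invoke the block-diagonal form of $\tilde g$ to split $i=1$ from $i\geq 2$, whereas the paper simply notes that $\tilde G\tilde g^{ij}\in C^0_{r,\theta}$ and $\partial_j\tilde p\in C^0_{r,\theta}$ for every $j$, so the product is continuous without using that structure.
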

\begin{proof} Clearly $\tilde G \tilde g^{ij}\in C^0_{r,\theta}$ by \cref{l:regular_extension}, thus it is enough to check that $\partial_j \tilde p\in C^0_{r,\theta}$ for all $j=1,\dots,d$. By \cref{l:regular_extension} we have $\partial_{\theta^j}\tilde p \in C^0_{r,\theta}$, for all $j\neq 1$. Moreover, since  $\partial_r \tilde{p}$ is odd in $r$ and $\partial_r p(0,\theta)=0$, we also that $\partial_r\tilde p\in C^0_{r,\theta}$.
\end{proof}

We just need to exclude the possible jumps for the double divergence term.

\begin{lemma}[Double divergence of the extension]
\label{l:dd ext}
Let $\Omega$ a $C^{2,1}$ domain.  For all $i=1,\dots,d$ it holds  $\partial_{j}(\tilde{G}\tilde{u}^i\tilde{u}^j)\in C^0_{r,\theta}$. 
\end{lemma}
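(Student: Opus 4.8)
The plan is to exploit the incompressibility of $\tilde u$ to rewrite the (summed) double-divergence expression as a single first-order term whose continuity can be read off from the parities of the extensions and from \cref{l:regular_extension}. Since outside $\{r=0\}$ the functions $\tilde u,\tilde p$ are $C^{1,\delta}$ and $\tilde G\in\Lip_{r,\theta}$, there the quantity $\sum_j\partial_j(\tilde G\tilde u^i\tilde u^j)$ is classically defined and continuous; the only point is to control it across the interface $\{r=0\}$.

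Recall that, for $\partial\Omega\in C^{2,1}$, we already know $\tilde G\in\Lip_{r,\theta}$, so $\tilde G\tilde u^j$ and $\tilde u^i$ are Lipschitz and the Leibniz rule holds in the sense of distributions. Thus, near $x_0$,
\[
\sum_{j=1}^{d}\partial_j\big(\tilde G\,\tilde u^i\tilde u^j\big)
=\tilde u^i\sum_{j=1}^{d}\partial_j\big(\tilde G\,\tilde u^j\big)
+\tilde G\sum_{j=1}^{d}\tilde u^j\,\partial_j\tilde u^i
\qquad\text{in }\mathcal{D}'(\R^d).
\]
By \cref{l:extension_divfree} the divergence of $\tilde u$ in the metric $\tilde g$ vanishes on all of $\R^d$, i.e.\ $\sum_j\partial_j(\tilde G\tilde u^j)=0$; hence the first term drops and it remains to prove that
\[
\tilde G\Big(\tilde u^r\,\partial_r\tilde u^i+\sum_{j=2}^{d}\tilde u^{\theta^j}\,\partial_{\theta^j}\tilde u^i\Big)\in C^0_{r,\theta}
\qquad\text{for every }i.
\]

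The factor $\tilde G$ is continuous, so one only has to examine the bracket. In the tangential sum each $\tilde u^{\theta^j}$ is Lipschitz and each $\partial_{\theta^j}\tilde u^i$ is continuous by \cref{l:regular_extension}(3) — we differentiate only in the evenly-extended variables $\theta$ — so this part is continuous. For $\tilde u^r\partial_r\tilde u^i$ I would split into two cases. If $i=1$, then $\tilde u^i=\tilde u^r$ and $\partial_r\tilde u^r\in C^0_{r,\theta}$ by \cref{l:regular_extension}(1), hence $\tilde u^r\partial_r\tilde u^r$ is continuous. If $i\ge 2$, then $\tilde u^i=\tilde u^{\theta^i}$, whose even extension has an $r$-derivative $\partial_r\tilde u^{\theta^i}$ that is merely bounded (odd in $r$, with a possible jump at $\{r=0\}$); however $\tilde u^r$ is continuous and vanishes identically on $\{r=0\}$, because $u^r(0,\theta)=0$ and $\tilde u^r$ is the \emph{odd} extension of $u^r$. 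Therefore $|\tilde u^r\,\partial_r\tilde u^{\theta^i}|\le\|\partial_r\tilde u^{\theta^i}\|_{L^\infty}\,|\tilde u^r|\to 0$ as one approaches $\{r=0\}$, while for $r\ne 0$ both factors are continuous; so $\tilde u^r\partial_r\tilde u^{\theta^i}$ extends continuously by $0$ across the interface. Combining these observations yields $\sum_j\partial_j(\tilde G\tilde u^i\tilde u^j)\in C^0_{r,\theta}$ for all $i$, as claimed.

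The only genuine obstacle is the term $\tilde u^r\partial_r\tilde u^{\theta^i}$: the $r$-derivative of the even extension of the tangential velocity really does jump across $\{r=0\}$, and the cancellation of that jump hinges on $\tilde u^r$ vanishing on the boundary — i.e.\ on the no-flow condition $u\cdot n=0$, which is the one place where the boundary datum enters. One should also be slightly careful that the Leibniz step above is legitimate at the distributional level, which is precisely why the $C^{2,1}$ hypothesis on $\partial\Omega$ (guaranteeing $\tilde G,\tilde g^{\theta\theta}\in\Lip_{r,\theta}$) is used here, and why a rougher domain would require a different argument.
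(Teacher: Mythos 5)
Your proposal is correct and follows essentially the same route as the paper: rewrite $\partial_j(\tilde G\tilde u^i\tilde u^j)=\tilde G\tilde u^j\partial_j\tilde u^i$ via the Leibniz rule for Lipschitz functions (the paper invokes Rademacher's theorem for the same purpose) and the extended divergence-free identity $\partial_j(\tilde G\tilde u^j)=0$, then check continuity term by term using \cref{l:regular_extension} and the fact that $\tilde u^r$ vanishes on $\{r=0\}$ to kill the only possible jump, namely that of $\partial_r\tilde u^{\theta^i}$. No gaps.
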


\begin{proof} First, note that $\tilde{G}\tilde{u}^i\tilde{u}^j$ is a Lipschitz function, therefore it has no jump across $\{r=0\}$.
Moreover, by Rademacher's Theorem, the distributional derivative $\partial_j(\tilde{G}\tilde{u}^i\tilde{u}^j)$ agrees with its pointwise computation
\[
    \partial_{j}(\tilde{G}\tilde{u}^i\tilde{u}^j) = \partial_j(\tilde{G}\tilde{u}^j)\tilde{u}^i + \tilde{G}\tilde{u}^j \partial_j \tilde{u}^i = \tilde{G}\tilde{u}^j \partial_j \tilde{u}^i,
\]
where we used the divergence-free condition $\partial_j(\tilde{G}\tilde{u}^j)=0$. 
We now only need to make sure that $\tilde{G}\tilde{u}^j \partial_j \tilde{u}^i\in C^0_{r,\theta}$ for all $i=1,\dots,d$.
Now observe that, thanks to \cref{l:regular_extension}, the functions $\partial_j\tilde{u}^{i}$ are continuous when $j\geq 2$ (or, equivalently, all the derivatives that are not in the $r$ direction), so that when $(i,j) \in \{1, \dots, d\} \times \{2, \dots, d\}$ the function $\tilde{G}\tilde{u}^j\partial_j\tilde{u}^i$ is continuous and has no jumps.
Finally, let us deal with the case $j=1$. Since $\tilde u^1$ is uniformly continuous with $\tilde u^1(0,\theta)=0$ and $\partial_1 \tilde{u}^i$ is bounded and also continuous on $\{r\neq 0\}$ (in the case $i=1$ even $\partial_1\tilde{u}^1$ is continuous by \cref{l:regular_extension}), it follows that the function $\tilde{G}\tilde{u}^1\partial_1\tilde{u}^i$ is continuous. 
Therefore there is no jump of $\tilde{G}\tilde{u}^1\partial_1\tilde{u}^i$ across $\{r=0\}$ for all $i=1,\dots,d$. This proves our claim. 
\end{proof}

\begin{remark} 
\cref{l:dd ext} uses the straightening of the boundary with geodesic coordinates in an essential way.
Indeed, the terms $\tilde{u}^1\partial_1\tilde{u}^i$ for 
 $i=1,\dots,d$ do not produce distributional jumps across $\{r=0\}$ because $\tilde u^1(0,\theta)=0$.
 Also the terms $\tilde{u}^j\partial_j\tilde{u}^i$ for $(i,j) \in \{1, \dots, d\} \times \{2, \dots, d\}$ are fine thanks to the additional regularity in the tangential direction given by \cref{l:regular_extension}.
\end{remark}

Combining the above lemmas (and keeping in mind that, in virtue of \cref{r:constant_neglect}, we can neglect the constant $A$) we infer that, on $U=(-r_0,r_0) \times \Theta$, the functions $\tilde{u}$ and $\tilde{p}$ satisfy  
\begin{equation}
    \label{eq.pressureExtended}
    -\partial_i\left(\tilde{g}^{ij}\tilde{G}\partial_j\tilde{p}\right) = \partial_{ij}^2\left(\tilde{G}\tilde{u}^i\tilde{u}^j\right)\quad \text{in } U.
\end{equation}

\subsection{Extension to the whole space}

Our goal is now to extend the solution $\tilde{p}$ and the right-hand side datum $\tilde{u}$ of~\eqref{eq.pressureExtended} to $\mathbb{R}^d$. In order to do so, let $\psi$ be a (non-negative, smooth) localization function  supported in  $(-r_0,r_0) \times \Theta$ and which equals $1$ on  $U_0\coloneqq(-r_0/2,r_0/2)\times \Theta /2$.

Let $\bar{p}\coloneqq\psi \tilde{p}$, $\bar{u} \coloneqq\psi \tilde{u}$ defined as functions on $\mathbb{R}^d$. Also introduce $\bar{G}=\psi \tilde{G}$ and $\bar{g}^{ij}=\psi\tilde{g}^{ij}$. Also, let  $\tilde{\psi}$ be another localization function such that $\operatorname{supp} \tilde \psi\subset \{\psi \equiv 1\}$ and a third one $\tilde{\tilde{\psi}}$ supported in $\{\tilde{\psi} \equiv 1\}$. In particular, observe that $\tilde{\psi} \partial_i \psi = 0$. Therefore, we can compute 
\[
    \tilde{\tilde{\psi}} \partial_i\left(\tilde{\psi}\bar{G}\bar{g}^{ij}\partial_j\bar{p}\right)=\tilde{\tilde{\psi}} \partial_i \left(\tilde{\psi}\tilde{G}\tilde{g}^{ij}(\partial_j\psi \tilde{p} + \psi\partial_j\tilde{p})\right),
\]
where we used that $\tilde{\psi} \psi = \tilde{\psi}$. Now, because $\tilde{\psi} \partial_j \psi = 0$, we obtain  
\[
     \tilde{\tilde{\psi}} \partial_i \left(\tilde{\psi}\bar{G}\bar{g}^{ij}\partial_j\bar{p}\right)=\tilde{\tilde{\psi}}\partial_i \left(\tilde{\psi}\tilde{G}\tilde{g}^{ij}\partial_j\tilde{p}\right),
\]
so that, finally, using $\tilde{\tilde{\psi}}\tilde{\psi}=\tilde{\tilde{\psi}}$ and $\tilde{\tilde{\psi}} \partial_j \tilde{\psi} = 0$, we obtain 
\[
    \tilde{\tilde{\psi}} \partial_i \left(\tilde{\psi}\bar{G}\bar{g}^{ij}\partial_j\bar{p}\right)=  \tilde{\tilde{\psi}} \partial_i \left(\tilde{G}\tilde{g}^{ij}\partial_j\tilde{p}\right).
\]
Similarly, one can verify that 
\[
    \tilde{\tilde{\psi}} \partial_{ij}^2 \left(\tilde{\psi}\bar{G}\bar{u}^i\bar{u}^j\right) = \tilde{\tilde{\psi}} \partial_{ij}^2 \left(\tilde{G}\tilde{u}^i\tilde{u}^j\right),
\]
so that, in conclusion, the equation satisfied by $\bar{u}$ and $\bar{p}$ is 
\begin{equation}
    \label{eq.pressureRd}
    -\tilde{\tilde{\psi}} \partial_i \left(\tilde{\psi}\bar{G}\bar{g}^{ij}\partial_j \bar{p}\right) = \tilde{\tilde{\psi}} \partial_{ij}^2 \left(\tilde{\psi}\bar{G}\bar{u}^i\bar{u}^j\right).
\end{equation}

\section{Boundary regularity, part 2: H\texorpdfstring{\"o}{ö}lder regularity in the full space}\label{s:main_proof_new}

From now on we will always consider $\partial \Omega\in C^{2,1}$, which guarantees that the new metric, as well as its even extension, is globally Lipschitz.
We will rewrite~\eqref{eq.pressureRd} as a pseudodifferential equation. First, let us change notation a little bit by writing $g^{ij}$ instead of $\tilde{\psi}\bar{G}\bar g^{ij}$, $ G$ instead of $\tilde \psi \bar G$, $q$ instead of $\bar{p}$, $\tilde{u}^i$ instead of $\bar{u}^i$ and $\psi$ instead of $\tilde{\tilde{\psi}}$.  
For sake of clarity, in our new notation, the equation is
\begin{equation}\label{p_boundary_full_space_new_notation}
   - \psi \partial_i\left(g^{ij}\partial_j q\right) =\psi \partial_{ij}^2 \left(G\tilde{u}^i\tilde{u}^j\right).
\end{equation}

We aim to prove our last result, namely, the $C^{2\gamma}_*$ estimate on \eqref{p_boundary_full_space_new_notation}.  More precisely, in this last section we shall prove that
\begin{equation}
    \label{est:p_boundary_Rd_new}
    \left\|  q\right\|_{C^{2\gamma}_*(\R^d)}\leq C \left( \|G\tilde u\|_{C^\gamma(\R^d)}^2+\left\|q\right\|_{L^\infty(\R^d)} \right).
\end{equation}
Note that, since the change of variables is bi-Lipschitz, the previous estimate for $q$ automatically translates into  \eqref{p_contin_est_boundary_local_noboundary} for a certain ball $B_{R_0}(x_0)$, since we have chosen the cutoff function $\psi$ such that $\psi\equiv 1$ in an open neighborhood of $x_0$.

\jump

The operator $E_{1,i}:=g^{ij}\partial_j$ is a pseudodifferential operator, $E_{1,i}=\operatorname{Op}(e_{1,i})$, where $e_{1,i}:=e_{1,i}(r,\theta,\xi)\in C^1_*S^1$ is the symbol defined by 
\[
e_{1,i}(r,\theta,\xi):=g^{ij}(r,\theta) \xi^j.
\]

Let $\delta \in (0,1)$. We define the \emph{sharp} part of $e_{1,i}$ as 
\begin{equation}
    \label{eq.sharpE2_new}
    e_{1,i}^{\sharp}(r,\theta,\xi):=\sum_{K\leq M^{\delta}} g_K^{ij}(r,\theta)\mathbf{P}_M(\xi)\xi^{j}, 
\end{equation}
where $g_K^{ij}:=\mathbf{P}_Kg^{ij}$ and the summation runs on all $M,K\in 2^{\mathbb{N}}$ such that $K \leq M^{\delta}$. We also define the \emph{flat} part of the symbol $e_{1,i}$ as
\[
    e_{1-\delta,i}^{\flat} \coloneqq e_{1,i} - e_{1,i}^{\sharp}.
\] 
Note that $e_{1,i}^{\sharp}$ is a symbol of order $1$. More precisely, we have the following

\begin{lemma} For all $i=1, \dots, d$, we have $e_{1,i}^{\sharp} \in S^1_{1,\delta}$.
\end{lemma}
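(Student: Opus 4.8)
The claim is that the sharp part $e_{1,i}^{\sharp}$, defined by freezing the low-frequency portion of the metric coefficients $g^{ij}$ against the frequency-localized pieces $\mathbf{P}_M(\xi)\xi^j$, is a classical symbol of order one with the $S^1_{1,\delta}$ estimates, i.e.\ $|\partial_\xi^\alpha \partial_{r,\theta}^\beta e_{1,i}^\sharp(r,\theta,\xi)| \lesssim \langle\xi\rangle^{1-|\alpha|+\delta|\beta|}$. The plan is to estimate each term $g_K^{ij}(r,\theta)\mathbf{P}_M(\xi)\xi^j$ in the sum directly, using that $g^{ij}$ is only Lipschitz (i.e.\ $g^{ij}\in C^{0,1}_*$, which is all that $\partial\Omega\in C^{2,1}$ gives us via \cref{prop.geometry} and \cref{l:regular_extension}), and then sum over the constrained range $K\le M^\delta$.

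First I would record the two elementary building blocks. On the frequency side, $\mathbf{P}_M(\xi)\xi^j$ is supported in $|\xi|\sim M$, is smooth, and satisfies $|\partial_\xi^\alpha(\mathbf{P}_M(\xi)\xi^j)| \lesssim M^{1-|\alpha|}\sim \langle\xi\rangle^{1-|\alpha|}$ on its support, with implicit constants independent of $M$ — this is just Bernstein/scaling applied to the fixed profiles defining $\mathbf{P}_M$. On the physical side, since $g^{ij}\in C^{0,1}_*(\R^d) = C^{0,1}_*$ (globally Lipschitz after the even extension, as emphasized at the start of \cref{s:main_proof_new}), the Littlewood--Paley pieces obey $\|g_K^{ij}\|_{L^\infty}\lesssim \|g^{ij}\|_{L^\infty}$ and, more usefully, the Bernstein-type bound $\|\partial_{r,\theta}^\beta g_K^{ij}\|_{L^\infty} \lesssim K^{|\beta|}\|g_K^{ij}\|_{L^\infty}$, together with the decay $\|g_K^{ij}\|_{L^\infty}\lesssim K^{-1}\|g^{ij}\|_{C^{0,1}_*}$ coming from the Lipschitz (Zygmund-$1$) regularity. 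It is precisely this last gain of $K^{-1}$ that will make the sum converge favorably.

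Next I would differentiate a single term and collect powers. Because $g_K^{ij}$ depends only on $(r,\theta)$ and $\mathbf{P}_M(\xi)\xi^j$ only on $\xi$, one gets
\[
\bigl|\partial_\xi^\alpha\partial_{r,\theta}^\beta\bigl(g_K^{ij}(r,\theta)\mathbf{P}_M(\xi)\xi^j\bigr)\bigr|
\lesssim \bigl(K^{|\beta|}\|g_K^{ij}\|_{L^\infty}\bigr)\cdot M^{1-|\alpha|}\,\mathbf 1_{|\xi|\sim M}.
\]
Now I would sum over the admissible pairs. Fix $\xi$ with $|\xi|\sim M_0$ for some dyadic $M_0$; only $M\sim M_0$ contributes, so the sum collapses to $\sum_{K\le M_0^\delta}$ of the above. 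Using $\|g_K^{ij}\|_{L^\infty}\lesssim \min\{1, K^{-1}\}\|g^{ij}\|_{C^{0,1}_*}$ and $K^{|\beta|}\le (M_0^\delta)^{|\beta|} = M_0^{\delta|\beta|}$ for every $K$ in range, and noting $\sum_{K\le M_0^\delta} K^{|\beta|-1}\lesssim (M_0^\delta)^{|\beta|}$ when $|\beta|\ge 1$ (geometric sum dominated by its top term) while $\sum_{K\le M_0^\delta}K^{-1}\lesssim 1$ when $|\beta|=0$, one arrives in all cases at a bound
\[
\bigl|\partial_\xi^\alpha\partial_{r,\theta}^\beta e_{1,i}^\sharp(r,\theta,\xi)\bigr|
\lesssim M_0^{\,1-|\alpha|}\, M_0^{\,\delta|\beta|}\,\|g^{ij}\|_{C^{0,1}_*}
\sim \langle\xi\rangle^{\,1-|\alpha|+\delta|\beta|}\,\|g^{ij}\|_{C^{0,1}_*},
\]
which is exactly the $S^1_{1,\delta}$ estimate. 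Smoothness of $e_{1,i}^\sharp$ in $(x,\xi)$ is clear since each summand is smooth and the sum is locally finite in $\xi$ (for fixed $\xi$ only finitely many $M$, hence finitely many $K$, are active).

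I do not expect a genuine obstacle here; this is a bookkeeping lemma. The only point requiring a little care is the interplay of the two constraints "$K\le M^\delta$" and the Lipschitz decay $\|g_K\|_{L^\infty}\lesssim K^{-1}$: one must check that truncating the metric at frequencies $K$ no larger than $M^\delta$ (rather than some fixed $K$) is what converts each physical-space derivative into a harmless factor $M^\delta$ — i.e.\ that the cutoff $K\le M^\delta$ is exactly calibrated to produce the $\delta|\beta|$ loss and nothing worse. A secondary subtlety is the endpoint $K=1$, where the Bernstein bound $\|\partial^\beta g_1\|\lesssim \|g_1\|_{L^\infty}$ must be used in place of the $K^{|\beta|}$ version; this only improves the estimate and is absorbed into the same geometric sum.
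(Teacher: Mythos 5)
Your proof is correct and follows essentially the same route as the paper's: estimate each summand $g_K^{ij}(r,\theta)\,\mathbf{P}_M(\xi)\xi^j$ by a Bernstein bound on $\partial^\beta g_K^{ij}$ and a scaling bound on $\partial_\xi^\alpha(\mathbf{P}_M(\xi)\xi^j)$, then sum using the frequency localization in $\xi$ and the constraint $K\le M^{\delta}$. Your extra use of the Lipschitz decay $\|g_K^{ij}\|_{L^\infty}\lesssim K^{-1}$ is a harmless refinement which in fact treats the case $|\beta|=0$ more cleanly than the paper's summation $\sum_{K\le M^{\delta}}K^{|\beta|}\lesssim M^{\delta|\beta|}$, which as written loses a logarithm when $\beta=0$.
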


\begin{proof} 
To prove that $e_{1,i}^{\sharp} \in S^1_{1,\delta}$, we need to show the convergence of the series in \eqref{eq.sharpE2_new} in all $C^k_x$ spaces (for any non-negative $k$), with the right bounds. Note that, for any $K$, $g_K^{ij}$ is a smooth function, and recall that $\mathbf{P}_M(\xi)$ is the Littlewood--Paley partition introduced in \cref{s:function_spaces}.  Let $\alpha,\beta$ be two multi-indices, with $|\beta|=k$, and set $x=(r,\theta)$. 
By the triangle inequality, we have
    \[
        |\partial _{\xi}^{\alpha}\partial_{x}^{\beta}e^{\sharp}_{1,i}(x,\xi)| \leq \sum_{K\leq M^{\delta}} \left|\partial_x^{\beta}g_K^{ij}(x)\right|\left|\partial^{\alpha}_{\xi}(\mathbf{P}_M(\xi)\xi^{j})\right|. 
    \]
    Now observe that, by \cref{thm.bernstein}, there holds  $|\partial_x^{\beta}g_K^{ij}(x)| \lesssim K^{|\beta|}\|g^{ij}\|_{L^{\infty}} \lesssim K^{|\beta|}$. 
    By direct computations, as soon as $M>1$, one also has the bound 
    \[
        |\partial^{\alpha}_{\xi}(\mathbf{P}_M(\xi)\xi^{j})| \lesssim \langle \xi \rangle ^{1-|\alpha|} \chi(M^{-1}\xi),
    \] 
    where $\chi$ is some compactly supported function in the annulus $\left\{1/4\leq |\xi|\leq 4 \right\}$. Therefore, 
    \[
        |\partial _{\xi}^{\alpha}\partial_{x}^{\beta}e^{\sharp}_{1,i}(x,\xi)| \lesssim \sum_{K \leq M^{\delta}} K^{|\beta|} \langle \xi \rangle ^{1-|\alpha|}\chi(M^{-1}\xi). 
    \]
    After a summation in $K$ and $M$, we hence get
    \[        
        |\partial _{\xi}^{\alpha}\partial_{x}^{\beta}e^{\sharp}_{1,i}(x,\xi)| \lesssim \sum_{M} M^{\delta |\beta|} \langle \xi \rangle ^{1-|\alpha|}\chi(M^{-1}\xi) \lesssim \langle \xi \rangle ^{1-|\alpha| + \delta |\beta|}
    \]
    and the conclusion follows.
%\noindent (ii) We  need to check that
   % \[
    %    e^{\flat}_{1-\delta,i} (x,\xi) \coloneqq \sum_{K>M^{\delta}} g_K^{ij}(x)\xi^{j}\mathbf{P}_M(\xi) \in C^1_*S^{1-\delta}_{1,\delta}. 
    %\]
    %Using the same method as in (i) we obtain 
%\begin{align*}
 %       \left\| \partial_{\xi}^{\alpha}\left(\sum_{K>M^{\delta}} g_K^{ij}(x)\xi^{j}\mathbf{P}_M(\xi)\right)\right\|_{C^1_*} & \lesssim \sum_{M \geq 1} \left\| \sum_{K> M^{\delta}} g_K \right\|_{C^{1}_*} \langle \xi \rangle ^{1-|\alpha|}\chi(M^{-1}\xi) \\
  %      & \lesssim \sum_{M} \langle \xi \rangle ^{1-|\alpha|}\chi(M^{-1}\xi) \\
   %     & \lesssim \langle \xi\rangle ^{1-|\alpha|}\lesssim \langle \xi\rangle ^{1-|\alpha|+\delta},
    %\end{align*}
%where we used the inequality 
%\[
 %   \left\|\sum_{K:K>M^{\delta}} g_K \right\|_{C^1_*} \lesssim \|g\|_{C_*^1},
%\] 
%which is directly obtained by using that 
%\[
 %   \mathbf{P}_L\left(\sum_{K:K>M^{\delta}} g_K\right)=g_L \text{ for all } L>M^{\delta}
%\]
%and the definition of $\|\cdot \|_{C_*^1}$. 
\end{proof}

\begin{remark}\label{rem.flatPart}
With similar computations, one gets $e_{1-\delta,i}^{\flat} \in C^1_*S^{1-\delta}_{1,\delta}$, therefore justifying its indices. However, since it is of no use here, we leave the proof of this property to the reader.
\end{remark}

Thus, equation \eqref{p_boundary_full_space_new_notation} translates into 
\[
    -\psi \partial_i \left( E_{1,i}^\sharp (q)\right)=\psi \partial_{ij} \left(G \tilde u^i \tilde u^j \right)+\psi  \partial_i \left( E_{1-\delta,i}^\flat (q)\right).
\]
Our next observation is  that the principal part of the operator $\psi \partial_i \left( E_{1,i}^\sharp \, \cdot\,\right)$ is elliptic. In order to see it, we apply \cref{thm.calculus} with $N=0$, and get $\psi \partial_i \left( E_{1,i}^\sharp \, \cdot\,\right) = \operatorname{Op}(c_i)$ where
\[
    c_i(r,\theta,\xi)=\psi e^\sharp_{1,i}(r,\theta,\xi)\xi^i + h(r,\theta,\xi),
\]
and where $h\in S^{1+\delta}_{1,\delta}$. Since $1+\delta<2$, the operator $R_{1+\delta}:=\operatorname{Op}(h)$ can be considered as a remainder term. Thus, we can further rewrite the equation as 
\begin{equation}\label{eq_second_rewriting}
- E_{2}^\sharp q=\psi \partial_{ij} \left(G \tilde u^i \tilde u^j \right)+\psi  \partial_i \left( E_{1-\delta,i}^\flat q\right)+R_{1+\delta} (q),
\end{equation}
where $E_{2}^\sharp :=\operatorname{Op}(e^\sharp_2)$ is the second-order operator associated to the symbol
\[
    e^\sharp_2:=\psi(r,\theta) \sum_{K\leq M^{\delta}} g_K^{ij}(r,\theta)\mathbf{P}_M(\xi)\xi^{i}\xi^j.
%=\psi(r,\theta)\left(\xi_{r}^2  + \sum_{K\leq M^{\delta}} g_K^{\theta^i\theta^j}(r,\theta)\mathbf{P}_M(\xi)\xi^{\theta^i}\xi^{\theta^j}\right),
\]
%where in the last equality we have used that $g^{ij}$ is block diagonal by Proposition \ref{prop.geometry}.\\

%We have the following 

\begin{lemma}
The symbol $e^\sharp_2\in S^2_{1,\delta}$ is elliptic.
\end{lemma}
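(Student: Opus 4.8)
That $e^\sharp_2\in S^2_{1,\delta}$ is immediate: since $e^\sharp_2=\psi\sum_i e^\sharp_{1,i}\,\xi^i$ is the product of the symbol $\psi\in S^0$, of the symbols $e^\sharp_{1,i}\in S^1_{1,\delta}$ from the previous lemma, and of the symbols $\xi^i\in S^1$, the claim follows at once from the symbol algebra $S^{m}_{1,\delta}\cdot S^{n}_{1,\delta}\subset S^{m+n}_{1,\delta}$. The plan is therefore to establish ellipticity. I would first fix a compact neighbourhood $\mathcal K$ of $x_0$ contained in the open set $\{\psi\equiv1\}$ and small enough to lie inside the geodesic chart; unwinding the nested cutoffs introduced in \cref{s:boundary}, on $\mathcal K$ the renamed coefficient $g^{ij}$ coincides with $\tilde G\,\tilde g^{ij}$, so by \cref{prop.geometry}(2) together with $\tilde G=\sqrt{\det\tilde g}\geq c_0>0$ on compact sets one has the uniform lower bound $g^{ij}(x)\,\xi^i\xi^j\geq c\,|\xi|^2$ for all $x\in\mathcal K$ and $\xi\in\R^d$. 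Performing the partial summation in $K$ and writing $g^{ij}_{\leq L}:=\sum_{K\leq L}g^{ij}_K$ for the low-frequency truncation of $g^{ij}$, on $\mathcal K$ the symbol reads
\[
e^\sharp_2(x,\xi)=\sum_{M}\mathbf{P}_M(\xi)\,g^{ij}_{\leq M^{\delta}}(x)\,\xi^i\xi^j .
\]

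The heart of the matter is that $g^{ij}_{\leq M^{\delta}}$ approximates $g^{ij}$ well. Since $\partial\Omega\in C^{2,1}$ makes the extended metric globally Lipschitz, $g^{ij}\in C^1_*(\R^d)$, so $\|g^{ij}_N\|_{L^\infty}\lesssim N^{-1}$ for every dyadic $N$ by the definition \eqref{eq.normHolderZ} of the H\"older--Zygmund norm; summing the dyadic tail ($\sum_{N>L}N^{-1}\lesssim L^{-1}$) gives $\|g^{ij}-g^{ij}_{\leq L}\|_{L^\infty}\lesssim L^{-1}$, hence $\|g^{ij}-g^{ij}_{\leq M^{\delta}}\|_{L^\infty}\lesssim M^{-\delta}$. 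On $\operatorname{supp}\mathbf{P}_M$ one has $|\xi|\sim M$, so for $x\in\mathcal K$
\[
g^{ij}_{\leq M^{\delta}}(x)\,\xi^i\xi^j\geq g^{ij}(x)\,\xi^i\xi^j-CM^{-\delta}|\xi|^2\geq\bigl(c-C'M^{-\delta}\bigr)|\xi|^2\geq\tfrac{c}{2}\,|\xi|^2
\]
as soon as $M\geq M_0$ for a suitable threshold $M_0=M_0(c,\delta,\Omega)$. Finally, using $\mathbf{P}_M\geq0$, $\sum_M\mathbf{P}_M\equiv1$, and the fact that for $|\xi|\geq R:=2M_0$ only dyadic indices $M\sim|\xi|\geq M_0$ contribute to the sum, I would conclude
\[
e^\sharp_2(x,\xi)\;\geq\;\tfrac{c}{2}\,|\xi|^2\;\geq\;c''\langle\xi\rangle^2\qquad\text{for all }x\in\mathcal K,\ |\xi|\geq R .
\]
Since $g^{ij}=g^{ji}$ is real, $e^\sharp_2$ is real-valued, so $|e^\sharp_2|=e^\sharp_2$ and the ellipticity condition on the compact set $\mathcal K$ holds with constants depending only on $\Omega,\gamma,\delta$ and the chosen cutoffs.

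The single delicate point is the \emph{uniform} control of the truncation error $\|g^{ij}-g^{ij}_{\leq M^{\delta}}\|_{L^\infty}$, and this is exactly where the Lipschitz regularity of the metric --- hence the hypothesis $\partial\Omega\in C^{2,1}$ --- enters; I expect no further obstacle, since already the uniform continuity of $g^{ij}$ would force this error to vanish as $M\to\infty$ and thus suffice for ellipticity, the Lipschitz bound merely making the estimate clean and quantitative. One should only be careful to choose $\mathcal K$ small enough that $g^{ij}$ genuinely equals the honest uniformly elliptic metric there, and to keep track that the ellipticity constant and the threshold $R$ depend only on the fixed data $\Omega,\gamma,\delta$ and the cutoffs, as required for the subsequent elliptic inversion via \cref{thm.elliptic}.
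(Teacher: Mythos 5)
Your proposal is correct and follows essentially the same route as the paper: the symbol class follows from the product rule for $S^m_{1,\delta}$ classes, and ellipticity is obtained by comparing $e^\sharp_2$ with the untruncated quadratic form (elliptic by \cref{prop.geometry}) and controlling the high-frequency tail of the metric via $\|g_K\|_{L^\infty}\lesssim K^{-1}\|g\|_{C^1_*}$, so that the error on $\operatorname{supp}\mathbf{P}_M$ is $O(M^{-\delta})|\xi|^2$ and is absorbed for $|\xi|$ large. The only cosmetic difference is that you bound $\|g^{ij}-g^{ij}_{\leq M^{\delta}}\|_{L^\infty}$ blockwise and reassemble with the partition of unity, while the paper bounds the complementary double sum $\sum_{K>M^{\delta}}g^{ij}_K\mathbf{P}_M\xi^i\xi^j$ directly; this is the same estimate, and your extra care with the cutoffs and the factor $\tilde G\geq c_0>0$ is consistent with (indeed slightly more explicit than) the paper's argument.
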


\begin{proof}
Since $e^\sharp_{1,i}\in S^1_{1,\delta}$ and $\xi^i\in S^1_{1,\delta}$, we immediately see that  $e^\sharp_2\in S^2_{1,\delta}$.
To prove the ellipticity of $e_2^{\sharp}$, let us first observe that the symbol 
\[
e_2(r,\theta,\xi):=\psi(r,\theta)g^{ij}(r,\theta)\xi^i\xi^j
\]
is elliptic.
Indeed, by \cref{prop.geometry}, we have $g^{ij}(x)\xi^{i}\xi^{j} \geq c |\xi|^2$, for all $|\xi| \geq R$ and $x \in U$, for some constant $c>0$. 
Since $\operatorname{supp} \psi \subset U$, the latter inequality holds for all $x$ in $U_0 \coloneqq \{\psi \equiv 1\}$, hence implying that $e_2$ is elliptic on $U_0$ with constant $c$.

In order to prove that $e_2^{\sharp}$ is elliptic, we only need to prove that, for $M_0$ large enough and $|\xi| \geq M_0$ there holds  
\begin{equation}
    \label{eq.largeM_new}
    \sum_{K\leq M^{\delta}} g_K^{ij}(x)\mathbf{P}_M(\xi)\xi^{i}\xi^{j} \geq \frac{c}{2}|\xi|^2,
\end{equation}
where the summation runs on both $K$ and $M$.

To obtain~\eqref{eq.largeM_new}, let us bound 
\begin{align*}
    \left\vert \sum_{K > M^{\delta}} g_K^{ij}(x)\mathbf{P}_M(\xi)\xi^{i}\xi^{j} \right\vert & \leq \sum_{K > M^{\delta}} \left|g_K^{ij}(x)\right|\Big|\mathbf{P}_M(\xi) \Big|\left|\xi^{i}\right|\left|\xi^{j}\right|  \leq d|\xi|^2 \sum_{K:K> M_0^{\delta}}\left\|g_K\right\|_{L^{\infty}},
\end{align*}
where we have also used the partition of unity property $\sum_{M} \mathbf{P}_M(\xi)=1$ to get rid of the sum on $M$. Now observe that, using \eqref{eq.normHolderZ}, there holds 
\[
   \sum_{K:K>M_0^{\delta}} \|g_K\|_{L^{\infty}} \leq C\sum_{K:K>M_0^{\delta}} K^{-1}\|g\|_{C^{1}_*} \leq 2C M_0^{-\delta }\|g\|_{C^{1}_*},
\]
which can be made smaller than $\frac{c}{2d}$ for $M_0$ large enough.  
Therefore, we get~\eqref{eq.largeM_new}. 
\end{proof}

Since $E_2^{\sharp}= \operatorname{Op}(e_2^{\sharp})$ is elliptic on $U_0=\{\psi \equiv 1\}$,  we can invert it by using \cref{thm.elliptic}.
Hence there exist $E_{-2}^{\sharp} \in \operatorname{Op}(S^{-2}_{1,\delta})$ and $R_{-2(1-\delta)} \in \operatorname{Op}(S^{-2(1-\delta)}_{1,\delta})$ such that 
\begin{equation}
    \label{eq.inversionEll_new}
    E^{\sharp}_{-2}\circ E^{\sharp}_{2} = \operatorname{Op}(\chi) + R_{-2(1-\delta)},
\end{equation}
where $\chi$ is some compactly supported function in $U_0$ such that $\chi \equiv 1$ in the open set $U_1$ containing the point $x_0$. 
First, since $2\gamma \leq 1$ and $\delta < \frac{1}{2}$, we have $2\gamma-2(1-\delta) < 0$ and therefore, thanks to \cref{th.calderonVaillancourt}, we have the continuity property for $R_{-2(1-\delta)}$, which reads as
\begin{equation}
    \label{eq.continuityRinfty_new}
    \|R_{-2(1-\delta)}(q)\|_{C^{2\gamma}_*} \lesssim \|q\|_{C^0_*} \lesssim\|q\|_{L^{\infty}}. 
\end{equation}

We are going to perform a last change of variable, by introducing $v^i \coloneqq G\tilde{u}^i$ and $a \coloneqq \frac{1}{G} \in C^1_*$, so that $\psi \partial_{ij}^2(G\tilde{u}^i\tilde{u}^j) = \psi \partial_{ij}^2(av^iv^j)$. Note that the divergence-free condition now reads as 
\begin{equation}
    \label{v_div-free_new}
    \partial_iv^i=0.
\end{equation}
Observe that~\eqref{est:p_boundary_Rd_new} can be rephrased as
\begin{equation}
    \label{est:finalBoudary_new}
    \| q\|_{C_*^{2\gamma}} \leq C\left(\|v\|_{C^{\gamma}_*}^2 + \|q\|_{L^{\infty}}\right). 
\end{equation}
In order to obtain~\eqref{est:finalBoudary_new}, let us apply $-E_{-2}^{\sharp}$ to~\eqref{eq_second_rewriting}.
Hence we can write 

\begin{equation*}
    \chi q = -\underbrace{E_{-2}^{\sharp}\left(\psi \partial^2_{ij}(av^iv^j)\right)}_{A} - \underbrace{E_{-2}^{\sharp} \left(\psi  \partial_i \left( E_{1-\delta,i}^\flat q\right)\right)}_{B} - \underbrace{ E_{-2}^{\sharp}\circ R_{1+\delta}(q)+R_{-2(1-\delta)}(q)}_{R},
\end{equation*}
so we just need to estimate the right-hand side. 
The term $A$ is the main contribution, whereas the terms $B$ and $R$ should be thought as remainder terms. We postpone the treatment of $A$ and $B$ to \cref{sec:termA_new} and \cref{sec:termB_new}.

\jump

Let us consider the term $R$. Using the continuity property $C^{2\gamma -2}_* \to C^{2\gamma}_*$ of $E_{-2}^{\sharp} \in \operatorname{Op}(S^{-2}_{1,\delta})$ given by \cref{th.calderonVaillancourt}, together with the continuity property \eqref{eq.continuityRinfty_new}, we can estimate
\begin{align*}
    \|R\|_{C^{2\gamma}_*} &\lesssim \|R_{1+\delta}(q)\|_{C^{2\gamma -2}_*}+ \|R_{-2(1-\delta)}(q)\|_{C^{2\gamma}_*} \\
    & \lesssim \|q\|_{C^{2\gamma - 1 +\delta}_*} + \|q\|_{L^\infty} \lesssim  \|q\|_{C_*^{2\gamma-\frac{\delta}{2}} } ,
\end{align*}
where in the third inequality we have used that $2\gamma  \leq 1$ and that $4\gamma>3\delta$ (this last property can be indeed assumed without any loss of generality, by choosing $\delta>0$ sufficiently small). Now, by interpolation between $C^0_*$ and $C^{2\gamma}_*$ and Young's inequality, we can estimate 
\begin{align*}
 C\|q\|_{C_*^{2\gamma -\frac{\delta}{2}}} &  \leq C\|q\|_{C_*^{2\gamma}}^{1-\frac{\delta}{4\gamma}}\|q\|_{L^{\infty}}^{\frac{\delta}{4\gamma}}  \leq \varepsilon \|q\|_{C^{2\gamma}_*} + C(\varepsilon) \|q\|_{L^{\infty}},
\end{align*}
for some (possibly large) constant $C(\eps)>0$.
In the following subsections we will prove that
\begin{equation}
    \label{est.A_new}
    \|A\|_{C^{2\gamma}_*} \lesssim \|v\|_{C^{\gamma}}^2
\end{equation}
and 
\begin{equation}
    \label{est.B_new}
    \|B\|_{C^{2\gamma}_*} \lesssim \|q\|_{C_*^{2\gamma - \frac{\delta}{2}}} \leq \varepsilon\|q\|_{C^{2\gamma}_*} + C(\varepsilon)\|q\|_{L^{\infty}}. 
\end{equation}

The combination of these inequalities indeed imply a bound of the form  
\[
    \|\chi q\|_{C^{2\gamma}_*} \leq 2\varepsilon \|q\|_{C^{2\gamma}_*} + C(\varepsilon)(\|q\|_{L^{\infty}} + \|v\|^2_{C^{\gamma}_*})
\]
(for all $\varepsilon > 0$ arbitrarily small),
which itself implies~\eqref{est:finalBoudary_new}.
Indeed, the estimate holds in $\{\chi \equiv 1\}$, and can be obtained (with the same proof) locally around any point in $U_0$.
Thus the term $\|q\|_{C^{2\gamma}_*}$ in the right-hand side can be absorbed in the left-hand side by using standard techniques in elliptic PDEs, see~\cite{FR23}*{Proof of Theorem 2.16} or~\cite{GT}*{Theorem 6.2} for instance.

\jump

The goal of the following subsections is to prove the two last estimates \eqref{est.A_new} and \eqref{est.B_new}.

\subsection{Term A}\label{sec:termA_new}
 Let us write 
\[
    a= \sum_{L \in 2^{\mathbb{N}}} a_L \quad \text{ and } \quad v^i = \sum_{K \in 2^{\mathbb{N}}} v_K^i,
\]
so that 
\[
    A = E_{-2}^{\sharp}\left(\sum_{L, K, M \geq 1} \psi \partial_{ij}^2\left(a_Lv_K^iv_M^j\right)\right). 
\]

At this stage, our goal is to apply the same strategy of \cref{s:interior}. To do so, we use the double-divergence form when the frequencies of $v$ are similar, $K\sim M$.
This allows us to split the derivatives and gain the double regularity. Also, when $L \leq \max\{K,M\}$, the derivatives $\partial_{ij}^2$ apply to $a$, which is smoother than $v$, therefore the double-divergence form also gives us the double regularity. 
When $L$ is not the largest frequency and one of the frequencies among $K, M$ is significantly larger than the other one, the double-divergence form could not achieve double regularity. Therefore, in this case, we rely on the divergence-free structure of $v$, which allows to rewrite the double-divergence as some product of terms of order $1$. 
Indeed, we have
\begin{lemma}\label{lem.rewriting_new} There holds
\[
    \partial_{ij}^2(a_Lv^i_Kv^j_M)=\partial_j(\partial_i a_L v^i_Kv^j_M) + \partial_j a_L v^i_K\partial_i v^j_M + a_L \partial_jv^i_K\partial_i v^j_M.
\]
\end{lemma}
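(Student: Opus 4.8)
The plan is to obtain the identity by a direct application of the Leibniz rule, invoking the divergence-free condition \eqref{v_div-free_new} twice. Throughout, Einstein's summation convention over the repeated indices $i,j$ is in force, and I recall that the Littlewood--Paley projector $\mathbf{P}_N$, being a Fourier multiplier, commutes with every partial derivative; in particular $\partial_i v_K^i = 0$ and $\partial_j v_M^j = 0$ for every dyadic $K,M$.

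First I would expand the inner divergence. Differentiating the product $a_L v_K^i v_M^j$ in the $i$-th variable and using $\partial_i v_K^i = 0$ gives
\[
\partial_i\!\left(a_L v_K^i v_M^j\right) = \partial_i a_L\, v_K^i v_M^j + a_L\, v_K^i\, \partial_i v_M^j ,
\]
since the term $a_L (\partial_i v_K^i) v_M^j$ vanishes. Then I would apply $\partial_j$ to this identity and expand once more by Leibniz, obtaining
\[
\partial_{ij}^2\!\left(a_L v_K^i v_M^j\right)
= \partial_j\!\left(\partial_i a_L\, v_K^i v_M^j\right)
+ \partial_j a_L\, v_K^i\, \partial_i v_M^j
+ a_L\, \partial_j v_K^i\, \partial_i v_M^j
+ a_L\, v_K^i\, \partial_i\!\left(\partial_j v_M^j\right).
\]
The last summand is again zero, this time because $\partial_j v_M^j = 0$, and the three remaining terms are precisely the right-hand side of the claimed identity.

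Since both uses of the divergence-free condition are on frequency-localised pieces, and such localisation commutes with differentiation, there is no genuine obstacle here; the only point requiring care is bookkeeping, namely keeping track of which derivative is distributed at each step so that no spurious term is dropped or double-counted. The statement then follows at once.
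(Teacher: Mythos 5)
Your proof is correct and follows essentially the same route as the paper: expand the inner divergence using $\partial_i v_K^i=0$ (valid since $\mathbf{P}_K$ commutes with derivatives), then apply $\partial_j$ by Leibniz and discard the last term via $\partial_j v_M^j=0$. No issues.
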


\begin{proof} Note that $\mathbf{P}_K$ commutes with $\partial_i$, as they are both Fourier multipliers.
Therefore, since  $\partial_i v^i=0$ by \eqref{v_div-free_new}, we obtain $\partial_iv^i_K=0$. Using this, we can write 
\begin{align*}
     \partial_{ij}^2(a_Lv_K^iv_M^j) &=\partial_j(\partial_i a_L v_K^iv_M^j) + \partial_j(a_Lv^i_K\partial_iv^j_M) \\
     & = \partial_j(\partial_i a_L v_K^iv_M^j) + \partial_ja_Lv^i_K\partial_iv^j_M + a_L\partial_jv^i_K\partial_i v^j_M,
\end{align*}
where in the last line we used again $\partial_jv^j_M=0$.
\end{proof}

Thanks to the above discussion, we write $A=A_1 + A_2$, with
\begin{equation*}
    \label{eq.A1_new}
    A_1\coloneqq E^{\sharp}_{-2}\left(\sum_{\substack{K\sim M\\L \ll \text{max}\{K,M\}}}+ \sum_{L\geq \frac{1}{8}\text{max}\{K,M\}}\right)\psi \partial^2_{ij}(a_Lv_K^iv_M^j) =: A_{11}+A_{12}
\end{equation*}
and 
\begin{align*}
    \label{eq.A2_new}
    A_2& \coloneqq E^{\sharp}_{-2}\sum_{\substack{K\ll M \text{ or } M\ll K\\L \ll \text{max}\{K,M\}}}\psi \partial^2_{ij}(a_Lv_K^iv_M^j) \notag \\ 
    & =  E^{\sharp}_{-2}\psi \sum_{\substack{K\ll M \text{ or } M\ll K\\L \ll \text{max}\{K,M\}}} \left(\partial_j(\partial_i a_L v^i_Kv^j_M) +\partial_j a_L v^i_K\partial_i v^j_M + a_L \partial_jv^i_K\partial_i v^j_M \right) \\
    &=: A_{21} + A_{22} + A_{23} , \notag 
\end{align*}
where we also  used \cref{lem.rewriting_new}. We are left with estimating $A_{11}$, $A_{12}$, $A_{21}$, $A_{22}$ and $A_{23}$.

\jump

As a preliminary remark, let us observe that, by composition of pseudodifferential operators with smooth symbols (see \cref{thm.calculus}), we recognize that $E_{-2}^{\sharp}\psi\partial_{ij}^2 \in \operatorname{Op}(S^0_{1,\delta})$, therefore continuous $C^{2\gamma}_* \to C^{2\gamma}_*$, thanks to \cref{th.calderonVaillancourt}. 

\medskip

\noindent \textbf{Estimating $A_{11}$.} 
By continuity of $E_{-2}^{\sharp}\psi\partial_{ij}^2$, we start by estimating
\[
    \|A_{11}\|_{C_*^{2\gamma}} \lesssim  \left\|\sum_{\substack{K\sim M\\L \ll \text{max}\{K,M\}}}a_Lv^i_Kv^j_M\right\|_{C^{2\gamma}_*}.
\]
As the sum is symmetrical in $i$ and $j$, we can assume $M\geq K$, therefore $K  \in \{\frac{M}{2},M\}$. As $v_M^j$ and $v_K^i$ are frequency supported at similar values $M \sim K$,  $v^i_Kv^j_M$ is frequency supported in $\{|\xi| \lesssim M\}$.
Since $L<M$, then $a_Lv^i_Kv^j_M$ is also frequency supported in $\{|\xi| \lesssim M\}$. Therefore, 
\[
    \mathbf{P}_N\left(\sum_{\substack{K\sim M\\L \ll \text{max}\{K,M\}}}a_Lv^i_Kv^j_M\right) = \sum_{\substack{K\sim M \geq N\\L \ll M}} \mathbf{P}_N(a_Lv^i_Kv^j_M),
\]
so that 
\[
    \|A_{11}\|_{C_*^{2\gamma}} \lesssim \sup_{N \geq 1} N^{2\gamma} \sum_{\substack{M \geq N \\ L\ll M}}\sum_{K=\frac{M}{2},M}  \left\|a_Lv^i_Kv^j_M\right\|_{L^{\infty}}. 
\]
Using $a \in C^1_*$ and $v \in C^{\gamma}_*$, we see that
\[
     \left\|a_Lv^i_Kv^j_M\right\|_{L^{\infty}} \lesssim L^{-1}(KM)^{-\gamma}\|v\|^2_{C^{\gamma}_*}\|a\|_{C^1_*} \lesssim L^{-1}M^{-2\gamma}\|v\|_{C^{\gamma}_*}^2,
\] 
where we used that $K \sim M$. Therefore, we finally arrive at
\begin{align*}
    \|A_{11}\|_{C_*^{2\gamma}} &\lesssim \sup_{N \geq 1} \sum_{\substack{M \geq N \\ L\ll M}} N^{2\gamma}L^{-1}M^{-2\gamma}\|v\|^{2}_{C_*^{\gamma}}  \lesssim \sup_{N \geq 1} \sum_{\substack{M \geq N \\ L \geq 1}} N^{2\gamma}L^{-1}M^{-2\gamma}\|v\|^{2}_{C_*^{\gamma}} \\
    & \lesssim \sup_{N \geq 1} N^{2\gamma} \sum_{M \geq N}M^{-2\gamma}\|v\|^{2}_{C_*^{\gamma}} \lesssim \|v\|^{2}_{C_*^{\gamma}},
\end{align*}
proving that $A_{11}$ satisfies~\eqref{est.A_new}. 
\medskip 

\noindent \textbf{Estimating $A_{12}$.}  By continuity of $E_{-2}^{\sharp}\psi\partial_{ij}^2$, we start by estimating 
\[
    \|A_{12}\|_{C_*^{2\gamma}} \lesssim  \left\|\sum_{L\geq \frac{1}{8}\text{max}\{K,M\}} a_Lv^i_Kv^j_M\right\|_{C^{2\gamma}_*}.
\]
By symmetry of $K$ and $M$, we can assume $M \geq K$. 
The frequency localization gives
\begin{align*}
    \mathbf{P}_N\left(\sum_{L\geq \frac{1}{8}\text{max}\{K,M\}}a_Lv^i_Kv^j_M\right) &= \mathbf{P}_N\left(\sum_{L\gg M \geq K}a_Lv^i_Kv^j_M\right) + \mathbf{P}_N\left(\sum_{L\sim M \geq K}a_Lv^i_Kv^j_M\right) \\
    & = \sum_{\substack{L \sim N \\ K\leq M \ll L}}\mathbf{P}_N(a_Lv^i_Kv^j_M) + \sum_{\substack{N \leq L \sim M \\ K\leq M}}\mathbf{P}_N(a_Lv^i_Kv^j_M), 
\end{align*}
so that, being $\|\partial_j a_{L}\|_{L^\infty} \lesssim \|\partial_j a\|_{L^{\infty}} \lesssim 1$ since $a\in \Lip$  (see \cref{thm.bernstein}),  we deduce
\begin{align*}
    \|A_{12}\|_{C_*^{2\gamma}} &\lesssim \sup_{N \geq 1} N^{2\gamma} \left(\sum_{\substack{L \sim N \\ K\leq M \ll L}} L^{-1}M^{-\gamma}K^{-\gamma} + \sum_{\substack{N \leq L \sim M \\ K\leq M}} L^{-1}{M^{-\gamma}}K^{-\gamma} \right)\|v\|^2_{C^{\gamma}_*}\\
    & \lesssim \sup_{N \geq 1} N^{2\gamma} \left( N^{-1}\sum_{M\geq 1}M^{-\gamma}\sum_{K \geq 1}K^{-\gamma}  + \sum_{L\geq N} L^{-(1+\gamma)} \sum_{K\geq 1}K^{-\gamma}\right)\|v\|^2_{C^{\gamma}_*}\\
    & \lesssim \sup_{N \geq 1} \left(N^{2\gamma -1} + N^{\gamma -1}\right)\|v\|^2_{C^{\gamma}_*} \lesssim \|v\|^2_{C^{\gamma}_*},
\end{align*}
since $2\gamma -1 \leq 0$. This proves that $A_{12}$ satisfies~\eqref{est.A_new}. 

\medskip

\noindent \textbf{Estimating $A_{21}$.} To estimate $A_{21}$, we use the  $C^{2\gamma -1}_* \to C_*^{2\gamma}$ continuity of $E_{-2}^{\sharp}\psi\partial_{j} \in \operatorname{Op}(S^{-1}_{1,\delta})$.
Since $K$ and $M$ play a symmetrical role, let us assume that $K \ll M$ (which means $K<\frac{M}{2}$), so
\[
    \|A_{21}\|_{C^{2\gamma}_{*}} \lesssim \left\| \sum_{\substack{K \ll M \\ L \ll M}} \partial_ia_L v^i_Kv^j_M \right\|_{C^{2\gamma -1}_*}. 
\]
Now observe that 
\[
    \mathbf{P}_N\left(\sum_{\substack{K \ll M \\ L \ll M}} \partial_ia_L v^i_Kv^j_M \right) = \sum_{\substack{M\sim N \\ K, L \ll M}} \mathbf{P}_N(\partial_ia_Lv^i_Kv^j_M),
\]
and therefore 
\begin{align*}
    \|A_{21}\|_{C_*^{2\gamma}} &\lesssim \sup_{N\geq 1}N^{2\gamma -1} \sum_{\substack{M\sim N \\ K, L \ll M}}  \left\|\partial_ia_Lv^i_Kv^j_M\right\|_{L^{\infty}}.
\end{align*}
Since $a\in \Lip$, we have $\|\partial_j a_{L}\|_{L^\infty} \lesssim \|\partial_j a\|_{L^{\infty}} \lesssim 1$, see \cref{thm.bernstein}.
Since $v\in C_*^{\gamma}$, there holds 
\[
     \left\|\partial_ia_Lv^i_Kv^j_M\right\|_{L^{\infty}} \lesssim (KM)^{-\gamma}\|v\|_{C^{\gamma}_*}^2,
\]
and we can bound 
\begin{align*}
    \|A_{21}\|_{C_*^{2\gamma}} &\lesssim \sup_{N\geq 1}N^{2\gamma -1}\|v\|_{C^{\gamma}_*}^2 \sum_{\substack{M\sim N \\ K, L \ll M}} (KM)^{-\gamma}\lesssim \sup_{N\geq 1}N^{\gamma -1}\log(N)\|v\|_{C^{\gamma}_*}^2 \lesssim \|v\|_{C^{\gamma}}^2,
\end{align*}
because $\gamma - 1 < 0$.
\medskip

\noindent \textbf{Estimating $A_{22}$.} To estimate $A_{22}$, we use the  $C^{2\gamma -2}_* \to C_*^{2\gamma}$ continuity of $E_{-2}^{\sharp}\psi\in \operatorname{Op}(S_{1,\delta}^{-2})$,  together with $\|\partial_j a_{L}\|_{L^\infty} \lesssim \|\partial_j a\|_{L^{\infty}} \lesssim 1$ (see \cref{thm.bernstein}), so
\begin{align*}
    \|A_{22}\|_{C^{2\gamma}_{*}} &\lesssim \left\| \sum_{\substack{K \ll M \\ L \ll M}} \partial_ja_L v^i_K\partial_i v^j_M \right\|_{C^{2\gamma -2}_*} + \left\| \sum_{\substack{M \ll K \\ L \ll K}} \partial_ja_L v^i_K\partial_i v^j_M \right\|_{C^{2\gamma -2}_*}. 
\end{align*}
For the first term, we can write 
\begin{align*}
    \left\| \sum_{\substack{K \ll M \\ L \ll M}} \partial_ja_L v^i_K\partial_i v^j_M \right\|_{C^{2\gamma -2}_*} & \lesssim \sup_{N\geq 1} N^{2\gamma -2} \|v\|^2_{C_*^{\gamma}}\sum_{K,L\ll M \sim N} K^{-\gamma}M^{1-\gamma}\\
    & \lesssim \sup_{N\geq 1}N^{\gamma -1}\log(N) \|v\|^2_{C_*^{\gamma}} \lesssim \|v\|^2_{C_*^{\gamma}},
\end{align*}
because $\gamma - 1< 0$. To estimate the second term, we observe that
\begin{align*}
    \left\| \sum_{\substack{M \ll K \\ L \ll K}} \partial_ja_L v^i_K\partial_i v^j_M \right\|_{C^{2\gamma -2}_*} 
    & \lesssim \sup_{N\geq 1}N^{2\gamma -2}\|v\|^2_{C_*^{\gamma}} \sum_{  \substack{M \ll K\sim N \\ L \ll K} } \|\partial_ja_{L}\|_{L^{\infty}} K^{-\gamma}M^{1-\gamma} \\
    & \lesssim\sup_{N\geq 1}N^{-1}\log(N)\|v\|^2_{C_*^{\gamma}} \lesssim \|v\|^2_{C_*^{\gamma}}.
\end{align*}

\medskip

\noindent \textbf{Estimating $A_{23}$.} 
We start by using the  $C^{2\gamma-2}_* \to C_*^{2\gamma}$ continuity of $E_{-2}^{\sharp}\in \operatorname{Op}(S_{1,\delta}^{-2})$ (see \cref{th.calderonVaillancourt}).
Since again $K$ and $M$ play a symmetrical role, we also assume that $K\leq M$.
Hence, we can write
\[
    \|A_{23}\|_{C_*^{2\gamma}} \lesssim \left\|\sum_{\substack{K\ll M \\ L\ll M}} a_L\partial_jv^i_K\partial_iv^j_M\right\|_{C^{2\gamma -2}_*},
\]
Therefore, by taking into account the frequency localization, this gives
\begin{align*}
    \|A_{23}\|_{C_*^{2\gamma}} &\lesssim \sup_{N \geq 1} N^{2\gamma -2} \|v\|^2_{C^{\gamma}_*} \sum_{\substack{K \ll M \sim N\\ L \ll M}} L^{-1}K^{1-\gamma}M^{1-\gamma} \lesssim \|v\|^2_{C^{\gamma}_*},
\end{align*}
where we used that $1-\gamma > 0$. 

\subsection{Term B}\label{sec:termB_new} 
We start by estimating
\[
    \left\|E^\sharp_{-2}\left(\psi\partial_i\left( E^\flat_{1-\delta,i}(q)\right) \right)\right\|_{C^{2\gamma}_*}\lesssim \sum_{i=1}^d\left\|E^\flat_{1-\delta,i}(q) \right\|_{C^{2\gamma-1}_*}.
\]
By definition of $E^{\flat}_{1-\delta,i}$, we can write

\begin{align*}
    E_{1-\delta,i}^{\flat}(q) &= \sum_{K > M^{\delta}} g_{K}^{ij}\partial_{j} q_M =  \sum_{M^{\delta} < K \ll M} g_K^{ij} \partial_{j} q_M  +  \sum_{K \sim M} g_K^{ij} \partial_{j} q_M  +  \sum_{K \gg M} g_K^{ij} \partial_{j} q_M, 
\end{align*}
so that, by frequency localization of the above terms, there holds 
\[
    \mathbf{P}_N E_{1-\delta,i}^{\flat}(q)  =\sum_{\substack{M^{\delta} < K \ll M \\ M\sim N}} \mathbf{P}_N(g_{K}^{ij}\partial_{j} q_M) + \sum_{K \sim M \geq N} \mathbf{P}_N(g_{K}^{ij}\partial_{j} q_M) + \sum_{M \ll K \sim N} \mathbf{P}_N(g_{K}^{ij}\partial_{j} q_M).
\]
Thus, by using $\|q_M\|_{L^{\infty}} \lesssim M^{-2\gamma + \frac{\delta}{2}}\|q\|_{C_*^{2\gamma - \frac{\delta}{2}}}$ and $\|g^{ij}_K\|_{L^\infty}\lesssim K^{-1}$ (recall that the extended metric is Lipschitz), we obtain
\begin{align*}
    \|E_{1-\delta,i}^{\flat}(q)\|_{C_*^{2\gamma -1}} &\lesssim \sup_{N\geq 1} N^{2\gamma -1}\|q\|_{C_*^{2\gamma - \frac{\delta}{2}}} \left(\sum_{\substack{M^{\delta} < K \ll M \\ M\sim N}} + \sum_{\substack{K \sim M \geq N}} +  \sum_{\substack{M \ll K \sim N}} \right) K^{-1}M^{1-2\gamma+\frac{\delta}{2}} \\
    & \lesssim \sup_{N\geq 1} \left(N^{- \frac{\delta}{2}}  + N^{-1 + \frac{\delta}{2}} + N^{-1 + \frac{\delta}{2}}\right)\|q\|_{C_*^{2\gamma - \frac{\delta}{2}}} \lesssim \|q\|_{C_*^{2\gamma - \frac{\delta}{2}}}.
\end{align*}
Here we used $\displaystyle\sum_{M^{\delta} < K \ll M} K^{-1} \leq \sum_{K > M^{\delta}} K^{-1} \lesssim M^{-\delta}$ and, in the second summation, we also assumed $-2\gamma + \frac{\delta}{2} < 0$ (this can be clearly ensured by choosing $\delta>0$ sufficiently small).

\begin{remark} 
As mentioned in \cref{rem.flatPart}, $E^{\flat}_{1-\delta} \in \operatorname{Op}(C^1_*S^{1-\delta}_{1,\delta})$ by similar computations.
Therefore, the estimate of the term $B$ can be handled using a generalization of \cref{th.calderonVaillancourt} to non-smooth symbols, see~\cite{taylorIII}*{Chapter 13, Proposition 9.10}. 
However, we preferred to provide a proof adapted to our operator in order to keep the proof as self-contained as possible. 
\end{remark}

\section{Final comments and extensions}\label{rem.extensionsThm}

Let us now conclude our paper  by comparing it to the approach used in \cites{BT21,BBT23},  and also making some comments on possible extensions of our results.

\subsection{Comparison with \cite{BT21,BBT23}}\label{S:versus_BT} Let us consider the regularized equation \eqref{Neuman_problem_p_approx}.  In order to define a trace of the limiting (as $\varepsilon \rightarrow 0$) $\partial_n p$ at the boundary, the authors of \cites{BT21,BBT23} consider the modified pressure $P_\eps:=p^\eps + (u_\eps\cdot n)^2$ solving the problem 
\begin{equation}\label{BT_equation}
    \left\{
    \begin{array}{rcll}
        -\Delta P^\eps &=& \diver \diver (u_\eps\otimes u_\eps) + \Delta (u_\eps\cdot n)^2& \text{ in }\Omega\\[1ex]
        \partial_n P^\eps &=&u_\eps\otimes u_\eps : \nabla n & \text{ on } \partial \Omega. 
    \end{array}
    \right.
\end{equation}
They prove the uniform bounds $\|P^\eps\|_{C^\gamma(\Omega)}\leq C$ and $\|P^\eps\|_{H^{-2}(\partial \Omega)}\leq C$,   which give the existence of a uniform limit $P$ (up to subsequences), together with  the fact that such a limit has a well-defined $\partial_n P\in H^{-2}(\partial \Omega)$.  Since the sequence $(u_\eps \cdot n)^2$ stays bounded in $C^\gamma(\Omega)$, then it must hold $\|p^\eps\|_{C^\gamma(\Omega)}\leq C$ as well, from which we can find a uniform limit, say $\tilde p\in C^\gamma(\Omega)$.  By testing \eqref{BT_equation} with some $\varphi\in C^2(\overline \Omega)$, and since $u_\eps$ is $C^1(\overline \Omega)$ for all $\eps>0$,  we deduce that the couple $(u_\eps,p_\eps)$ solves \eqref{p_weaksol}. Passing to the limit, we deduce that also $(u,\tilde p)$ solves \eqref{p_weaksol}.  Since solutions to \eqref{p_weaksol} are unique up to constants, our \cref{thm.main} implies that $\tilde p=p\in C_*^{2\gamma}(\Omega)$, thus doubling the regularity of the pressure.  Clearly, since in general $(u\cdot n)^2$ is only $\gamma$-H\"older, double regularity does not transfer to $P$. 
However, our approach does not give any meaning to $\partial_n p\big|_{\partial \Omega}$, which is coherent with the fact that there exists a divergence-free vector field $u\in C^\gamma (\Omega)$, $\gamma<\frac12$, such that $\partial_n (u\cdot n)^2\not\in \mathcal D' (\partial \Omega)$, as it has been proved in \cite{BBT23}*{Section 8}. It is in fact one of the main features of~\cite{BT21,BBT23} to provide a meaning of $\partial_n p\big|_{\partial \Omega}$, and this shows the necessity of introducing a different boundary condition for the Neumann pressure problem to hope to interpret the boundary datum in a trace sense. However, our approach is to work directly with the formulation  \eqref{p_weaksol}, which does not requite $\partial_n p\big|_{\partial \Omega}$ to be well-defined. As already discussed in \cref{S:weaksol}, the formulation \eqref{p_weaksol} is indeed the natural one corresponding to the weak formulation of \eqref{E}.

\subsection{Rougher domains}
Essentially, the content of \cref{thm.main} is  the validity of the double regularity estimate~\eqref{p_contin_est} on $C^{2,1}$ domains. 
The pseudodifferential approach used in this article might achieve the same regularity on less regular domains. 
In the following discussion, we will forget about the fact that the local normal coordinate system used at the boundary requires $\partial \Omega\in C^{2,1}$. 

Let $\Omega$ be a $C^{1,\alpha}$ domain. Since $u \otimes u \in C^{\gamma}$ and $\nabla n \in C^{\alpha - 1}$, it is enough that $\gamma + \alpha - 1> 0$ to ensure the well-definedness of the boundary condition as a distribution (see \cref{lem.holderProduct}). Thus, in order to have $\partial_n p\in C^{2\gamma - 1}$, together with its compatibility with the boundary condition, we also need $2\gamma - 1 \leq \min\{\alpha - 1, \gamma\} = \alpha - 1$. The combination of these two conditions give the conjecture for the optimal domain regularity, see \cref{optimal_figure}. 

\begin{figure}
\includegraphics[width=0.5\textwidth]{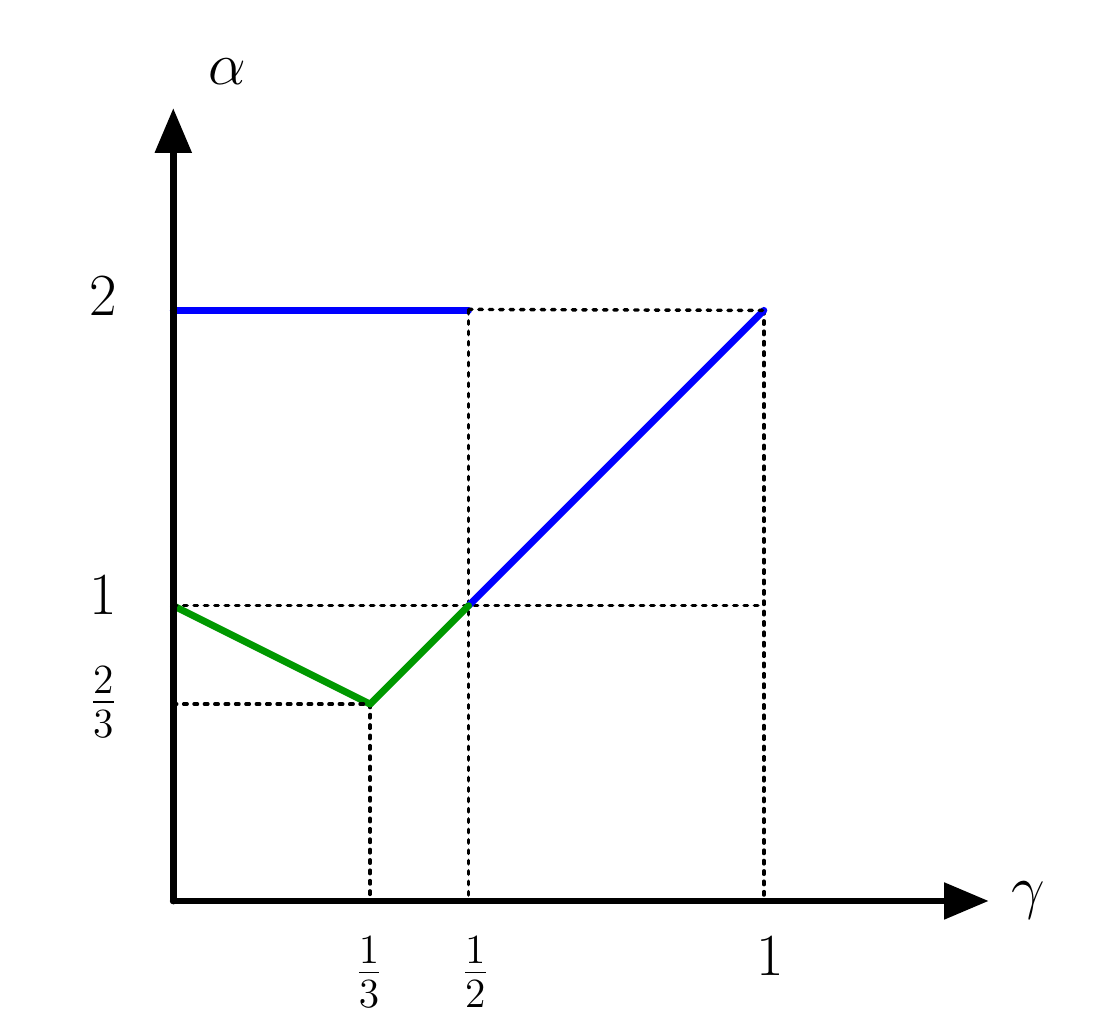}
\caption{Double H\"older regularity on $C^{1,\alpha}$ domains: blue lines represent what is proven in \cref{thm.main} for $\gamma \leq \frac{1}{2}$, as well as in~\cite{DLS22} when $\gamma > \frac{1}{2}$. Green lines represent the conjectured optimal domain regularity. Here we used the convention $C^{1,\alpha}=C^{2,\alpha-1}$, if $\alpha\in (1,2]$.}
\label{optimal_figure}
\end{figure}

We believe that the pseudodifferential part of the proof should still apply to this situation. 
Indeed, the regularity of $a \in C^{\alpha}$ in \cref{sec:termA_new} and in \cref{sec:termB_new} is enough for the argument. 
Thus, the main difficulty would be to extend the good approximation procedure of \cref{l:approx}.  
 
\subsection{Extension to Besov velocities} 
Another interesting extension of \cref{thm.main} would be to investigate the double regularity in Besov space, \textit{i.e.}, the validity of the estimate
\begin{equation}
\label{eq.besov}
\|p\|_{B^{2\gamma}_{r,\infty}} \lesssim \|u\|^2_{B^{\gamma}_{2r,\infty}},   
\end{equation}
for some values of $r\in[1,\infty)$, as $r=\infty$ is exactly the case of \cref{thm.main}. 
Indeed, from the pioneering work of A.~Kolmogorov \cite{K41}, the Besov classes seem the right setting in which to embed the local structure of turbulent flows. The estimate \eqref{eq.besov} has been indeed recently proved in \cite{CDF20} in the absence of the boundary, \textit{i.e.}, on $\T^d$ and on $\R^d$.
    
First, let us mention that, as in the previous observation, the pseudodifferential part will work similarly, with the slight difference that, instead of using continuity estimates of pseudodifferential operators in $C^s_*$ spaces, one should use the continuity estimates in Besov spaces, \textit{i.e.}, \cref{th.calderonVaillancourt} in Besov classes, which follows by interpolation between $L^r$  and $W^{s,r}$. 

Discarding the difficulty of approximation results such as \cref{l:approx}, a new problem arises, that is,  the traces on the boundary. Indeed, at a formal level, the trace of $u\in B^{\gamma}_{2r,\infty}$ at the boundary is only in $B^{\gamma - \frac{1}{2r}}_{2r,\infty}$.
Therefore, for smooth enough domains (say $C^{3}$, for example), and for $\gamma \geq \frac{1}{2r}$ (to let $u\otimes u$ be well defined), there holds $u \otimes u : \nabla n \in B^{\gamma - \frac{1}{2r}}_{r,\infty}$.
This is in fact compatible with $\partial_n p \in B^{2\gamma - 1 - \frac{1}{r}}_{r,\infty}$ when $\gamma \leq 1 + \frac{1}{2r}$, which is the case. 
So, for example, in the case $r=3$, which is the relevant case in the K41 theory for fully developed turbulence (see \cite{F95} for an extensive description), one should be able to obtain~\eqref{eq.besov} for all $\gamma \geq \frac{1}{6}$, thus including $\gamma=\frac13$.
This is in fact the exponent which plays a pivotal  role in turbulence theory relating to the Kolmogorov $\frac{4}{5}$-law: an exact result relating the energy dissipation of a solution to its third order (signed) structure function.

\color{black}

\begin{bibdiv}

\begin{biblist}

\bib{BCD}{book}{
    author={Bahouri, Hajer},
    author={Chemin, Jean-Yves},
    author={Danchin, Raphaël},
    title={Fourier Analysis and Nonlinear Partial Differential Equations},
    year={2011},
    series={Grundlehren der mathematischen Wissenschaften (343)},
    publisher={Springer Berlin, Heidelberg}, 
    doi={10.1007/978-3-642-16830-7},
}

\bib{BT18}{article}{
   author={Bardos, Claude},
   author={Titi, Edriss S.},
   title={Onsager's conjecture for the incompressible Euler equations in
   bounded domains},
   journal={Arch. Ration. Mech. Anal.},
   volume={228},
   date={2018},
   number={1},
   pages={197--207},
%   issn={0003-9527},
%   review={\MR{3749259}},
%   doi={10.1007/s00205-017-1189-x},
}

\bib{BBT23}{article}{
   author={Bardos, Claude},
   author={Boutros, Daniel W. },
   author={Titi, Edriss S.},
   title={Hölder regularity of the pressure for weak solutions of the 3D Euler equations in bounded domains},
      note={Preprint available at \href{https://arxiv.org/abs/2304.01952}{arXiv:2304.01952}},
   date={2023},
}

\bib{BT21}{article}{
   author={Bardos, Claude},
   author={Titi, Edriss S.},
   title={$C^{0,\alpha}$  boundary regularity for the pressure in weak solutions of the $2d$ Euler equations},
   journal={Phil. Trans. R. Soc. A.}, 
   volume={380},
   date={2022},
}

\bib{BTW19}{article}{
   author={Bardos, Claude},
   author={Titi, Edriss S.},
   author={Wiedemann, Emil},
   title={Onsager's conjecture with physical boundaries and an application
   to the vanishing viscosity limit},
   journal={Comm. Math. Phys.},
   volume={370},
   date={2019},
   number={1},
   pages={291--310},
%   issn={0010-3616},
%   review={\MR{3982696}},
%   doi={10.1007/s00220-019-03493-6},
}

\bib{B81}{article}{
   author={Bony, Jean-Michel},
   title={Calcul symbolique et propagation des singularit\'{e}s pour les
   \'{e}quations aux d\'{e}riv\'{e}es partielles non lin\'{e}aires},
   language={French},
   journal={Ann. Sci. \'{E}cole Norm. Sup. (4)},
   volume={14},
   date={1981},
   number={2},
   pages={209--246},
   issn={0012-9593},
   %review={\MR{631751}},
}

\bib{CD18}{article}{
   author={Colombo, Maria},
   author={De Rosa, Luigi},
   title={Regularity in time of H\"{o}lder solutions of Euler and
   hypodissipative Navier-Stokes equations},
   journal={SIAM J. Math. Anal.},
   volume={52},
   date={2020},
   number={1},
   pages={221--238},
%   issn={0036-1410},
%   review={\MR{4051979}},
%   doi={10.1137/19M1259900},
}

\bib{CDF20}{article}{
   author={Colombo, Maria},
   author={De Rosa, Luigi},
   author={Forcella, Luigi},
   title={Regularity results for rough solutions of the incompressible Euler
   equations via interpolation methods},
   journal={Nonlinearity},
   volume={33},
   date={2020},
   number={9},
   pages={4818--4836},
%   issn={0951-7715},
%   review={\MR{4135097}},
%   doi={10.1088/1361-6544/ab8fb5},
}

\bib{C2014}{article}{
   author={Constantin, P.},
   title={Local formulas for hydrodynamic pressure and their applications},
   language={Russian, with Russian summary},
   journal={Uspekhi Mat. Nauk},
   volume={69},
   date={2014},
   number={3(417)},
   pages={3--26},
%   issn={0042-1316},
   translation={
      journal={Russian Math. Surveys},
      volume={69},
      date={2014},
      number={3},
      pages={395--418},
%      issn={0036-0279},
   },
%   review={\MR{3287502}},
%   doi={10.1070/rm2014v069n03abeh004896},
}

\bib{DI22}{article}{
    author={De Rosa, Luigi},
   author={Isett, Philip},
   title={Intermittency and lower dimensional dissipation in incompressible fluids: quantifying Landau},
   note={Preprint available at \href{https://arxiv.org/abs/2212.08176}{arXiv:2212.08176}},
   date={2022},
}

\bib{DLS22}{article}{
   author={De Rosa, Luigi},
   author={Latocca, Micka\"{e}l},
   author={Stefani, Giorgio},
   title={On Double H\"{o}lder regularity of the hydrodynamic pressure in
   bounded domains},
   journal={Calc. Var. Partial Differential Equations},
   volume={62},
   date={2023},
   number={3},
   pages={Paper No. 85},
}

\bib{FR23}{book}{
    author={Fernandez-Real, Xavier},
    author={Ros-Oton, Xavier}, 
    title={Regularity Theory for Elliptic PDE},
    year={2023},
    series={Zurich Lectures in Advanced Mathematics},
    publisher={European Mathematical Society}
}

\bib{F95}{book}{
   author={Frisch, Uriel},
   title={Turbulence},
%   note={The legacy of A. N. Kolmogorov},
   publisher={Cambridge University Press, Cambridge},
   date={1995},
   % pages={xiv+296},
   % isbn={0-521-45103-5},
%   review={\MR{1428905}},
}

\bib{GT}{book}{
   author={Gilbarg, David},
   author={Trudinger, Neil S.},
   title={Elliptic partial differential equations of second order},
   series={Classics in Mathematics},
   note={Reprint of the 1998 edition},
   publisher={Springer-Verlag, Berlin},
   date={2001},
%   pages={xiv+517},
%   isbn={3-540-41160-7},
%   review={\MR{1814364}},
}

\bib{GIP15}{article}{
   author={Gubinelli, Massimiliano},
   author={Imkeller, Peter},
   author={Perkowski, Nicolas},
   title={Paracontrolled distributions and singular PDEs},
   journal={Forum Math. Pi},
   volume={3},
   date={2015},
   pages={e6, 75},
%   review={\MR{3406823}},
 %  doi={10.1017/fmp.2015.2},
}

\bib{Is2013}{article}{
   author={Isett, Philip},
   title={Regularity in time along the coarse scale flow for the incompressible Euler equations},
   note={Preprint available at \href{https://arxiv.org/abs/1307.0565}{arXiv:1307.0565}},
   journal={Trans. Amer. Math. Soc.},
   status={to appear},
   date={2023},
}

\bib{KMPT00}{article}{
   author={Kato, Tosio},
   author={Mitrea, Marius},
   author={Ponce, Gustavo},
   author={Taylor, Michael},
   title={Extension and representation of divergence-free vector fields on
   bounded domains},
   journal={Math. Res. Lett.},
   volume={7},
   date={2000},
   number={5-6},
   pages={643--650},
%   issn={1073-2780},
%   review={\MR{1809290}},
%   doi={10.4310/MRL.2000.v7.n5.a10},
}

\bib{K41}{article}{
   author={Kolmogoroff, A.},
   title={The local structure of turbulence in incompressible viscous fluid
   for very large Reynold's numbers},
   journal={C. R. (Doklady) Acad. Sci. URSS (N.S.)},
   volume={30},
   date={1941},
   pages={301--305},
%   review={\MR{0004146}},
}

\bib{RRS18}{article}{
   author={Robinson, James C.},
   author={Rodrigo, Jos\'e L.},
   author={Skipper, Jack W. D.},
   title={Energy conservation for the Euler equations on $\mathbb{T}^2\times\mathbb{R}_+$ for weak solutions defined without reference to the pressure},
   journal={Asymptot. Anal.},
   volume={110},
   date={2018},
   number={3-4},
   pages={185--202},
%   issn={0921-7134},
%   review={\MR{3885495}},
%   doi={10.3233/asy-181482},
}

\bib{RRS2018}{article}{
   author={Robinson, James C.},
   author={Rodrigo, Jos\'{e} L.},
   author={Skipper, Jack W. D.},
   title={Energy conservation in the 3D Euler equation on $\mathbb
   T^2\times\mathbb R_+$},
   conference={
      title={Partial differential equations in fluid mechanics},
   },
   book={
      series={London Math. Soc. Lecture Note Ser.},
      volume={452},
      publisher={Cambridge Univ. Press, Cambridge},
   },
   date={2018},
   pages={224--251},
%   review={\MR{3838052}},
}

\bib{SILV}{article}{
   author={Silvestre, Luis},
title={A non obvious estimate for the pressure},
 status={unpublished note},
 eprint={http://math.uchicago.edu/~luis/preprints/pressureestimate.pdf},
year={2011},
}

\bib{tao}{book}{
    author={Tao, Terence},
    title={Nonlinear Dispersive Equations: Local and Global Analysis},
    year={2006},
    series={CBMS Regional Conference Series in Mathematics (106)},
    publisher={American Mathematical Society}
}

\bib{taylor}{book}{
    author={Taylor, Michael E.},
    title={Pseudodifferential Operators and Nonlinear PDE},
    year={1991},
    series={Progress in Mathematics (100)},
    publisher={Birkhäuser Boston, MA}
}

\bib{taylorII}{book}{
  title={Partial Differential Equations II: Qualitative Studies of Linear Equations},
  author={Taylor, Michael E.},
  isbn={9781441970527},
  series={Applied Mathematical Sciences},
  year={2010},
  publisher={Springer New York}
}

\bib{taylorIII}{book}{
   author={Taylor, Michael E.},
   title={Partial differential equations III. Nonlinear equations},
   series={Applied Mathematical Sciences},
   volume={117},
   edition={2},
   publisher={Springer, New York},
   date={2011},
   pages={xxii+715},
   isbn={978-1-4419-7048-0},
   %review={\MR{2744149}},
  % doi={10.1007/978-1-4419-7049-7},
}

\bib{V22}{article}{
   author={Vita, Stefano},
   title={Boundary regularity estimates in H\"{o}lder spaces with variable
   exponent},
   journal={Calc. Var. Partial Differential Equations},
   volume={61},
   date={2022},
   number={5},
   pages={Paper No. 166, 31},
%   issn={0944-2669},
%   review={\MR{4444179}},
%   doi={10.1007/s00526-022-02274-9},
}

\end{biblist}
\end{bibdiv}

\end{document}